\numberwithin{equation}{section}
\newcommand{\N}{\mathbb{N}}
\newcommand{\R}{\mathbb{R}}
\newcommand{\C}{{\rm C}}
\newcommand{\AC}{{\rm AC}}
\renewcommand{\S}{{\rm S}}
\newcommand{\D}{{\rm D}}
\newcommand{\W}{{\rm W}}
\newcommand{\sfd}{{\sf d}}
\renewcommand{\d}{{\mathrm d}}
\newcommand{\X}{{\rm X}}
\newcommand{\Y}{{\rm Y}}
\newcommand{\mm}{\mathfrak{m}}
\newcommand{\1}{\mathbbm 1}
\newcommand{\Mod}{{\rm Mod}}
\newcommand{\UG}{{\rm UG}}
\newcommand{\LIP}{{\rm LIP}}
\newcommand{\loc}{{\rm loc}}
\newcommand{\fr}{\penalty-20\null\hfill\(\blacksquare\)}
\newtheorem{theorem}{Theorem}[section]
\newtheorem{corollary}[theorem]{Corollary}
\newtheorem{lemma}[theorem]{Lemma}
\newtheorem{proposition}[theorem]{Proposition}
\newtheorem{definition}[theorem]{Definition}
\newtheorem{remark}[theorem]{Remark}
\title{Pullback of a quasiconformal map between arbitrary metric measure spaces}
\author[Toni Ikonen]{Toni Ikonen}
\address[Toni Ikonen]{University of Jyvaskyla, Department of Mathematics and Statistics, P.O.\ Box 35 (MaD), FI-40014
University of Jyvaskyla, Finland}
\email{toni.m.h.ikonen@jyu.fi}
\author[Danka Lu\v{c}i\'{c}]{Danka Lu\v{c}i\'{c}}
\address[Danka Lu\v{c}i\'{c}]{Universit\`{a} di Pisa,
Dipartimento di Matematica, Largo Bruno Pontecorvo 5,
56127 Pisa, Italy}
\email{danka.lucic@dm.unipi.it}
\author[Enrico Pasqualetto]{Enrico Pasqualetto}
\address[Enrico Pasqualetto]{Scuola Normale Superiore, Piazza dei
Cavalieri, 7, 56126 Pisa, Italy.}
\email{enrico.pasqualetto@sns.it}
\date{\today}
\keywords{Quasiconformal, pullback, cotangent module}
\subjclass[2020]{30L10, 53C23, 46E35}
\begin{document}
\begin{abstract}
We prove that every (geometrically) quasiconformal homeomorphism between
metric measure spaces induces an isomorphism between
the cotangent modules constructed by Gigli. We obtain this by first
showing that every continuous mapping \(\varphi\) with bounded outer
dilatation induces a pullback map \(\varphi^*\) between
the cotangent modules of Gigli, and then proving the functorial nature of the
resulting pullback operator. Such pullback is
consistent with the differential for metric-valued locally Sobolev
maps introduced by Gigli--Pasqualetto--Soultanis.

Using the consistency between Gigli's and Cheeger's cotangent modules for PI spaces, we prove that quasiconformal homeomorphisms between PI spaces preserve the dimension of Cheeger charts, thereby generalizing earlier work by Heinonen--Koskela--Shanmugalingam--Tyson.

Finally, we show that if $\varphi$ is a given homeomorphism with bounded outer dilatation, then $\varphi^{-1}$ has bounded outer dilatation if and only if $\varphi^{*}$ is invertible and $\varphi^{-1}$ is Sobolev. In contrast to the setting of Euclidean spaces, Carnot groups, or more generally, Ahlfors regular PI spaces, the Sobolev regularity of $\varphi^{-1}$ needs to be assumed separately.
\end{abstract}
\maketitle
\section{Introduction}
The prime objective of this paper is to fit the metric theory of
quasiconformal maps initiated by Heinonen--Koskela \cite{Hei:Kos:98}
into the framework of the differential calculus on metric measure
spaces introduced by Gigli \cite{Gig:18}. Before stating our main
results, let us remind some terminology.
\medskip

Let \((\X,\sfd_\X,\mm_\X)\), \((\Y,\sfd_\Y,\mm_\Y)\) be
metric measure spaces and fix $p \in (1,\infty)$. We say that a continuous map $\varphi\colon\X\to\Y$
has \emph{bounded outer dilatation} if there exists a constant $K > 0$ such that $\Mod_{p}( \Gamma ) \leq K\Mod_{p}( \varphi\Gamma )$ for all path 
families $\Gamma$; here $\Mod_{p}$ is the conformal $p$-modulus, see \eqref{eq:Mod}. When $\varphi$ is a homeomorphism, it was observed by Williams \cite[Theorem 1.1]{Wil:12} that the aforementioned modulus inequality is equivalent to requiring that $\varphi$ is locally Sobolev, i.e., $\varphi \in \D^{1,p}_\loc(\X,\Y)$, and satisfies the key inequality
\begin{equation}\label{eq:essentialinequality}
	 \int_{ \varphi^{-1}(E) } \rho_\varphi^p \,\d\mm_{\X} \leq K \mm_{\Y}( E ) \quad\text{for every Borel $E \subset \Y$},
\end{equation}
where \(\rho_\varphi\) stands for the \emph{minimal weak upper gradient} of $\varphi$. Notice that the above definitions depend on the fixed exponent $p$. By modifying Williams' proof, it was recently observed in \cite{Nta:Rom:21} that the equivalence between the modulus inequality and \eqref{eq:essentialinequality} also holds in the case $p = 2$ for all continuous mappings between metric surfaces, see \cite{Nta:Rom:21} for the relevant definitions. We observe here that the argument fully generalizes to the metric measure space setting for all $1 < p < \infty$, see Proposition \ref{lemm:energydistortion}.

There are several classes of important non-injective mappings of bounded outer dilatation. For example, one might consider $z \mapsto z^{2}$ in the complex plane (or other $N$-to-$1$ quasiregular mappings, see the monograph \cite{Ric:93}), folding maps $
\mathbb{R}^{n} \times \mathbb{R} \colon (x,y) \mapsto (x,|y|)$ (or other mappings of bounded deformation, see \cite{Gig:18,Gig:Pas:Sou:20}), or monotone mappings in 
some metric applications; see \cite{Lyt:Wen:17,Iko:Rom:ar,Mei:Wen:21,Nta:Rom:21,Iko:21} for some recent work. Many of our results apply to all continuous mappings of bounded outer 
dilatation. We also mention that an orientation-preserving homeomorphism $f \colon ( \R^n, \| \cdot \|, \mathcal{L}^n ) \to ( \R^n, \| \cdot \|, \mathcal{L}^n )$ is said to be a mapping with \emph{locally integrable distortion (dilatation)} if there exists a locally integrable weight $\omega \colon \R^n \to \left[1,\infty\right]$ such that the inverse $\varphi = f^{-1}$ has bounded outer dilatation as a mapping from $( \R^n, \| \cdot \|, \mathcal{L}^{n} )$ to $( \R^n, \| \cdot \|, \omega \mathcal{L}^n )$, for $p = n$. We refer the interested reader to \cite{Ast:Iwa:Mar:09,Ma:Vl:Sr:Ya:09}, and references therein, for further reading about mappings with locally integrable distortion.
\medskip

Another important topic in Metric Geometry is the differential
calculus recently developed by Gigli in \cite{Gig:18}. One of
the goals of this work was to show that a Sobolev function
\(f\in\W^{1,p}(\X)\), albeit a priori characterised only in
terms of weak upper gradients (which act as the absolute value
of the differential), actually comes with a linear notion
of differential in a rather natural way. The relevant concept
is that of \emph{cotangent module}, whose construction makes
sense in the full generality of arbitrary metric measure spaces,
even when an underlying linear structure is missing.
The cotangent module \(L^p(T^*\X)\) associated with a given metric
measure space \((\X,\sfd_\X,\mm_\X)\) can be described as the
completion of the \(L^\infty\)-linear combinations of the
`formal' differentials of Sobolev functions \(f \colon \X \to \R\), which are required to
obey the expected calculus rules. Technically speaking, the
structure one has to use in order to define \(L^p(T^*\X)\) is
the notion of \emph{\(L^p\)-normed \(L^\infty\)-module} over \(\X\).
An \(L^p\)-normed \(L^\infty\)-module is a Banach space that can be
`localised on Borel subsets of \(\X\)', in a suitable sense.
The \emph{differential} \(\d\colon\W^{1,p}(\X)\to L^p(T^*\X)\)
is then a linear map satisfying \(|\d f|=\rho_f\) for every \(f\).
\medskip

The core of this paper is Subsection \ref{ss:pullback} (more
specifically, Theorem \ref{thm:pullback_map}), where we prove
that every continuous map \(\varphi\colon\X\to\Y\) with bounded outer
dilatation induces a pullback operator at the
level of cotangent modules. Namely, there is a unique linear
and continuous map \(\varphi^*\colon L^p(T^*\Y)\to L^p(T^*\X)\)
such that \(\varphi^*(h\,\d f)=h\circ\varphi\,\d(f\circ\varphi)\)
whenever \(f\colon\Y\to\R\) is Sobolev and \(h\colon\X\to\R\)
is bounded Borel. Some care needs to be taken in formulating such an identity since the resulting operator must depend
only on the equivalence classes of \(f\) and \(h\) up to
\(\mm_\Y\)-a.e.\ equality. Heuristically, the reason why \(\varphi^*\)
is well-defined is due to the nice composition properties that are
enjoyed by \(\varphi\): as we shall see later, it holds that if \(f\colon\Y\to\R\) is
Sobolev, then \(f\circ\varphi\colon\X\to\R\) is Sobolev as well
and \(\rho_{f\circ\varphi}\leq\rho_f\circ\varphi\,\rho_\varphi\) $\mm_{\X}$-almost everywhere.
A first important consequence of this phenomenon is the following:
given two Sobolev functions \(f,g\colon\Y\to\R\) which agree
\(\mm_\Y\)-almost everywhere, we have that \(\d(f\circ\varphi)\)
and \(\d(g\circ\varphi)\) coincide; the differentials 
$\d( f \circ \varphi )$ and $\d( g \circ \varphi )$ degenerate on a Borel subset 
containing $\left\{ f \circ \varphi \neq g \circ \varphi \right\}$. In view of this 
observation, the definition \(\varphi^*(\d f)\coloneqq\d(f\circ\varphi)\)
is well-posed. Furthermore, the key bound \(\rho_{f\circ\varphi}\leq
\rho_f\circ\varphi\,\rho_\varphi\) entails the validity of the
inequality \(|\varphi^*\omega|\leq\rho_\varphi\,|\omega|\circ\varphi\)
for all \(\omega\in L^p(T^*\Y)\) and thus the continuity of
\(\varphi^*\). 

The motivation behind our construction of the pullback operator induced
by a quasiconformal map comes from \cite[Theorem 10.8]{HKST:01},
where the authors built a notion of pullback operator for
quasiconformal maps between Ahlfors regular spaces supporting
a weak Poincar\'{e} inequality; in their approach, differentials of
Sobolev functions are intended in the sense of Cheeger \cite{Ch:99}.
By virtue of the consistency between Cheeger's differential and
Gigli's differential (cf.\ \cite[Section 2.5]{Gig:18}), our main
Theorem \ref{thm:pullback_map} can be de facto regarded as a
generalisation of \cite[Theorem 10.8]{HKST:01}.

The aforementioned consistency between Cheeger's and Gigli's differential 
and Theorem \ref{thm:qc_vs_dimension} shows the following: quasiconformal homeomorphisms 
between PI spaces preserve dimensions of Cheeger charts. This means that if $(U, L)$ 
and $( \varphi(U), L' )$ are Cheeger charts of a Borel set $U \subset \X$ and $\varphi(U)$, 
respectively, then the images of $L$ and $L'$ have the same dimension. In fact, similar 
results hold for the Sobolev charts recently introduced in \cite{EB:S:21}. In Section 
\ref{sec:consistency}, we show that for any given continuous mapping of bounded outer 
dilatation, the pullback $\varphi^{*}$ is consistent with a related pullback construction 
from \cite[Definition 3.4]{Gig:Pas:Sou:20}.

\medskip

In Subsections \ref{ss:nec_QC} and \ref{ss:suf_QC} we employ
the pullback operator to provide necessary and sufficient conditions
for quasiconformality. About the former, it is easy to show that if the map \(\varphi\) is a quasiconformal homeomorphism, 
i.e., if $\varphi$ and its inverse have bounded outer dilatation,
then the associated pullback operator \(\varphi^*\) is an isomorphism
of Banach spaces (see Corollary \ref{cor:pullback_QC}). Consequently, in Theorem
\ref{thm:qc_vs_refl} we point out that if two metric measure
spaces \(\X\) and \(\Y\) are quasiconformally equivalent, then
\(L^p(T^*\X)\) is reflexive if and only if \(L^p(T^*\Y)\) is
reflexive. We recall that if $L^{p}( T^{*}\X )$ is reflexive, 
also the Sobolev space ${\rm W}^{1,p}(\X)$ is reflexive.
Concerning the sufficient conditions, we prove in Theorem
\ref{thm:equiv_QC} that a map \(\varphi\) with bounded outer
dilatation is automatically quasiconformal as soon as the
pullback operator \(\varphi^*\) is invertible and
\(f\circ\varphi^{-1}\) is Sobolev for every \(f\colon\X\to\R\)
boundedly-supported Lipschitz. This second condition cannot be disposed of, as we show in Remark \ref{rem:failure}.
\medskip

\textbf{Acknowledgements.}
The authors would like to thank Tapio Rajala for having suggested the problem, as well as Kai Rajala and Pekka Koskela for helpful
comments on a preliminary version of the paper.

The first named author was supported by the Academy of Finland, project number 308659, and 
by the Vilho, Yrjö and Kalle Väisälä Foundation. The second and third named authors were 
supported by the Academy of Finland (project number 314789).
The second named author was also supported by the project
2017TEXA3H ``Gradient flows, Optimal Transport and Metric
Measure Structures", funded by the Italian Ministry of
Research and University. The third named author was also
supported by the Balzan project led by Luigi Ambrosio.
\section{Preliminaries}\label{sec:prelim}
In this section we gather all the terminology and results
we will need in the main Section \ref{sec:main}. The material
contained in Subsections \ref{ss:curves_Mod}, \ref{ss:Sobolev},
and \ref{ss:QC} is mostly taken from \cite{HKST:15} (see also
\cite{Bj:Bj:11}), while Subsection \ref{ss:cotg_mod} is taken
from \cite{Gig:18}.
\subsection{Curves and modulus}\label{ss:curves_Mod}
In this paper, by a \emph{metric measure space} we mean a triple
\((\X,\sfd,\mm)\), where \((\X,\sfd)\) is a complete and separable
metric space, while \(\mm\geq 0\) is a boundedly finite Borel measure
on \((\X,\sfd)\).
We denote by \(\mathscr B(\X)\) the Borel \(\sigma\)-algebra of \(\X\).
By a \emph{curve} in \(\X\) we mean a continuous map
\(\gamma\colon I_\gamma\to\X\), where \(I_\gamma\subset\R\) is a
compact interval. We denote by \(\C(\X)\) the family of all curves
in \(\X\). We say that a curve \(\gamma\) is \emph{rectifiable} if there exists $L > 0$ such that for each increasing sequence $( t_i )_{ i = 1 }^{ n }$, with $t_0$ and $t_n$ being the end points of $I_{\gamma}$, we have
\begin{equation*}
	\sum_{ i = 1 }^{ n } \sfd( \gamma( t_{i-1} ), \gamma( t_i ) )
	\leq
	L.
\end{equation*}
The smallest constant $L$ with this property is the \emph{length} $\ell( \gamma )$ of $\gamma$. For a rectifiable curve, the \emph{metric speed} \(|\gamma'|\) is the limit
\[
|\gamma'|(t)=\lim_{s\to t}\frac{\sfd(\gamma(t),\gamma(s))}{|t-s|},
\quad\text{ for a.e.\ }t\in I_\gamma.
\]
The limit is well-defined almost everywhere; see \cite{Dud:07}.

We say that a curve \(\gamma\) in \(\X\) is
\emph{absolutely continuous} provided there exists a function
\(g\in L^1(I_\gamma)\) such that \(\sfd(\gamma(t),\gamma(s))
\leq\int_s^t g(r)\,\d r\) holds for every \(s,t\in I_\gamma\)
with \(s<t\). Such a curve is rectifiable and the smalllest function $g$ coincides with \( |\gamma'| \). We denote by \(\AC(\X)\) the family of all absolutely
continuous curves in \(\X\).

We recall that for every rectifiable curve $\gamma$ in $\X$, there exists an absolutely continuous curve $\widetilde{\gamma} \colon \left[0, \ell(\gamma)\right] \to \X$ and a surjective non-decreasing function $\psi \colon I_\gamma \to \left[0, \ell( \gamma )\right]$ for which $\gamma = \widetilde{\gamma} \circ \psi$, where $| \widetilde{\gamma}' |( s ) = 1$ for almost every $s$ \cite{HKST:15}. We refer to $\widetilde{\gamma}$ as the \emph{unit speed parametrization} of $\gamma$. For each Borel $\rho \colon \X \to \left[0,\infty\right]$, the \emph{path integral} of $\rho$ over $\gamma$ is defined by
\begin{equation*}
	\int_{ \gamma } \rho \,\d s \coloneqq \int_{ 0 }^{ \ell(\gamma) } \rho( \widetilde{\gamma}(t) ) \,\d t.
\end{equation*}
When $\gamma$ is absolutely continuous, the equality
\begin{equation*}
	\int_{ \gamma } \rho \,\d s
	=
	\int_{ I_{ \gamma } }
		\rho( \gamma(t) ) | \gamma' |(t)
	\,\d t
\end{equation*}
holds; see, for example, \cite{Dud:07} for a proof.
\smallskip

Later on, we will need another characterisation of absolutely
continuous curves. Before passing to its statement, let us introduce
a useful class of auxiliary functions: given a metric space
\((\X,\sfd)\), a point \(\bar x\in\X\), and a radius \(r>0\),
we define the function \(\eta_{\bar x,r}\colon\X\to[0,2r]\) as
\begin{equation}\label{eq:def_good_cut-off}
\eta_{\bar x,r}(x)\coloneqq\left\{\begin{array}{lll}
\sfd(x,\bar x),\\
2r-\sfd(x,\bar x),\\
0,
\end{array}\quad\begin{array}{lll}
\text{ if }x\in B_r(\bar x),\\
\text{ if }x\in B_{2r}(\bar x)\setminus B_r(\bar x),\\
\text{ if }x\in\X\setminus B_{2r}(\bar x).
\end{array}\right.
\end{equation}
The proof of the following criterion is essentially
contained in the proof of \cite[Theorem 1.1.2]{Amb:Gig:Sav:08}.
\begin{lemma}\label{lem:equiv_ms}
Let \((\X,\sfd)\) be a complete, separable metric space.
Let \(\gamma\colon[a,b]\to\X\) be a curve. Fix a dense
sequence \((x_n)_n\subset\X\) and denote \(\eta_{n,k}\coloneqq
\eta_{x_n,k}\) for all \(n,k\in\N\), where \(\eta_{x_n,k}\)
is given by \eqref{eq:def_good_cut-off}. Suppose
\(\eta_{n,k}\circ\gamma\colon[a,b]\to\R\) is absolutely continuous
for every \(n,k\in\N\) and \(g\in L^1(a,b)\), where
\[
g(t)\coloneqq\sup_{n,k\in\N}(\eta_{n,k}\circ\gamma)'(t),
\quad\text{ for a.e.\ }t\in[a,b].
\]
Then \(\gamma\) is absolutely continuous and the identity
\(|\gamma'|(t)=g(t)\) holds for a.e.\ \(t\in[a,b]\).
\end{lemma}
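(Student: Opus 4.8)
The plan is to reduce the statement to two elementary properties of the cut-off functions in \eqref{eq:def_good_cut-off}. First, each \(\eta_{n,k}=\eta_{x_n,k}\) is \(1\)-Lipschitz: one readily checks that \(\eta_{x_n,k}=\max\bigl\{0,\min\{\sfd(\cdot,x_n),\,2k-\sfd(\cdot,x_n)\}\bigr\}\), and minima and maxima of \(1\)-Lipschitz functions are \(1\)-Lipschitz. Second, and this is the crucial point, I claim the pointwise identity
\begin{equation}\label{eq:rep-sketch}
	\sfd(x,y)=\sup_{n,k\in\N}\bigl(\eta_{n,k}(x)-\eta_{n,k}(y)\bigr)\qquad\text{for all }x,y\in\X.
\end{equation}
The bound ``\(\le\)'' holds term-by-term by \(1\)-Lipschitzianity. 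For ``\(\ge\)'', fix \(\varepsilon>0\); using the density of \((x_n)_n\) pick \(x_n\) with \(\sfd(x_n,y)<\varepsilon\), and then pick \(k\in\N\) so large that \(x\) and \(y\) both lie in \(B_k(x_n)\). For such a choice \(\eta_{n,k}\) agrees with \(\sfd(\cdot,x_n)\) at \(x\) and \(y\), so \(\eta_{n,k}(x)-\eta_{n,k}(y)=\sfd(x,x_n)-\sfd(y,x_n)\ge\sfd(x,y)-2\varepsilon\) by the triangle inequality; letting \(\varepsilon\downarrow 0\) proves \eqref{eq:rep-sketch}. (The two-sided identity \(\sfd(x,y)=\sup_{n,k}|\eta_{n,k}(x)-\eta_{n,k}(y)|\) follows as well, but it is the one-sided form \eqref{eq:rep-sketch} that will make the absolute values disappear from the final formula.)

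Next I would show that \(\gamma\in\AC(\X)\). Since every \(\eta_{n,k}\circ\gamma\) is absolutely continuous on \([a,b]\), the fundamental theorem of calculus combined with \eqref{eq:rep-sketch} (applied with \(x=\gamma(t)\), \(y=\gamma(s)\)) gives, for \(a\le s<t\le b\),
\[
	\sfd\bigl(\gamma(t),\gamma(s)\bigr)=\sup_{n,k}\int_s^t(\eta_{n,k}\circ\gamma)'(r)\,\d r\le\int_s^t g(r)\,\d r,
\]
where the inequality merely uses \((\eta_{n,k}\circ\gamma)'\le g\) a.e.\ for each \(n,k\) together with \(g\in L^1(a,b)\). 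As the left-hand side is nonnegative, \(\int_s^t g\ge 0\) for all \(s<t\), whence \(g\ge 0\) a.e.; thus \(g\) is an admissible function in the definition of absolute continuity, so \(\gamma\in\AC(\X)\), and by the minimality of the metric speed recalled in Subsection \ref{ss:curves_Mod} we obtain \(|\gamma'|\le g\) a.e.\ on \([a,b]\).

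It remains to prove \(g\le|\gamma'|\) a.e. Since \(\gamma\in\AC(\X)\), the limit \(|\gamma'|(t)=\lim_{s\to t}\sfd(\gamma(s),\gamma(t))/|s-t|\) exists for a.e.\ \(t\). Let \(t\) be such a point which is moreover a differentiability point of every \(\eta_{n,k}\circ\gamma\); the set of such \(t\) is co-null, being the intersection of countably many co-null sets. From \(|\eta_{n,k}(\gamma(s))-\eta_{n,k}(\gamma(t))|\le\sfd(\gamma(s),\gamma(t))\), dividing by \(|s-t|\) and letting \(s\to t\), we get \((\eta_{n,k}\circ\gamma)'(t)\le|(\eta_{n,k}\circ\gamma)'(t)|\le|\gamma'|(t)\) for every \(n,k\), and taking the supremum yields \(g(t)\le|\gamma'|(t)\). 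Together with the previous paragraph this gives \(g=|\gamma'|\) a.e., as claimed. The only genuinely delicate step is \eqref{eq:rep-sketch}: one must choose the centre \(x_n\) near \(y\) rather than near \(x\) to obtain the correct sign, and take the radius \(k\) large enough to remain in the region where \(\eta_{n,k}\) coincides with \(\sfd(\cdot,x_n)\); everything else is a routine combination of the one-dimensional theory of absolutely continuous functions with the minimality property of \(|\gamma'|\).
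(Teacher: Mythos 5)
Your proof is correct and follows essentially the same route as the paper: the same key identity \(\sfd(\gamma(t),\gamma(s))=\sup_{n,k}\bigl(\eta_{n,k}(\gamma(t))-\eta_{n,k}(\gamma(s))\bigr)\) (proved by the same choice of centre near the subtracted point and large radius), the same use of the fundamental theorem of calculus for each \(\eta_{n,k}\circ\gamma\) to deduce \(\gamma\in\AC(\X)\) with \(|\gamma'|\le g\). The only cosmetic difference is that you obtain \(g\le|\gamma'|\) a.e.\ via pointwise difference quotients at common differentiability points instead of the paper's Lebesgue differentiation of integral averages, which is an equivalent argument.
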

\begin{proof}
First of all, we claim that for any \(s,t\in[a,b]\) with \(s<t\)
it holds that
\begin{equation}\label{eq:equiv_ms}
\sfd(\gamma(t),\gamma(s))=\sup_{n,k\in\N}
\big(\eta_{n,k}(\gamma(t))-\eta_{n,k}(\gamma(s))\big).
\end{equation}
The inequality \(\geq\) follows from the fact that each function
\(\eta_{n,k}\) is \(1\)-Lipschitz. To prove the converse inequality,
fix any \(\varepsilon>0\). Choose \(n,k\in\N\) such that
\(\sfd(\gamma(s),x_n)<\varepsilon\) and \(\sfd(\gamma(t),\gamma(s))
+\varepsilon<k\). Then
\[
\eta_{n,k}(\gamma(t))-\eta_{n,k}(\gamma(s))=
\sfd(\gamma(t),x_n)-\sfd(\gamma(s),x_n)\geq
\sfd(\gamma(t),\gamma(s))-2\,\sfd(\gamma(s),x_n)\geq
\sfd(\gamma(t),\gamma(s))-2\varepsilon,
\]
whence it follows (thanks to the arbitrariness of \(\varepsilon>0\))
that the inequality \(\leq\) in \eqref{eq:equiv_ms} is verified.
The absolute continuity of \(\eta_{n,k}\circ\gamma\) ensures
that \(\eta_{n,k}(\gamma(t))-\eta_{n,k}(\gamma(s))
=\int_s^t(\eta_{n,k}\circ\gamma)'(r)\,\d r\), thus
\[
\int_s^t(\eta_{n,k}\circ\gamma)'(r)\,\d r\leq
\sfd(\gamma(t),\gamma(s))\overset{\eqref{eq:equiv_ms}}\leq
\int_s^t g(r)\,\d r,\quad\text{ for every }n,k\in\N\text{ and }
s,t\in[a,b],\,s<t.
\]
Given that \(g\in L^1(a,b)\) by assumption, we deduce that
\(\gamma\) is absolutely continuous. Moreover, an application
of the Lebesgue differentiation theorem yields
\begin{align*}
(\eta_{n,k}\circ\gamma)'(t)&=\lim_{s\nearrow t}\fint_s^t
(\eta_{n,k}\circ\gamma)'(r)\,\d r\leq\lim_{s\nearrow t}\frac{\sfd(\gamma(t),\gamma(s))}{t-s}=|\gamma'|(t)
\\
&\leq\lim_{s\nearrow t}\fint_s^t g(r)\,\d r=g(t),\quad
\text{ for a.e.\ }t\in[a,b].
\end{align*}
By taking the supremum over \(n,k\in\N\), we finally conclude
that \(g(t)=|\gamma'|(t)\) for a.e.\ \(t\in[a,b]\).
\end{proof}
Given a curve family \( \Gamma \subset \C( \X ) \), a function $\rho \colon \X \rightarrow \left[0,\infty\right]$ is \emph{admissible} for $\Gamma$ if $\rho$ is Borel and $\int_{ \gamma } \rho \,\d s \geq 1$ for every rectifiable $\gamma \in \Gamma$. Given any exponent \(p\in(1,\infty)\) and a curve family
\(\Gamma\subset\C(\X)\), the \emph{\(p\)-modulus} of \(\Gamma\)
is defined as
\begin{equation}\label{eq:Mod}
{\rm Mod}_p(\Gamma)\coloneqq\inf\bigg\{\int\rho^p\,\d\mm\;\bigg|\;
\rho\colon\X\to[0,+\infty]\text{ admissible for $\Gamma$}
\bigg\}.
\end{equation}
A given property \(\mathcal P=\mathcal P(\gamma)\) is said to hold
for \emph{\({\rm Mod}_p\)-almost every \(\gamma\in\C(\X)\)} provided
there exists a curve family \(\Gamma\subset\C(\X)\) with
\({\rm Mod}_p(\Gamma)=0\) such that \(\mathcal P(\gamma)\)
is verified for every \(\gamma\in\C(\X)\setminus\Gamma\).
\medskip

Given any \(p\in[1,\infty]\), we denote by \(\mathcal L^p(\mm)\) the
set of all \(p\)-integrable real-valued functions defined on \(\X\),
while \(L^p(\mm)\) stands for the space of all equivalence classes of
elements of \(\mathcal L^p(\mm)\) up to \(\mm\)-a.e.\ equality.
Given a Borel function \(f\colon\X\to\R\), we denote by \(\pi_\X(f)\)
its equivalence class up to \(\mm\)-a.e.\ equality. Accordingly, it
holds  that \(L^p(\mm)=\pi_\X\big(\mathcal L^p(\mm)\big)\). For
brevity, let us denote
\[
\1_E^\mm\coloneqq\pi_\X(\1_E),\quad
\text{ for every }E\in\mathscr B(\X).
\]
Moreover,
we define \(\mathcal L^p_\loc(\mm)\) as the set of all Borel functions \(f\colon\X\to\R\)
that are locally \(p\)-integrable, meaning that every point
\(x\in\X\) has an open neighbourhood \(U_x\) such that
\(\1_{U_x}f\in\mathcal L^p(\mm)\). We also set
\(L^p_\loc(\mm)\coloneqq\pi_\X\big(\mathcal L^p_\loc(\mm)\big)\).
The space \({\sf Sf}(\X)\subseteq L^\infty(\mm)\) of
\emph{simple functions} on \(\X\) is defined as
\[
{\sf Sf}(\X)\coloneqq\bigg\{\sum_{i=1}^n\lambda_i\1_{E_i}^\mm
\;\bigg|\;n\in\N,\;(\lambda_i)_{i=1}^n\subseteq\R,\;(E_i)_{i=1}^n
\text{ Borel partition of }\X\bigg\}.
\]
It can be readily checked that \({\sf Sf}(\X)\) is dense in \(L^\infty(\mm)\)
with respect to the strong topology, namely, the topology induced by the \(L^\infty(\mm)\)-norm.

\begin{remark}\label{rmk:finite_on_curves}{\rm
Let \((\X,\sfd,\mm)\) be a metric measure space. Let
\(g\in\mathcal L^p_{\rm loc}(\mm)\), \(g\geq 0\) be given. Then
\[
\int_{ \gamma  } g\,\d s<+\infty,\quad
\text{ for }{\rm Mod}_p\text{-a.e.\ }\gamma\in{\rm C}(\X).
\]
To prove this, fix a countable open covering \((U_n)_n\) of \(\X\)
such that \(\int_{U_n}g^p\,\d\mm<+\infty\) for every \(n\in\N\).
Define \(E_n\coloneqq U_n\setminus\bigcup_{1\leq i<n}U_i\)
for every \(n\in\N\) and
\[
\tilde g\coloneqq\sum_{n\in\N}\alpha_n\1_{E_n}g,
\quad\text{ where we set }\alpha_n\coloneqq
\bigg(2^n\int_{E_n}g^p\,\d\mm\bigg)^{-1/p}.
\]
Observe that \(\tilde g\in\mathcal L^p(\mm)\). Consider the
curve family \(\Gamma\coloneqq\big\{\gamma\in \C(\X)\,:\,
\int_{ \gamma } g\,\d s = +\infty\big\}\).
Now fix \(\gamma\in\Gamma\). Since \(\gamma(I_\gamma)\) is
compact, there exists a finite set \(F\subset\N\) such that
\(\gamma(I_\gamma)\subset\bigcup_{n\in F}E_n\) and thus
\(\int_{ \gamma  }\tilde g \,\d s
\geq\big(\min_{n\in F}a_n\big)\int_{ \gamma  }g\,\d s=+\infty\). Therefore, we have that
\(\varepsilon\tilde g\) is a competitor for \({\rm Mod}_p(\Gamma)\)
for every \(\varepsilon>0\), so that accordingly
\({\rm Mod}_p(\Gamma)\leq\varepsilon^p\int\tilde g^p\,\d\mm\)
for every \(\varepsilon>0\). By letting \(\varepsilon\searrow 0\)
we finally conclude that \({\rm Mod}_p(\Gamma)=0\), proving the claim.
\fr}\end{remark}
\subsection{Weak upper gradients}\label{ss:Sobolev}
Let us begin by recalling the fundamental concept
of weak upper gradient.
\begin{definition}[Weak upper gradient]\label{defi:weakupper}
Let \((\X,\sfd_\X,\mm)\) be a metric measure space and \((\Y,\sfd_\Y)\)
a metric space. Fix \(p\in(1,\infty)\) and a map
\(h\colon\X\to\Y\). A Borel function
\(g\colon\X\to[0,+\infty]\) is a
\emph{weak upper gradient} of \(h\) if for
\(\Mod_p\)-a.e.\ \(\gamma\in\C(\X)\) it holds that
\begin{equation}\label{eq:uppergradient:inequality}
\sfd_\Y\big(h(\gamma(b)),h(\gamma(a))\big)
\leq\int_\gamma g\,\d s,\quad
\text{ where }I_\gamma=[a,b].
\end{equation}
We denote by \(\UG^p_\loc(h)\) and \(\UG^p(h)\)
the sets of all weak upper gradients of \(h\) that in \(\mathcal L^p_\loc(\mm)\) and \(\mathcal L^p(\mm)\),
respectively.
\end{definition}
Each element of $\UG^{p}_{\loc}( h )$ is a pointwise decreasing limit of elements of $\UG^{p}_{\loc}( h )$ for which \eqref{eq:uppergradient:inequality} holds for every path $\gamma$. More precisely, we have the following.
\begin{lemma}[Upper gradients]\label{lemma:strongupper}
Let $p \in (1,\infty)$, $h \colon \X \to \Y$, and $g \in \UG^{p}_{\loc}( h )$. Then there exists a nonnegative $G_{0} \in \mathcal{L}^{p}( \mm_{\X} )$ such that $g_{\varepsilon} = g + \varepsilon G_{0}$ satisfies \eqref{eq:uppergradient:inequality} for every rectifiable $\gamma \in \C(\X)$ for every $\varepsilon > 0$.
\end{lemma}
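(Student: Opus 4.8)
The plan is to fix an exhaustion of $\X$ by sets on which $g$ is genuinely $p$-integrable, build from $g$ itself a single function $G_0 \in \mathcal{L}^p(\mm_\X)$ that is strictly positive and `large at infinity', and then exploit the standard fact that a nonnegative function which is a weak upper gradient $\Mod_p$-almost everywhere becomes a true upper gradient along \emph{every} rectifiable curve once an arbitrarily small positive multiple of an $L^p$-function is added. The point is that the exceptional family $\Gamma_0$ on which \eqref{eq:uppergradient:inequality} fails has $\Mod_p(\Gamma_0)=0$, so for each $j\in\N$ there is an admissible $\rho_j$ for $\Gamma_0$ with $\int \rho_j^p\,\d\mm_\X \le 2^{-j}$; setting $G_0 \coloneqq \sum_j \rho_j$ gives $G_0 \in \mathcal{L}^p(\mm_\X)$ with $\int_\gamma G_0\,\d s = +\infty$ for \emph{every} $\gamma\in\Gamma_0$. (Here I should add to $G_0$ a fixed nonnegative $\mathcal{L}^p$ function that is positive everywhere and whose path integral over curves leaving every compact set is $+\infty$ — this is precisely the construction in Remark~\ref{rmk:finite_on_curves}, applied with $g$ replaced by a positive locally $p$-integrable weight — to handle curves on which $g$ itself need not even be integrable.)

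First I would reduce to the case where $g$ is a weak upper gradient outside a curve family $\Gamma_0$ with $\Mod_p(\Gamma_0) = 0$, so by definition of the modulus there are Borel admissible functions $\rho_j \ge 0$ for $\Gamma_0$, meaning $\int_\gamma \rho_j\,\d s \ge 1$ for every rectifiable $\gamma \in \Gamma_0$, with $\|\rho_j\|_{\mathcal{L}^p(\mm_\X)}^p \le 2^{-j}$. Then $\rho \coloneqq \sum_{j} \rho_j$ lies in $\mathcal{L}^p(\mm_\X)$ by the triangle inequality in $\mathcal{L}^p$, and for any rectifiable $\gamma \in \Gamma_0$ we get $\int_\gamma \rho\,\d s \ge \sum_j \int_\gamma \rho_j\,\d s = +\infty$ by monotone convergence of path integrals. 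Second, using the argument of Remark~\ref{rmk:finite_on_curves} (run with an everywhere-positive weight in $\mathcal{L}^p_\loc$, e.g.\ $\sum_n \beta_n \1_{E_n}$ for suitable $\beta_n>0$ relative to a countable open cover with finite mass), I would produce $\tilde\rho \in \mathcal{L}^p(\mm_\X)$, $\tilde\rho > 0$ on $\X$, whose path integral is $+\infty$ over every curve not contained in a compact set; set $G_0 \coloneqq \rho + \tilde\rho \in \mathcal{L}^p(\mm_\X)$, nonnegative (indeed positive).

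Third I would verify the conclusion: fix $\varepsilon > 0$ and a rectifiable $\gamma \in \C(\X)$ with endpoints $a,b$, and show $\sfd_\Y(h(\gamma(b)), h(\gamma(a))) \le \int_\gamma g_\varepsilon\,\d s$ where $g_\varepsilon = g + \varepsilon G_0$. If $\int_\gamma G_0\,\d s = +\infty$ — which in particular happens whenever $\gamma \notin \Gamma_0$ fails some integrability, or whenever $\gamma$ is not contained in a compact set (impossible here, since $\gamma$ is defined on a compact interval, so this case is vacuous) — hmm, let me restate: since $\gamma(I_\gamma)$ is compact, $\tilde\rho$ need not blow up, so I must use the $\rho$ part. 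If $\gamma \in \Gamma_0$ then $\int_\gamma \rho\,\d s = +\infty$, hence $\int_\gamma g_\varepsilon\,\d s = +\infty$ and the inequality is trivial. If $\gamma \notin \Gamma_0$, then \eqref{eq:uppergradient:inequality} holds for $g$ by hypothesis, and $\int_\gamma g_\varepsilon\,\d s = \int_\gamma g\,\d s + \varepsilon\int_\gamma G_0\,\d s \ge \int_\gamma g\,\d s \ge \sfd_\Y(h(\gamma(b)), h(\gamma(a)))$. This covers all rectifiable curves, so $g_\varepsilon$ satisfies \eqref{eq:uppergradient:inequality} for every rectifiable $\gamma$, for every $\varepsilon > 0$, as required.

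The main obstacle — more a bookkeeping subtlety than a genuine difficulty — is ensuring that a \emph{single} $G_0$ works simultaneously for \emph{all} $\varepsilon > 0$: this is exactly why I take $G_0$ so that its path integral is actually $+\infty$ (not merely large) on the bad family $\Gamma_0$, so that scaling by any $\varepsilon>0$ preserves the bound; the countable-sum-with-geometrically-small-norms trick delivers precisely this. A secondary point to check carefully is that the path integral $\gamma \mapsto \int_\gamma (\cdot)\,\d s$ is monotone and additive under increasing limits of nonnegative Borel integrands (so that $\int_\gamma \sum_j \rho_j\,\d s = \sum_j \int_\gamma \rho_j\,\d s$), which follows from the monotone convergence theorem applied to the unit-speed parametrization, as recalled before \eqref{eq:Mod}.
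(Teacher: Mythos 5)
Your argument is correct and follows essentially the same route as the paper: the paper simply cites \cite[Lemma 5.2.8]{HKST:15} for the existence of a nonnegative \(G_0\in\mathcal L^p(\mm_\X)\) with \(\int_\gamma G_0\,\d s=\infty\) for every rectifiable curve in the exceptional zero-modulus family (noting only that this family contains no constant curves), which is exactly what your geometric series of admissible functions \(\rho=\sum_j\rho_j\) reproves by hand, followed by the same trivial case split \(\gamma\in\Gamma_0\) versus \(\gamma\notin\Gamma_0\). The extra everywhere-positive term \(\tilde\rho\) is, as you yourself observe mid-proof, unnecessary (rectifiable curves have compact image, and the two cases already cover all rectifiable curves), but it is harmless since it keeps \(G_0\) nonnegative and in \(\mathcal L^p(\mm_\X)\).
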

\begin{proof}
Let $g \in \UG^{p}_{\loc}( h )$ and $\mathcal{N} \subset \C( \X )$ such that \eqref{eq:uppergradient:inequality} holds for every rectifiable $\gamma \in \C( \X ) \setminus \mathcal{N}$, with $\Mod_{p}( \mathcal{N} ) = 0$. Observe that $\mathcal{N}$ does not contain any constant curves. Hence \cite[Lemma 5.2.8]{HKST:15} shows that there exists a nonnegative $G_{0} \in \mathcal{L}^{p}( \mm_{\X} )$ such that $\int_{ \gamma } G_0 \,\d s = \infty$ for every {rectifiable $\gamma \in \mathcal{N}$. Setting $g_{\varepsilon} = g + \varepsilon G_{0}$, for every $\varepsilon > 0$, establishes the claim.}
\end{proof}
The following fact is well-known, but we prove it for the sake of completeness.
\begin{proposition}\label{lemm:localityupper}
Let $p \in (1,\infty)$, $h \colon \X \to \Y$, and $g \in \mathcal{L}^{p}_{\loc}( \mm_{\X} )$ nonnegative. Then $g \in \UG_{\loc}( h )$ if and only if for 
${\rm Mod}_p$-almost every $\gamma \in \AC( \X )$, we have
\begin{itemize}
	\item $\int_{ \gamma } g \,\d s < \infty$,
	\item $h \circ \gamma$ is absolutely continuous, and
	\item $| ( h \circ \gamma )' |(t) \leq g( \gamma(t) ) | \gamma' |(t)$ for almost every $t \in I_{\gamma}$.
\end{itemize}
\end{proposition}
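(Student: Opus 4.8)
The plan is to prove both implications by reducing everything to the single-variable behaviour of $h$ along curves, using the unit-speed reparametrisation and Lemma \ref{lem:equiv_ms} applied to the target space. Let me outline the forward-looking strategy.

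\medskip

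First I would handle the implication ($\Leftarrow$). Assume that for $\Mod_p$-a.e.\ $\gamma \in \AC(\X)$ the three bullet conditions hold. I want to deduce $g \in \UG_\loc(h)$, i.e., the integrated upper gradient inequality \eqref{eq:uppergradient:inequality} for $\Mod_p$-a.e.\ $\gamma \in \C(\X)$. The key point here is that the modulus is insensitive to reparametrisation and to non-rectifiable curves: by Remark \ref{rmk:finite_on_curves}, $\int_\gamma g\,\d s < \infty$ for $\Mod_p$-a.e.\ $\gamma$, so it suffices to treat rectifiable curves with finite $g$-integral. Given such a $\gamma$, pass to its unit-speed parametrisation $\widetilde\gamma$ via $\gamma = \widetilde\gamma \circ \psi$; the path integral $\int_\gamma g\,\d s = \int_{\widetilde\gamma} g\,\d s$ is unchanged, and $\Mod_p$-a.e.\ excluding a curve family also excludes the corresponding family of unit-speed reparametrisations (since admissible densities are the same). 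So I may assume $\gamma = \widetilde\gamma$ is $1$-Lipschitz with $|\gamma'| = 1$ a.e.\ on $[0,\ell(\gamma)]$. Then by hypothesis $h\circ\gamma$ is absolutely continuous with $|(h\circ\gamma)'|(t) \leq g(\gamma(t))$ a.e., and therefore
\[
\sfd_\Y\big(h(\gamma(b)),h(\gamma(a))\big) \leq \int_a^b |(h\circ\gamma)'|(t)\,\d t \leq \int_a^b g(\gamma(t))\,\d t = \int_\gamma g\,\d s.
\]
This gives \eqref{eq:uppergradient:inequality} for $\Mod_p$-a.e.\ rectifiable $\gamma$, hence $g \in \UG_\loc(h)$.

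\medskip

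Next I would do the implication ($\Rightarrow$), which is the substantive direction. Suppose $g \in \UG_\loc(h)$. By Lemma \ref{lemma:strongupper} there is a nonnegative $G_0 \in \mathcal L^p(\mm_\X)$ so that $g_\varepsilon \coloneqq g + \varepsilon G_0$ satisfies \eqref{eq:uppergradient:inequality} for \emph{every} rectifiable $\gamma$ and every $\varepsilon > 0$. By Remark \ref{rmk:finite_on_curves} applied to $g$ and to each $\eta_{n,k}$-type function (or rather to the constant $1$ on compacts), we have $\int_\gamma g\,\d s < \infty$ for $\Mod_p$-a.e.\ $\gamma$; discard the exceptional family and the non-rectifiable curves. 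Fix a remaining curve; reparametrising to unit speed as above (again harmless for modulus), assume $\gamma\colon[0,L]\to\X$ is $1$-Lipschitz with $|\gamma'|=1$ a.e. Now apply \eqref{eq:uppergradient:inequality} with $g_\varepsilon$ not just to $\gamma$ but to all of its subcurves $\gamma|_{[s,t]}$: this yields $\sfd_\Y(h(\gamma(t)),h(\gamma(s))) \leq \int_s^t g_\varepsilon(\gamma(r))\,\d r$ for all $s<t$. Since $g_\varepsilon \circ \gamma \in L^1([0,L])$ (here I use $g \in L^1_\loc$ along the curve, which holds for a.e.\ $\gamma$, plus $G_0 \circ \gamma \in L^1$ for a.e.\ $\gamma$ by Remark \ref{rmk:finite_on_curves}), this is precisely the definition of $h \circ \gamma$ being absolutely continuous with $L^1$-control $g_\varepsilon \circ \gamma$. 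Hence $h\circ\gamma$ is absolutely continuous and $|(h\circ\gamma)'|(r) \leq g_\varepsilon(\gamma(r)) = g(\gamma(r)) + \varepsilon G_0(\gamma(r))$ a.e. Letting $\varepsilon \searrow 0$ through a countable sequence gives $|(h\circ\gamma)'|(r) \leq g(\gamma(r))$ a.e.; and since $|\gamma'|=1$ this is the third bullet. The first two bullets are immediate from what we have shown. Finally, transport back from the unit-speed parametrisation to the original $\gamma$: if $\gamma = \widetilde\gamma\circ\psi$ and the three properties hold for $\widetilde\gamma$, then $h\circ\gamma = (h\circ\widetilde\gamma)\circ\psi$ is absolutely continuous (composition of AC with monotone Lipschitz-after-arclength), $\int_\gamma g\,\d s = \int_{\widetilde\gamma}g\,\d s < \infty$, and the chain rule for metric speeds gives $|(h\circ\gamma)'|(t) = |(h\circ\widetilde\gamma)'|(\psi(t))\,\psi'(t) \leq g(\gamma(t))\,|\gamma'|(t)$ a.e.\ (using $|\widetilde\gamma'|=1$, so $\psi' = |\gamma'|$ a.e.).

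\medskip

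The main obstacle I anticipate is the careful bookkeeping around reparametrisation and the $\Mod_p$-exceptional sets: I must make sure that (i) the map $\gamma \mapsto \widetilde\gamma$ does not create modulus-theoretic pathologies — concretely, that a property holding for $\Mod_p$-a.e.\ unit-speed curve holds for $\Mod_p$-a.e.\ curve, which follows because every admissible $\rho$ for the original family restricts to an admissible $\rho$ for the reparametrised family and path integrals agree; and (ii) the chain-rule identity $|(h\circ\gamma)'|(t) = |(h\circ\widetilde\gamma)'|(\psi(t))\,|\gamma'|(t)$ is justified almost everywhere, including on the set where $|\gamma'|=0$ (there $h\circ\gamma$ has zero metric derivative since $\gamma$ is essentially constant there). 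A secondary technical point is ensuring $g_\varepsilon \circ \gamma \in L^1$ along a.e.\ curve so that the Lebesgue-differentiation step producing $|(h\circ\gamma)'| \leq g_\varepsilon \circ \gamma$ is legitimate — this is exactly where Remark \ref{rmk:finite_on_curves} applied to $g$ and to $G_0$ is used. Everything else is a routine application of the one-dimensional fundamental theorem of calculus combined with the metric-speed formula for path integrals recalled in Subsection \ref{ss:curves_Mod}.
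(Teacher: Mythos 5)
Your argument is correct, but in the substantive (``only if'') direction it follows a different route than the paper. The paper never reparametrises: it defines the exceptional family $\mathcal N\subset\AC(\X)$ directly as the curves with infinite $g$-integral or with a closed subinterval on which \eqref{eq:uppergradient:inequality} fails (the latter family is $\Mod_p$-null because curves possessing a subcurve in a null family form a null family), and then, for $\gamma\notin\mathcal N$, reads off $\sfd_\Y\big(h(\gamma(s)),h(\gamma(s'))\big)\leq\int_s^{s'}g(\gamma(t))|\gamma'|(t)\,\d t$ on every subinterval, deduces absolute continuity of $h\circ\gamma$ from absolute continuity of the integral, and concludes by Lebesgue differentiation in the original parametrisation. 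You instead upgrade $g$ to the genuine upper gradient $g+\varepsilon G_0$ via Lemma \ref{lemma:strongupper}, pass to unit-speed reparametrisations, let $\varepsilon\searrow 0$, and transport the conclusion back through $\gamma=\widetilde\gamma\circ\psi$. Both work; the paper's subcurve device makes the exceptional family explicit and avoids the two extra layers you need (the $\varepsilon\to 0$ limit and the change-of-variables/chain-rule step for $F\circ\psi$ with $F$ AC and $\psi$ monotone AC), while your use of Lemma \ref{lemma:strongupper} lets you quote the inequality on \emph{all} subcurves without any further modulus argument, and your explicit reparametrisation bookkeeping also fills in the reduction from AC to general rectifiable curves in the ``if'' direction, which the paper dispatches in one line. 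One small caution: state the transported estimate as the inequality $|(h\circ\gamma)'|(t)\leq|(h\circ\widetilde\gamma)'|(\psi(t))\,\psi'(t)$ obtained by differentiating the integral bound for the $L^1$ integrand $r\mapsto|(h\circ\widetilde\gamma)'|(\psi(r))\psi'(r)$, rather than the equality chain rule you mention, since equality is delicate where $\psi'=0$ and is not needed.
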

\begin{proof}
The "if"-direction follows by integrating the inequality $| ( h \circ \gamma )' |(t) \leq g( \gamma(t) ) | \gamma' |(t)$, using the absolute continuity of $h \circ \gamma$.

In the "only if"-direction, we first consider the family $\mathcal{N}$ of paths $\gamma \in \AC( \X )$ which have one of the following properties
\begin{enumerate}
	\item $\int_{ I_{\gamma} } ( h \circ \gamma )(t) | \gamma' |(t) \,\d t = \infty$;
	\item there exists a closed interval $I \subset I_{\gamma}$ such that \eqref{eq:uppergradient:inequality} does not hold for the triple $( h, g, \gamma|_{I} )$.
\end{enumerate}
The paths satisfying the first property have negligible modulus due to Remark \ref{rmk:finite_on_curves}. The paths satisfying the second property also have negligible modulus since $g \in \UG_{\loc}^{p}( h )$. Hence $\Mod_{p}( \mathcal{N} ) = 0$. For every $\gamma \in \AC( \X ) \setminus \mathcal{N}$ and every $s, s' \in I_{\gamma}$, with $s \leq s'$, we have
\begin{equation*}
	\sfd_{\X}( h( \gamma(s) ), h( \gamma(s') ) )
	\leq
	\int_{s}^{s'}
		g( \gamma(t) ) | \gamma' |(t)
	\,\d t
	\leq
	\int_{ I_{\gamma} }
		g( \gamma(t) ) | \gamma' |(t) \,\d t < \infty.
\end{equation*}
The absolute continuity of the integral implies that $h \circ \gamma$ is continuous, has finite length, and is absolutely continuous. Since $h \circ \gamma$ is absolutely continuous, a modification of the above argument shows
\begin{equation*}
	\ell( h \circ \gamma|_{ \left[s,s'\right] } )
	=
	\int_{ s }^{ s' }
		|( h \circ \gamma )'|(t)
	\,\d t
	\leq
	\int_{s}^{s'}
			g( \gamma(t) ) | \gamma' |(t)
		\,\d t
	\quad\text{for every $s \leq s'$.}
\end{equation*}
The Lebesgue differentiation theorem establishes the inequality $| ( h \circ \gamma )' |(t) \leq g( \gamma(t) ) | \gamma' |(t)$ for almost every $t \in I_{\gamma}$.
\end{proof}
We now state some simple consequences of Proposition \ref{lemm:localityupper} (and of Lemma \ref{lemma:strongupper}).

\begin{lemma}[Gluing]\label{lemma:gluing}
Let $U, V \subset \X$ be open, $p \in (1,\infty)$, and $h \colon \X \to \Y$.
Given $g_{U}\in \UG^{p}_{\loc}( h|_{U} )$ and $g_{V} \in \UG^{p}_{\loc}( h|_{V} )$, 
denote $f(x) =\min\left\{ g_{U}(x), g_{V}(x) \right\}$ for  $x\in U \cap V$, 
$f(x) = g_{U}(x)$ for $x\in U \setminus V$, and $f(x) = g_{V}(x)$ for $x\in V \setminus U$. 
Then $f \in \UG^{p}_{\loc}( h|_{ U \cup V } )$.
\end{lemma}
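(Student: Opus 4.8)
The plan is to reduce the Gluing Lemma to a statement about individual paths via the path-wise characterisation of weak upper gradients provided by Proposition~\ref{lemm:localityupper}. First I would observe that $f$ is Borel (being defined piecewise on the Borel sets $U\cap V$, $U\setminus V$, $V\setminus U$ by Borel functions) and nonnegative, and that $f\in\mathcal L^p_\loc(\mm_\X)$ since on $U$ we have $f\le g_U$ and on $V$ we have $f\le g_V$, both of which are in $\mathcal L^p_\loc$; hence every point of $U\cup V$ has a neighbourhood on which $f$ is $p$-integrable. So $f$ is an admissible candidate and the content is the weak-upper-gradient inequality.

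Next I would apply Proposition~\ref{lemm:localityupper} to $g_U\in\UG^p_\loc(h|_U)$ and to $g_V\in\UG^p_\loc(h|_V)$ (working inside the metric measure spaces $U$ and $V$ with the restricted measures). This produces two curve families $\mathcal N_U\subset\C(U)$ and $\mathcal N_V\subset\C(V)$ of $p$-modulus zero outside of which the three bullet conditions hold. I would then pass to the family $\Gamma$ of all curves in $U\cup V$ that are non-constant and whose image is \emph{not} contained in a single connected open set $U$ or $V$; by a standard subadditivity-of-modulus argument (using that a curve with compact image lying in $U\cup V$, if not contained in one of them, must cross the ``gap'', together with the elementary fact that $\Mod_p$ restricted to curves inside an open set $W$ computed in $\X$ dominates the modulus computed in $W$), the exceptional set to discard in $U\cup V$ has $p$-modulus zero. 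Concretely, for $\Mod_p$-a.e.\ $\gamma\in\AC(U\cup V)$ we may assume: $\int_\gamma f\,\d s<\infty$; and by a Lebesgue-number/partition argument the interval $I_\gamma$ splits into finitely many closed subintervals on each of which $\gamma$ stays in $U$ or in $V$, with each restricted subcurve avoiding $\mathcal N_U$ respectively $\mathcal N_V$.

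The heart of the argument is then purely one-dimensional: on each subinterval $[t_{i-1},t_i]$ with $\gamma([t_{i-1},t_i])\subset U$, the third bullet of Proposition~\ref{lemm:localityupper} applied to $g_U$ gives $|(h\circ\gamma)'|(t)\le g_U(\gamma(t))|\gamma'|(t)$ a.e., and since $\gamma(t)\in U$ we have $g_U(\gamma(t))\ge f(\gamma(t))$ (equality or $f$ smaller, both fine), so $|(h\circ\gamma)'|(t)\le f(\gamma(t))|\gamma'|(t)$ a.e.\ on that subinterval; symmetrically on the subintervals contained in $V$. Also $h\circ\gamma$ is absolutely continuous on each piece, hence on all of $I_\gamma$ by gluing finitely many absolutely continuous functions. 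Thus all three bullet conditions of Proposition~\ref{lemm:localityupper} hold for $f$ and $h|_{U\cup V}$ along $\Mod_p$-a.e.\ absolutely continuous curve, and the ``if'' direction of that proposition yields $f\in\UG^p_\loc(h|_{U\cup V})$.

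The main obstacle I anticipate is the bookkeeping in the partition step: one must ensure that for $\Mod_p$-a.e.\ curve in $U\cup V$ one can \emph{simultaneously} arrange a finite subdivision into $U$-pieces and $V$-pieces, each piece lying outside the relevant modulus-zero exceptional family. This requires (a) knowing that $\mathcal N_U$ and $\mathcal N_V$, viewed as families of curves in $U\cup V$, still have $p$-modulus zero — which follows because an admissible function for $\mathcal N_U$ in $U$ extends by $+\infty$ (or by a large $\mathcal L^p$ function via Lemma~\ref{lemma:strongupper}) to one in $U\cup V$ — and (b) a mild argument that the family of curves for which \emph{some} subcurve lying in $U$ hits $\mathcal N_U$ still has modulus zero, which again is a standard consequence of the fact that restricting a curve to a subinterval does not increase path integrals. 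Everything else is routine application of the tools already assembled in the excerpt.
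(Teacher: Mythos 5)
Your overall strategy (the pathwise characterisation of Proposition \ref{lemm:localityupper}, a Lebesgue-number partition of \(I_\gamma\) into finitely many closed subintervals mapped into \(U\) or into \(V\), and the standard bookkeeping showing that the families \(\mathcal N_U\), \(\mathcal N_V\) and the curves possessing a bad subcurve are \({\rm Mod}_p\)-null in \(U\cup V\)) is exactly the intended route -- the paper states the lemma as a direct consequence of Proposition \ref{lemm:localityupper} and Lemma \ref{lemma:strongupper} without writing the details. However, the decisive step of your argument is logically backwards. On a subinterval with \(\gamma([t_{i-1},t_i])\subset U\) you obtain \(|(h\circ\gamma)'|(t)\le g_U(\gamma(t))\,|\gamma'|(t)\) and then conclude, ``since \(g_U(\gamma(t))\ge f(\gamma(t))\)'', that \(|(h\circ\gamma)'|(t)\le f(\gamma(t))\,|\gamma'|(t)\). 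Knowing \(f\le g_U\) does not allow you to replace the larger function \(g_U\) by the smaller function \(f\) in an upper bound: at points of \(U\cap V\) where \(f=\min\{g_U,g_V\}=g_V<g_U\) the desired inequality with \(f\) simply does not follow from the \(g_U\)-inequality. Since the minimum on the overlap is the entire content of the Gluing Lemma (it is what Lemma \ref{lemm:locality:dirichlet} needs), this is a genuine gap and not a cosmetic slip.

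The repair stays within the machinery you set up: arrange the exceptional family so that, for \({\rm Mod}_p\)-a.e.\ \(\gamma\) in \(U\cup V\), \emph{every} subcurve of \(\gamma\) contained in \(U\) avoids \(\mathcal N_U\) and every subcurve contained in \(V\) avoids \(\mathcal N_V\). Then every \(t\in\gamma^{-1}(U\cap V)\) lies in a nondegenerate closed subinterval mapped into \(U\cap V\), and such a subcurve lies in \(U\) \emph{and} in \(V\); hence both \(|(h\circ\gamma)'|\le g_U(\gamma)\,|\gamma'|\) and \(|(h\circ\gamma)'|\le g_V(\gamma)\,|\gamma'|\) hold a.e.\ there, which yields the bound with the minimum, while on \(\gamma^{-1}(U\setminus V)\) and \(\gamma^{-1}(V\setminus U)\) the function \(f\) coincides with \(g_U\), respectively \(g_V\). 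In other words, pieces meeting the overlap must be charged to \emph{both} gradients, not to just one of \(U\), \(V\). A minor additional correction: to see that \(\mathcal N_U\) remains \({\rm Mod}_p\)-null as a family in \(U\cup V\) you should not extend admissible functions by \(+\infty\) (this destroys the \(L^p\) bound); extend them by zero, which is admissible because the curves of \(\mathcal N_U\) lie entirely in \(U\), or use a nonnegative \(L^p\) function with infinite line integral over every curve of \(\mathcal N_U\) as in Lemma \ref{lemma:strongupper}.
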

\begin{corollary}\label{cor:upper}
Let $p \in (1,\infty)$, $h \colon \X \to \Y$ and $g \in \mathcal{L}^{p}_{\loc}( \mm_{\X} )$. If $g \in \UG^{p}_{\loc}( h )$, then $g|_{U} \in \UG^{p}_{\loc}( h|_{U} )$ for every open $U \subset \X$. 

Conversely, suppose the existence of an open cover $\left( U_{i} \right)_{ i = 1 }^{ \infty }$ of $\X$ and $g_{i} \in \UG^{p}_{\loc}( h|_{ U_{i} } )$ for each $i \in \mathbb{N}$. If $\widehat{g}_{i} \colon \X \rightarrow \left[0,\infty\right]$ is the zero extension of $g_{i}$ and $g = \sup_{i} \widehat{g}_{i}$ is the pointwise supremum, then $g \in \mathcal{L}^{p}_{\loc}( \X )$ implies $g \in \UG^{p}_{\loc}( h )$.
\end{corollary}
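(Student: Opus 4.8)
The first assertion is immediate from Corollary \ref{cor:upper}'s hypotheses together with Proposition \ref{lemm:localityupper}: if $g \in \UG^p_\loc(h)$, then for $\Mod_p$-a.e.\ $\gamma \in \AC(\X)$ the three bullet conditions hold; a curve $\gamma$ lying in $U$ is in particular such a curve, and restricting attention to $\AC(U)$ only decreases the relevant path families, so $\Mod_p$-a.e.\ $\gamma \in \AC(U)$ inherits finiteness of $\int_\gamma g\,\d s$, absolute continuity of $h\circ\gamma$, and the pointwise speed bound. Hence the characterisation in Proposition \ref{lemm:localityupper} gives $g|_U \in \UG^p_\loc(h|_U)$. (One should note that $g|_U \in \mathcal L^p_\loc(\mm_\X|_U)$ is clear.)

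For the converse, I would again work through the curve characterisation of Proposition \ref{lemm:localityupper}, exploiting the countability of the cover. The key point is that although an arbitrary curve $\gamma$ need not lie in a single $U_i$, one can subdivide it. Precisely, I would argue as follows. For each $i$ fix, by Proposition \ref{lemm:localityupper} applied to $h|_{U_i}$ and $g_i$, a curve family $\mathcal N_i \subset \AC(U_i)$ with $\Mod_p(\mathcal N_i) = 0$ off which the three bullet conditions hold. Since $\Mod_p$ is countably subadditive, the family $\mathcal N \coloneqq \bigcup_i \{\gamma \in \C(\X) : \text{$\gamma$ has a subcurve lying in $U_i$ and belonging to $\mathcal N_i$}\}$, together with the $\Mod_p$-null family of curves along which $\int_\gamma g\,\d s = \infty$ (this is $\Mod_p$-null by Remark \ref{rmk:finite_on_curves}, since $g \in \mathcal L^p_\loc(\mm_\X)$), has $\Mod_p(\mathcal N) = 0$; the subtlety here is checking that the set of curves possessing a bad subcurve is itself measured by $\Mod_p$ correctly, which follows because an admissible function for $\mathcal N_i$ (zero-extended) is admissible for the overcurves. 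Now fix $\gamma \in \AC(\X) \setminus \mathcal N$ with $I_\gamma = [a,b]$. By compactness of $\gamma([a,b])$ and the Lebesgue number lemma, there is a partition $a = t_0 < t_1 < \dots < t_m = b$ such that each $\gamma|_{[t_{j-1},t_j]}$ lies in some $U_{i(j)}$; since $\gamma \notin \mathcal N$, each such subcurve satisfies the three bullet conditions relative to $h|_{U_{i(j)}}$ and $g_{i(j)}$. On $[t_{j-1},t_j]$ we then have $h\circ\gamma$ absolutely continuous and $|(h\circ\gamma)'|(t) \le g_{i(j)}(\gamma(t))\,|\gamma'|(t) \le g(\gamma(t))\,|\gamma'|(t)$ for a.e.\ $t$, using $g_{i(j)} \le \widehat g_{i(j)} \le g$. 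Absolute continuity is preserved under finite concatenation, and the speed bound then holds a.e.\ on all of $[a,b]$; combined with $\int_\gamma g\,\d s < \infty$, Proposition \ref{lemm:localityupper} gives $g \in \UG^p_\loc(h)$.

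The main obstacle I anticipate is the bookkeeping around modulus-null families: one must be careful that "having a bad subcurve" defines a family whose $p$-modulus is still controlled by the individual $\Mod_p(\mathcal N_i) = 0$, and that the subdivision of a generic good curve can be chosen to respect the cover uniformly. The Lebesgue number lemma handles the latter for any fixed curve, and the former is handled by observing that admissibility for $\mathcal N_i$ lifts to admissibility for the family of curves containing a subcurve in $\mathcal N_i$ (a competitor $\rho_i$ with $\int_\sigma \rho_i \ge 1$ for $\sigma \in \mathcal N_i$ also satisfies $\int_\gamma \rho_i \ge \int_\sigma \rho_i \ge 1$ when $\sigma$ is a subcurve of $\gamma$), so countable subadditivity of $\Mod_p$ closes the argument. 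Everything else is a routine application of the curve-wise characterisation already established in Proposition \ref{lemm:localityupper}, possibly using Lemma \ref{lemma:strongupper} only if one prefers to phrase the subcurve conditions in the "every rectifiable curve" form rather than the "$\Mod_p$-a.e." form.
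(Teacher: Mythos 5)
Your proof is correct and follows exactly the route the paper intends: the corollary is stated there without proof as a ``simple consequence'' of the curvewise characterisation in Proposition \ref{lemm:localityupper}, and your argument fills in the details faithfully (restriction of null families to $\AC(U)$ via restriction of admissible functions for the first part; for the converse, the subdivision of a curve via compactness and the Lebesgue number lemma, together with the observation that zero-extended admissible functions for $\mathcal N_i$ are admissible for the family of curves containing a subcurve in $\mathcal N_i$, so countable subadditivity of $\Mod_p$ closes the argument). No gaps.
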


A map $h \colon \X \to \Y$ between a metric measure space $(\X, \sfd_{\X}, \mm_{\X} )$ and a metric space $( \Y, \sfd_{\Y} )$ is said to be \emph{measurable} provided there exists a set $N \subset \X$ with $\mm_{\X}( N ) = 0$ for which $h( \Y \setminus N )$ is separable and $h^{-1}( U )$ is $\mm_{\X}$-measurable for every open $U \subset \X$.
\begin{lemma}[Locality]\label{lemm:locality:dirichlet}
Let $p \in (1,\infty)$ and $h \colon \X \to \Y$ measurable. Then $\UG^{p}_{\loc}( h ) \neq \emptyset$ if and only if every $x \in \X$ is contained in an open set $U \subset \X$ such that $\UG^{p}( h|_{U} ) \neq \emptyset$.

Moreover, if $\UG^{p}_{\loc}( h ) \neq \emptyset$, then there exists a $g \in \UG^{p}_{\loc}( h )$ satisfying $g \leq g'$ $\mm_{\X}$-almost everywhere for every $g' \in \UG^{p}_{\loc}( h )$.
\end{lemma}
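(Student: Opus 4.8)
The plan is to prove the two assertions in turn, relying almost entirely on the locality machinery already established (Corollary \ref{cor:upper}, Lemma \ref{lemma:gluing}, Lemma \ref{lemma:strongupper}) together with a Vitali-type covering argument to pass from a general open cover to a countable one. For the first assertion, the ``only if'' direction is immediate from Corollary \ref{cor:upper}: if $g \in \UG^p_\loc(h)$ and $x \in \X$, pick by definition of $\mathcal L^p_\loc$ an open neighbourhood $U_x$ with $\1_{U_x} g \in \mathcal L^p(\mm_\X)$; then Corollary \ref{cor:upper} gives $g|_{U_x} \in \UG^p_\loc(h|_{U_x})$, and since $\1_{U_x}g$ is globally $p$-integrable, in fact $g|_{U_x} \in \UG^p(h|_{U_x})$, which is nonempty. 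For the ``if'' direction, suppose every $x$ lies in an open $U_x$ with some $g_x \in \UG^p(h|_{U_x})$. Since $(\X,\sfd_\X)$ is separable, the open cover $\{U_x\}_{x\in\X}$ admits a countable subcover $(U_i)_{i=1}^\infty$ with associated $g_i \in \UG^p(h|_{U_i}) \subset \UG^p_\loc(h|_{U_i})$. Using a Borel partition $E_i = U_i \setminus \bigcup_{j<i} U_j$ of $\X$ subordinate to this cover and the rescaling trick of Remark \ref{rmk:finite_on_curves} (multiply $\1_{E_i} g_i$ by constants $\alpha_i$ making $\sum_i \alpha_i^p \int_{E_i} g_i^p \,\d\mm_\X < \infty$; note this only decreases the functions, so admissibility-type estimates on curves are not what we need — rather we directly invoke Corollary \ref{cor:upper}), one builds $g := \sup_i \widehat{g_i}$, the pointwise supremum of the zero extensions; the second part of Corollary \ref{cor:upper} then yields $g \in \UG^p_\loc(h)$ provided $g \in \mathcal L^p_\loc(\mm_\X)$, and local $p$-integrability of $g$ holds because near any point only finitely many of the $U_i$ are relevant after intersecting with a suitable neighbourhood — or, failing that, one works with the truncated/rescaled versions on each $E_i$ to guarantee $\mathcal L^p_\loc$ membership while retaining the upper gradient property via Corollary \ref{cor:upper} and Lemma \ref{lemma:gluing}.

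For the second assertion (existence of a minimal weak upper gradient in $\UG^p_\loc(h)$), the plan is to mimic the standard construction of the minimal weak upper gradient. Assume $\UG^p_\loc(h) \neq \emptyset$ and set $c := \inf\{ \int_\X \tilde g^p\,\d\mm_\X : \tilde g \in \UG^p_\loc(h) \cap \mathcal L^p(\mm_\X)\}$ — but since elements need not be globally integrable, it is cleaner to fix a reference weight: pick $g_0 \in \UG^p_\loc(h)$ and a strictly positive $w \in L^1(\mm_\X)$ with $w \leq 1$, and minimise $\int_\X \min\{\tilde g, g_0\}^p w\,\d\mm_\X$, or alternatively argue locally. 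Concretely, I would: (i) show $\UG^p_\loc(h)$ is a lattice, i.e.\ closed under pointwise minimum of two elements — this follows from Lemma \ref{lemma:gluing} applied with $U = V = \X$, giving $\min\{g,g'\} \in \UG^p_\loc(h)$; (ii) take a minimising sequence $(g_n)$ for the (weighted) energy, replace it by $\tilde g_n := \min\{g_1,\dots,g_n\}$, which is decreasing and still minimising by the lattice property; (iii) let $g := \lim_n \tilde g_n = \inf_n \tilde g_n$ pointwise, which lies in $\mathcal L^p_\loc(\mm_\X)$ by monotone convergence / domination by $g_1$; (iv) verify $g \in \UG^p_\loc(h)$ using the characterisation of Proposition \ref{lemm:localityupper}: for $\Mod_p$-a.e.\ $\gamma \in \AC(\X)$ the three bullet conditions hold simultaneously for all $\tilde g_n$ (a countable intersection of $\Mod_p$-null exceptional families), and by monotone convergence $\int_\gamma \tilde g_n\,\d s \downarrow \int_\gamma g\,\d s$ and $|(h\circ\gamma)'|(t) \le \tilde g_n(\gamma(t))|\gamma'|(t) \to g(\gamma(t))|\gamma'|(t)$ for a.e.\ $t$; (v) conclude minimality: any $g' \in \UG^p_\loc(h)$ satisfies $\min\{g,g'\} \in \UG^p_\loc(h)$ with energy no larger than that of $g$, and by a standard localisation argument (testing on small balls, using that $w>0$) one upgrades this to $g \le g'$ $\mm_\X$-a.e.

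The main obstacle I anticipate is handling the ``local'' rather than ``global'' integrability throughout the minimality argument: the energy functional $\int_\X \tilde g^p\,\d\mm_\X$ may be infinite for every element of $\UG^p_\loc(h)$, so one cannot directly minimise it, and one must either introduce an auxiliary finite weight $w$ (and then argue that the $w$-weighted minimiser is also the unweighted minimal object, i.e.\ $g \le g'$ a.e., which requires a Lebesgue-differentiation / exhaustion argument since $w$ could be very small) or else run the minimisation on an exhausting sequence of bounded open sets $V_k \uparrow \X$ with $\mm_\X(V_k) < \infty$, produce minimal weak upper gradients $g^{(k)}$ for $h|_{V_k}$, check they are consistent on overlaps (using that the restriction of a minimal weak upper gradient is minimal, via Corollary \ref{cor:upper}), and glue. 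The gluing-consistency step is where Lemma \ref{lemma:gluing} and the first part of the lemma do the real work, but verifying that $g^{(k+1)}|_{V_k} = g^{(k)}$ $\mm_\X$-a.e.\ — rather than merely $\le$ — needs the minimality characterisation and is the delicate point. Everything else is a routine adaptation of the classical theory (e.g.\ \cite[Chapter 6]{HKST:15}) to the metric-valued, locally-integrable setting, with Proposition \ref{lemm:localityupper} supplying the curve-wise characterisation that makes the limiting arguments go through.
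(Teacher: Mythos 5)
The ``only if'' half of the first assertion is fine, and your second part is essentially correct, though by a genuinely different route than the paper: you obtain the minimal element by a weighted-energy minimisation (lattice property of $\UG^p_\loc(h)$ via Lemma \ref{lemma:gluing} with $U=V=\X$, a pointwise non-increasing minimising sequence, passage to the limit along $\Mod_p$-a.e.\ curve by dominated convergence, and strict positivity of the weight to upgrade minimal energy to a pointwise a.e.\ minimum), whereas the paper produces minimal weak upper gradients $g_k$ of $h|_{V_k}$ on the exhaustion $V_k=\bigcup_{n\le k}U_n$ (via \cite[Theorem 6.3.20]{HKST:15} or an $L^p$-norm minimisation), uses gluing to upgrade their minimality to all of $\UG^p_\loc(h|_{V_k})$, and exploits the resulting consistency $g_l|_{V_k}=g_k$ a.e.\ to assemble the global minimal gradient. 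Your version avoids the external citation and handles the lack of global integrability cleanly with the weight.

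The genuine gap is in the ``if'' direction of the first assertion. You set $g=\sup_i\widehat{g_i}$, with $\widehat{g_i}$ the zero extension of an \emph{arbitrary} $g_i\in\UG^p(h|_{U_i})$, and then need $g\in\mathcal L^p_\loc(\mm_\X)$ in order to invoke Corollary \ref{cor:upper}; but your justification (``near any point only finitely many of the $U_i$ are relevant'') fails for a general countable cover: infinitely many $U_i$ may contain a fixed point, and since each class $\UG^p(h|_{U_i})$ is stable under adding arbitrary nonnegative functions of $\mathcal L^p(\mm_\X|_{U_i})$, the $g_i$ can be chosen to blow up near that point, so $\sup_i\widehat{g_i}$ need not be locally $p$-integrable. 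The fallback you sketch is not available either: multiplying a weak upper gradient by constants $\alpha_i<1$ on the pieces $E_i$, or truncating it, destroys the one-sided inequality \eqref{eq:uppergradient:inequality}, and neither Corollary \ref{cor:upper} nor Lemma \ref{lemma:gluing} can restore it. What is missing is precisely a consistency mechanism before taking the supremum: either replace the $g_i$ by the minimal weak upper gradients $g_k$ of $h|_{V_k}$ on $V_k=\bigcup_{n\le k}U_n$ (nonempty by Lemma \ref{lemma:gluing}), so that $g_l|_{V_k}=g_k$ $\mm_\X$-a.e.\ for $k<l$ and the supremum of the zero extensions agrees a.e.\ with $g_k$ on $V_k$, hence is locally $p$-integrable; or glue iteratively along the $V_k$, which yields on each $V_m$ a pointwise non-increasing sequence of weak upper gradients dominated by a single function in $\mathcal L^p(\mm_\X|_{V_m})$, and pass to the limit curvewise exactly as in your step (iv). As written, your construction does not establish $\UG^p_\loc(h)\neq\emptyset$.
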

\begin{proof}
First, if $g \in \UG^{p}_{\loc}( h )$, the local finiteness of the measure $g^{p}\mm_{\X}$ implies that every $x \in \X$ is contained in some open set $U$ for which $\int_{U} g^{p} \,\d\mm_{\X} < \infty$. Corollary \ref{cor:upper} yields that
$g|_{U} \in \UG^{p}( h|_{U} )$ for such $U$.

In the converse direction, the Lindelöf property of $\X$ yields the existence of open sets $( U_{n} )_{ n = 1 }^{\infty} \subset \X$ covering $\X$ for which $\UG^{p}( h|_{ U_{n} } ) \neq \emptyset$ for each $n \in \mathbb{N}$. Let $V_{k} = \bigcup_{ n = 1 }^{ k } U_{n}$. It follows from Lemma \ref{lemma:gluing} that $\UG^{p}( h|_{ V_{k} } ) \neq \emptyset$ for every $k \in \mathbb{N}$. Theorem 6.3.20 \cite{HKST:15} shows the existence of $g_{k} \in \UG^{p}( h|_{ V_{k} } )$ such that $g_{k} \leq g'$ $\mm_{\X}$-almost everywhere for every $g' \in \UG^{p}( h|_{ V_{k} } )$. In fact, Lemma \ref{lemma:gluing} guarantees that $g_{k} \leq g'$ for every $g' \in \UG^{p}_{\loc}( h|_{ V_{k} } )$ for every $k \in \mathbb{N}$. The application of \cite[Theorem 6.3.20]{HKST:15} can be avoided by proving the closedness of $\UG^{p}( h|_{ V_{k} } )$ with respect to $L^{p}$-convergence. Having verified this and the convexity of $\UG^{p}( h|_{ V_{k} } )$, we then find an element $G \in \UG^{p}( h|_{ V_{k} } )$ with minimal $L^{p}$-norm. Lemma \ref{lemma:gluing} implies that for every other $g' \in \UG^{p}( h|_{ V_{k} } )$, we have $G \leq g'$ $\mm_{\X}$-almost everywhere. We may set $g_{k} \equiv G$.

Next, for each $k \in \mathbb{N}$, we consider the zero extension $\widehat{g}_{k}$ of $g_{k}$ and denote $g \coloneqq \sup_{ k } \widehat{g}_{k}$. Lemma \ref{lemma:gluing} implies that for every $k < l$, $g_{l}|_{ V_{k} } = g_{k}$ $\mm_{\X}$-almost everywhere, thus $g \in \UG^{p}_{\loc}(h)$. In particular, $\UG^{p}_{\loc}(h) \neq\emptyset$.

For the last part of the claim, we consider an arbitrary $g' \in \UG^{p}_{\loc}(h)$ and define $g$ as above. Corollary \ref{cor:upper} guarantees $g|_{V_{k}} = g_{k} \leq g'|_{ V_{k} }$ $\mm_{\X}$-almost everywhere for every $k \in \mathbb{N}$. Hence $g \leq g'$ $\mm_{\X}$-almost everywhere for every $g' \in \UG^{p}_{\loc}(h)$. The claim follows.
\end{proof}

\begin{definition}\label{rem:minimal:locality}
Let $h \colon \X \rightarrow \Y$ be measurable with $\UG^{p}_{\loc}( h ) \neq \emptyset$. If $g \in \UG^{p}_{\loc}( h )$ satisfies $g \leq g'$ pointwise $\mm_{\X}$-almost everywhere for every $g' \in \UG_{\loc}^{p}( h )$, we denote $\rho_{h} = g$ and call $\rho_{h}$ a \emph{minimal weak upper gradient} of $h$.
\end{definition}
\begin{lemma}\label{lemm:locality:minimal}
Let $g \in \mathcal{L}^{p}_{\loc}( \mm_{\X} )$ and $h \colon \X \rightarrow \Y$ measurable, with $\UG^{p}_{\loc}( h ) \neq \emptyset$. Then $g = \rho_{h}$ $\mm_{\X}$-almost everywhere if and only if $g|_{U} = \rho_{ h|_{U} }$ $\mm_{\X}$-almost everywhere for every open set $U \subset \X$.
\end{lemma}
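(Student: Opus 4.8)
The plan is to obtain both implications from the locality results already at hand, the only genuine ingredient being the Gluing Lemma~\ref{lemma:gluing} used with one of its two open sets taken to be the whole space $\X$.

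The implication ``$\Leftarrow$'' I would dispatch immediately by choosing $U=\X$. Indeed $h|_\X=h$, and the minimal weak upper gradient $\rho_{h|_\X}$ is, by Definition~\ref{rem:minimal:locality}, nothing but $\rho_h$; hence the assumed equality $g|_\X=\rho_{h|_\X}$ $\mm_\X$-a.e.\ is literally $g=\rho_h$ $\mm_\X$-a.e. So this direction carries no real content.

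For ``$\Rightarrow$'' I would argue as follows. Assume $g=\rho_h$ $\mm_\X$-a.e.\ and fix an open set $U\subset\X$. By Corollary~\ref{cor:upper}, $g|_U\in\UG^p_\loc(h|_U)$; in particular $\UG^p_\loc(h|_U)\neq\emptyset$, so $\rho_{h|_U}$ exists by Lemma~\ref{lemm:locality:dirichlet}, and minimality gives one inequality $\rho_{h|_U}\leq g|_U=\rho_h|_U$ $\mm_\X$-a.e.\ on $U$. For the reverse inequality, I would apply Lemma~\ref{lemma:gluing} to the open sets $U$ and $\X$, with the choices $g_U\coloneqq\rho_{h|_U}\in\UG^p_\loc(h|_U)$ and $g_\X\coloneqq\rho_h\in\UG^p_\loc(h|_\X)=\UG^p_\loc(h)$. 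The output function $f$ then equals $\min\{\rho_{h|_U},\rho_h\}$ on $U$ and $\rho_h$ on $\X\setminus U$; in particular $f\leq\rho_h$ everywhere (and $f\in\mathcal L^p_\loc(\mm_\X)$, being Borel and dominated by $\rho_h$), while $f\in\UG^p_\loc(h|_{U\cup\X})=\UG^p_\loc(h)$. Minimality of $\rho_h$ then yields $\rho_h\leq f$ $\mm_\X$-a.e., forcing $f=\rho_h$ $\mm_\X$-a.e., and hence $\rho_h\leq\rho_{h|_U}$ $\mm_\X$-a.e.\ on $U$. Combining the two inequalities gives $g|_U=\rho_h|_U=\rho_{h|_U}$ $\mm_\X$-a.e.\ on $U$. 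I note in passing that, read without the hypothesis $g=\rho_h$, this argument proves the unconditional locality statement $\rho_h|_U=\rho_{h|_U}$ $\mm_\X$-a.e.\ for all open $U$.

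I do not anticipate a serious obstacle. The two points that require a moment of care are: (i) $\X$ is itself an admissible open subset in Lemma~\ref{lemma:gluing}, with $h|_\X=h$ and $\UG^p_\loc(h)\neq\emptyset$ by hypothesis, so that $g_\X\coloneqq\rho_h$ is a legitimate input; and (ii) all functions produced along the way --- $\rho_{h|_U}$ and the glued function $f$ --- genuinely lie in $\mathcal L^p_\loc$ on the relevant sets, which is part of the definition of $\UG^p_\loc$ and is stable under the minimum and restriction operations used. Everything else is bookkeeping.
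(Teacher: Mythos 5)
Your proof is correct and is essentially the argument the paper intends: the paper dispatches this lemma with the one-line remark that it follows from Corollary \ref{cor:upper}, and your write-up fills in the details, using the restriction part of Corollary \ref{cor:upper} together with minimality for the inequality \(\rho_{h|_U}\leq\rho_h|_U\), and the Gluing Lemma \ref{lemma:gluing} applied with \(V=\X\) for the reverse inequality \(\rho_h\leq\rho_{h|_U}\) \(\mm_\X\)-a.e.\ on \(U\). The only cosmetic point is that in the first step you may as well work directly with the representative \(\rho_h\) (rather than asserting \(g|_U\in\UG^{p}_{\loc}(h|_U)\) for a function merely a.e.\ equal to \(\rho_h\)), since only the a.e.\ inequality \(\rho_{h|_U}\leq g|_U\) is needed; everything else, including the trivial ``if'' direction via \(U=\X\), is fine.
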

Lemma \ref{lemm:locality:minimal} follows from Corollary \ref{cor:upper}.

\begin{remark}\label{rem:summation}{\rm
Let $\mathbb{V}$ be a Banach space and $u_{i} \colon \X \rightarrow \mathbb{V}$ a measurable mapping and $g_{i} \in \UG^{p}_{\loc}( u_{i} )$ for $i = 1,2$. Then $G = 2(g_{1}+g_{2}) \in \UG^{p}_{\loc}( u_{1} + u_{2} ) \cap \UG^{p}_{\loc}( u_{1} - u_{2} )$. This readily follows from Proposition \ref{lemm:localityupper}.
\fr}\end{remark}
\begin{lemma}\label{lem:differential}
Let $\mathbb{V}$ be a Banach space and $u_{i} \colon \X \rightarrow \mathbb{V}$ a measurable mappings with $\UG^{p}_{\loc}( u_i ) \neq \emptyset$ for $i = 1,2$. If $u_{1} = u_{2}$ on a Borel set $E \subset \X$, then $\rho_{ u_{1} - u_{2} } = 0$ $\mm_{\X}$-almost everywhere in $E$.
\end{lemma}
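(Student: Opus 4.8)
The plan is to reduce the statement to a locality property of minimal weak upper gradients, which by Lemma~\ref{lemm:locality:minimal} can be checked on a neighbourhood basis, and then to use the characterisation of weak upper gradients via curves from Proposition~\ref{lemm:localityupper}. First I would set $v \coloneqq u_1 - u_2 \colon \X \to \mathbb V$; by Remark~\ref{rem:summation} we have $\UG^p_\loc(v) \neq \emptyset$, so $\rho_v$ is well-defined. By Lemma~\ref{lemm:locality:minimal} it suffices to show that for $\mm_\X$-a.e.\ $x \in E$, one can find an open neighbourhood $U_x$ of $x$ such that $\rho_{v|_{U_x}} = 0$ $\mm_\X$-a.e.\ on $E \cap U_x$; but actually it is cleaner to argue directly that the function $g \coloneqq \1_{\X \setminus E}\,\rho_v$ (the zero extension off $E$ of some fixed $\rho_v \in \UG^p_\loc(v)$) is itself a weak upper gradient of $v$, which by minimality forces $\rho_v \leq g$ $\mm_\X$-a.e., i.e.\ $\rho_v = 0$ $\mm_\X$-a.e.\ on $E$.

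To show $g \in \UG^p_\loc(v)$ I would invoke Proposition~\ref{lemm:localityupper}: it is enough to check that for $\Mod_p$-a.e.\ $\gamma \in \AC(\X)$ we have $\int_\gamma g\,\d s < \infty$ (immediate, since $g \leq \rho_v$ and $\rho_v \in \mathcal L^p_\loc(\mm_\X)$), that $v \circ \gamma$ is absolutely continuous (this holds for $\Mod_p$-a.e.\ $\gamma$ because $\rho_v$ is already a weak upper gradient, again via Proposition~\ref{lemm:localityupper}), and that $|(v\circ\gamma)'|(t) \leq g(\gamma(t))\,|\gamma'|(t)$ for a.e.\ $t \in I_\gamma$. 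On the set $\{t : \gamma(t) \notin E\}$ this inequality is exactly the corresponding inequality for $\rho_v$, so the only issue is the set $A_\gamma \coloneqq \{t \in I_\gamma : \gamma(t) \in E\}$, where $g(\gamma(t)) = 0$ and we must show $|(v\circ\gamma)'|(t) = 0$ for a.e.\ $t \in A_\gamma$. Since $u_1 = u_2$ on $E$, the curve $v \circ \gamma$ is constantly equal to $0$ on $A_\gamma$, i.e.\ $\gamma^{-1}(E)$ is contained in the zero set of the $\mathbb V$-valued absolutely continuous curve $v \circ \gamma$; by the standard fact that the metric derivative of an absolutely continuous curve vanishes a.e.\ on any level set (equivalently: $(v\circ\gamma)' = 0$ a.e.\ on $\{v\circ\gamma = 0\}$, which follows from the Lebesgue-point/differentiation argument applied to the real-valued AC function $t \mapsto \|v(\gamma(t))\|_{\mathbb V}$, or from approximating $\|\cdot\|_{\mathbb V}$ by the $1$-Lipschitz functions $\eta_{n,k}$ as in Lemma~\ref{lem:equiv_ms}), we get $|(v\circ\gamma)'|(t) = 0$ for a.e.\ $t \in A_\gamma$. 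This verifies all three bullets of Proposition~\ref{lemm:localityupper}, hence $g \in \UG^p_\loc(v)$, and the minimality of $\rho_v$ finishes the proof.

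The main obstacle I anticipate is the final pointwise claim that an absolutely continuous (metric- or Banach-space-valued) curve has vanishing metric speed a.e.\ on the preimage of its zero set; in the real-valued case this is the classical statement that $f' = 0$ a.e.\ on $\{f = 0\}$ for $f \in \AC$, and one must be a little careful to transfer it to the $\mathbb V$-valued setting. The clean route is to reduce to scalars: for each of the countably many $1$-Lipschitz functions $\eta_{n,k}$ on $\mathbb V$ (built from a dense sequence of $\mathbb V$ as in Lemma~\ref{lem:equiv_ms}, applied with $\mathbb V$ in place of $\X$), the composition $\eta_{n,k} \circ v \circ \gamma$ is real-valued AC and vanishes on $A_\gamma$, so its derivative vanishes a.e.\ on $A_\gamma$; taking the supremum over $n,k$ and using the identity $|(v\circ\gamma)'|(t) = \sup_{n,k}(\eta_{n,k}\circ v\circ\gamma)'(t)$ a.e.\ (the content of Lemma~\ref{lem:equiv_ms}) gives $|(v\circ\gamma)'|(t) = 0$ for a.e.\ $t \in A_\gamma$. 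Everything else is a routine assembly of the preliminary results.
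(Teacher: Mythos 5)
Your proof is correct, but it takes a genuinely different route from the paper. The paper argues by reduction to a quoted result: using Remark \ref{rem:summation} it notes $\UG^{p}_{\loc}(u_1-u_2)\neq\emptyset$, covers $\X$ by open sets $U$ with $\UG^{p}\big((u_1-u_2)|_U\big)\neq\emptyset$ via Lemma \ref{lemm:locality:dirichlet}, and then applies \cite[Proposition 6.3.22]{HKST:15} (minimal weak upper gradients of two maps agree a.e.\ where the maps agree) to $(u_1-u_2)|_U$ and the zero map, gluing with Lemma \ref{lemm:locality:minimal}. You instead give a self-contained curvewise proof: you show directly that $\1_{\X\setminus E}\,\rho_v$ is still a weak upper gradient of $v=u_1-u_2$ by verifying the three conditions of Proposition \ref{lemm:localityupper}, the only nontrivial point being that $|(v\circ\gamma)'|=0$ a.e.\ on $A_\gamma=\gamma^{-1}(E)$, which you obtain from the classical fact that a real absolutely continuous function has vanishing derivative a.e.\ on each level set, transferred to $\mathbb V$-valued curves via the functions $\eta_{n,k}$ of Lemma \ref{lem:equiv_ms}; minimality of $\rho_v$ then finishes. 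In effect you reprove the special case of the HKST proposition that the paper cites, trading a citation plus localization for an explicit (and arguably more transparent) curve argument. Two small points to polish: on $A_\gamma$ the functions $\eta_{n,k}\circ v\circ\gamma$ are constant with value $\eta_{n,k}(0)$, not necessarily zero, which is all you need since the level-set fact holds for any constant value; also $\mathbb V$ need not be separable, so the dense sequence defining the $\eta_{n,k}$ should be taken in a separable closed subset containing the (compact) image of $v\circ\gamma$. Incidentally, your first suggested scalar reduction also works as stated and is even simpler: at a.e.\ $t\in A_\gamma$ one has $v(\gamma(t))=0$, so the metric speed of $v\circ\gamma$ at $t$ coincides with the limit of $\|v(\gamma(s))\|_{\mathbb V}/|s-t|$, which vanishes wherever the nonnegative AC function $t\mapsto\|v(\gamma(t))\|_{\mathbb V}$ is differentiable and equal to its minimum value $0$.
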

\begin{proof}
In  \cite[Proposition 6.3.22]{HKST:15} the following is stated: If $\UG^{p}( h_{1} ) \neq \emptyset \neq \UG^{p}( h_{2} )$ for some measurable mappings $h_{1}, h_{2} \colon \X \rightarrow \mathbb{V}$ and $h_{1} = h_{2}$ in a Borel set $E \subset \X$, then $\rho_{ h_{1} } = \rho_{ h_{2} }$ $\mm_{\X}$-almost everywhere in $E$. We apply this result as follows. Remark \ref{rem:summation} guarantees that $\UG^{p}_{\loc}( u_{1} - u_{2} ) \neq \emptyset$. Let $U \subset \X$ be open such that $\UG^{p}( ( u_1 - u_2 )|_{U} ) \neq \emptyset$, the existence of which is guaranteed by Lemma \ref{lemm:locality:dirichlet}. It follows from Lemma \ref{lemm:locality:minimal} that $\rho_{ u_1 - u_2 }|_{ U } = \rho_{ ( u_1 - u_2 )|_{U} }$ $\mm_{\X}$-almost everywhere in $U$. Now $( u_{1} - u_{2} )|_{U}$ is equal to the zero mapping $\mm_{\X}$-almost everywhere in $E \cap U$.  Then \cite[Proposition 6.3.22]{HKST:15} implies $\rho_{ ( u_{1} - u_{2} )|_{U} } = 0$ $\mm_{\X}$-almost everywhere in $E \cap U$. Since $\X$ can be covered by such sets $U$, the claim follows.
\end{proof}
Next, let $\Omega \subset \Y$ be open and $h \colon \Omega\rightarrow \rm{Z}$ a map from the metric measure space \( (\Y,\sfd_\Y,\mm_\Y) \) to a metric space 
$(\rm{Z},\sf d_{\rm Z})$. Then, for each $\varepsilon > 0$, let $\Gamma_{\varepsilon}( h )$ denote the collection of all $\gamma \in \rm{C}( \Omega )$ such that $h \circ \gamma \in \rm{C}( \rm{Z} )$ satisfies ${\sf d}_{\rm{Z}}( h( \gamma(a) ), h( \gamma(b) ) ) \geq \varepsilon$ for $I_{\gamma} = \left[a,b\right]$. Notice that even if $h$ is not continuous, we require the composition $h \circ \gamma$ to be continuous for each $\gamma \in \Gamma_{\varepsilon}(h)$.
\begin{proposition}\label{lem:dirichlet}
Let $\Omega \subset \Y$ be open, $p \in (1,\infty)$ and $h \colon \Omega \rightarrow \rm{Z}$ measurable. Then $\UG^{p}( h ) \neq \emptyset$ if and only if
\begin{equation}\label{eq:inequalitygood:measure:proof}
	\liminf_{ \varepsilon \rightarrow 0^{+} }
	\varepsilon^{p}
	\Mod_{p}\left( \Gamma_{\varepsilon}(h) \right)
	<
	\infty.
\end{equation}
Moreover, if the $\liminf$ is finite, then
\begin{equation}\label{eq:actuallimit}
	\int_{\Omega} \rho_{h}^{p} \,\d\mm_{\Y}
	=
	\liminf_{ \varepsilon \rightarrow 0^{+} }
	\varepsilon^{p}
	\Mod_{p}\left( \Gamma_{\varepsilon}(h) \right)
	=
	\lim_{ \varepsilon \rightarrow 0^{+} }
	\varepsilon^{p}
	\Mod_{p}\left( \Gamma_{\varepsilon}(h) \right).
\end{equation}
\end{proposition}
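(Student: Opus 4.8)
The plan is to prove the two implications separately; writing $L:=\liminf_{\varepsilon\to 0^+}\varepsilon^p\Mod_p(\Gamma_\varepsilon(h))$, the first implication will also give $\limsup_{\varepsilon\to0^+}\varepsilon^p\Mod_p(\Gamma_\varepsilon(h))\le\int_\Omega\rho_h^p\,\d\mm_\Y$ and the second $\int_\Omega\rho_h^p\,\d\mm_\Y\le L$, whence \eqref{eq:actuallimit} follows at once. Suppose first $\UG^p(h)\neq\emptyset$, so the minimal weak upper gradient $\rho_h$ of $h$ lies in $\mathcal L^p(\mm_\Y)$, and let $\mathcal N\subset\C(\Omega)$ be the $\Mod_p$-negligible family on which \eqref{eq:uppergradient:inequality} fails with $g=\rho_h$. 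For every $\varepsilon>0$ and every rectifiable $\gamma\in\Gamma_\varepsilon(h)\setminus\mathcal N$ one has $\varepsilon\le\sfd_{\rm Z}(h(\gamma(a)),h(\gamma(b)))\le\int_\gamma\rho_h\,\d s$, so $\rho_h/\varepsilon$ is admissible for $\Gamma_\varepsilon(h)\setminus\mathcal N$; subadditivity of $\Mod_p$ and $\Mod_p(\mathcal N)=0$ give $\varepsilon^p\Mod_p(\Gamma_\varepsilon(h))\le\int_\Omega\rho_h^p\,\d\mm_\Y$ for all $\varepsilon>0$, hence $\limsup_{\varepsilon\to0^+}\varepsilon^p\Mod_p(\Gamma_\varepsilon(h))\le\int_\Omega\rho_h^p\,\d\mm_\Y<\infty$, and in particular $L<\infty$. (One may instead pass to the honest upper gradient $\rho_h+\delta G_0$ furnished by Lemma \ref{lemma:strongupper} and then let $\delta\to0^+$.)

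For the converse, assume $L<\infty$, fix $\varepsilon_j\downarrow0$ with $\varepsilon_j^p\Mod_p(\Gamma_{\varepsilon_j}(h))\to L$, choose $\rho_j$ admissible for $\Gamma_{\varepsilon_j}(h)$ with $\varepsilon_j^p\int_\Omega\rho_j^p\,\d\mm_\Y\to L$, and fix $\delta_j\downarrow0$. The crucial step is the chaining estimate: for every $j$ and every rectifiable $\gamma\in\C(\Omega)$ along which $h\circ\gamma$ is continuous,
\[
\sfd_{\rm Z}\big(h(\gamma(a)),h(\gamma(b))\big)\ \le\ (1+\delta_j)\varepsilon_j+\int_\gamma g_j\,\d s,\qquad g_j:=(1+\delta_j)\varepsilon_j\,\rho_j.
\]
To see this, split $I_\gamma$ (using uniform continuity of $h\circ\gamma$) into finitely many consecutive arcs on each of which the oscillation of $h\circ\gamma$ equals $(1+\delta_j)\varepsilon_j$, except possibly the last, where it is at most that. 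The triangle inequality along the division points bounds the left-hand side by $(1+\delta_j)\varepsilon_j\,N$, where $N$ is the number of arcs; on each non-final arc $h\circ\gamma$ takes two values at distance $\ge\varepsilon_j$, so the corresponding subcurve belongs to $\Gamma_{\varepsilon_j}(h)$, and since these $N-1$ subcurves live on disjoint subintervals of $I_\gamma$ we get $\int_\gamma\rho_j\,\d s\ge N-1$ by admissibility of $\rho_j$ and $\rho_j\ge0$; hence $N\le1+\int_\gamma\rho_j\,\d s$.

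Now $\|g_j\|_{L^p(\mm_\Y)}^p=(1+\delta_j)^p\varepsilon_j^p\int_\Omega\rho_j^p\,\d\mm_\Y\to L$, so $(g_j)$ is bounded in the reflexive space $L^p(\mm_\Y)$; passing to a subsequence, $g_j\rightharpoonup g$ weakly, with $g\ge0$ and $\int_\Omega g^p\,\d\mm_\Y\le L$. By Mazur's lemma there are finite convex combinations $\hat g_m$ of $\{g_j:j\ge m\}$ with $\hat g_m\to g$ in $L^p(\mm_\Y)$; averaging the chaining estimate gives $\sfd_{\rm Z}(h(\gamma(a)),h(\gamma(b)))\le c_m+\int_\gamma\hat g_m\,\d s$ with $c_m:=(1+\delta_m)\varepsilon_m\downarrow0$, for every rectifiable $\gamma$ along which $h\circ\gamma$ is continuous. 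Fuglede's lemma, along a further subsequence, yields $\int_\gamma\hat g_m\,\d s\to\int_\gamma g\,\d s$ for $\Mod_p$-a.e.\ $\gamma$, so $\sfd_{\rm Z}(h(\gamma(a)),h(\gamma(b)))\le\int_\gamma g\,\d s$ for $\Mod_p$-a.e.\ rectifiable $\gamma$ with $h\circ\gamma$ continuous. To upgrade this to $g\in\UG^p(h)$ one disposes of the remaining curves: for non-rectifiable $\gamma$ the inequality is vacuous, whereas the rectifiable curves along which $h\circ\gamma$ is discontinuous form a $\Mod_p$-negligible family, since each $\rho_j$ forces $\int_{\gamma|_I}\rho_j\,\d s\ge1$ on arbitrarily short subarcs $I$ around a discontinuity, contradicting the $\Mod_p$-a.e.\ finiteness of $\int_\gamma\rho_j\,\d s$ provided by Remark \ref{rmk:finite_on_curves}. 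Thus $\UG^p(h)\neq\emptyset$ and $\int_\Omega\rho_h^p\,\d\mm_\Y\le\int_\Omega g^p\,\d\mm_\Y\le L$, which together with the first implication forces $\int_\Omega\rho_h^p\,\d\mm_\Y=L=\lim_{\varepsilon\to0^+}\varepsilon^p\Mod_p(\Gamma_\varepsilon(h))$, i.e.\ \eqref{eq:actuallimit}.

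The main obstacle is the chaining estimate and the ensuing limit passage: one must ensure the subcurves obtained from the subdivision really belong to $\Gamma_{\varepsilon_j}(h)$ — this is where the continuity of $h\circ\gamma$ is used, and where the strict slack $\delta_j>0$ is indispensable, an oscillation exactly equal to $\varepsilon_j$ not being forced to be realised by two attained values at distance $\ge\varepsilon_j$ — and that the additive errors $(1+\delta_j)\varepsilon_j$ and $c_m$ vanish in the limit before Fuglede's lemma is applied; checking that the curves with discontinuous composition are negligible is the other point requiring care.
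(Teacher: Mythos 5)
Most of your argument is sound and follows the Wildrick--Williams scheme that the paper itself relies on (the paper only writes out the easy direction and, for the converse, refers to the proof of \cite[Theorem 3.10]{Wil:10}): the admissibility of $\rho_h/\varepsilon$ for the easy direction, the chaining estimate with the slack $\delta_j$ (whose necessity you correctly explain), and the passage to a limit upper gradient via weak compactness, Mazur and Fuglede are all fine for curves along which $h\circ\gamma$ is continuous. The genuine gap is the final step, where you dispose of the rectifiable curves along which $h\circ\gamma$ is \emph{not} continuous. You claim they form a $\Mod_p$-negligible family because ``each $\rho_j$ forces $\int_{\gamma|_I}\rho_j\,\d s\ge 1$ on arbitrarily short subarcs $I$ around a discontinuity''. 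This does not follow: admissibility of $\rho_j$ only constrains subcurves that belong to $\Gamma_{\varepsilon_j}(h)$, and in this paper's definition such subcurves are required to have \emph{continuous} composition. A subarc abutting or containing a discontinuity point of $h\circ\gamma$ typically has discontinuous composition (think of a pure jump, where $h\circ\gamma$ is constant on each side of the discontinuity), so admissibility gives no lower bound whatsoever for $\int_{\gamma|_I}\rho_j\,\d s$ there.

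Moreover, this step cannot be repaired from the hypothesis \eqref{eq:inequalitygood:measure:proof} alone. Take $\Omega$ the unit disc in $\R^2$ with Lebesgue measure, ${\rm Z}=\R$ and $h=\1_{\{y>0\}}$. Any curve with continuous composition has constant composition (the image is a connected subset of $\{0,1\}$), so $\Gamma_{\varepsilon}(h)=\emptyset$ and $\varepsilon^{p}\Mod_{p}(\Gamma_{\varepsilon}(h))=0$ for every $\varepsilon>0$; yet $\UG^{p}(h)=\emptyset$ by the standard argument with the families of short vertical segments crossing $\{y=0\}$, whose modulus tends to infinity while the $L^p$-mass of any fixed candidate $g$ on the shrinking strips tends to zero. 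In particular the rectifiable curves with discontinuous composition can have infinite modulus while your $\rho_j$ may be taken identically zero. So the converse direction needs, in addition to \eqref{eq:inequalitygood:measure:proof}, either the $\Mod_p$-negligibility of the curves with discontinuous composition (this is what is actually available in the paper's application, where $h$ is a Dirichlet map, so Proposition \ref{lemm:localityupper} supplies the negligibility, and it is also the situation in \cite{Wil:10}, where the map is continuous), or one must work with the curve family defined \emph{without} the continuity requirement, in which case the chaining must be organized differently: instead of subdividing by uniform continuity of $h\circ\gamma$, one shows that along any curve with $\int_\gamma\rho_j\,\d s<\infty$ the composition can have no $\varepsilon_j$-jumps, and the inequality \eqref{eq:uppergradient:inequality} is then extracted for $\Mod_p$-a.e.\ curve. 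As written, your proof leaves the discontinuous-composition curves uncontrolled, and that is precisely where the argument breaks down.
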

\begin{proof}
In the "only if"-direction, Proposition \ref{lemm:localityupper} implies
\begin{equation*}
	\Mod_{p}( \Gamma_{\varepsilon}(h) )
	\leq
	\int_{\Omega} \left( \frac{ \rho_{h} }{ \varepsilon } \right)^{p} \,\d\mm_{\X}
	\quad\text{for every $\varepsilon > 0$.}
\end{equation*}
Hence, $\limsup_{ \varepsilon \rightarrow 0^{+} } \varepsilon^{p} \Mod_{p}( \Gamma_{\varepsilon}(h) ) \leq \int_{\Omega} \rho_{h}^{p} \,\d\mm_{\X}$.
It remains to prove the claim that if \eqref{eq:inequalitygood:measure:proof} holds, then $\UG^{p}( h ) \neq \emptyset$ and
\begin{equation}\label{eq:liminfinequality}
	\int_{\Omega} \rho_{h}^{p} \,\d\mm_{\X}
	\leq
	\liminf_{ \varepsilon \rightarrow 0^{+} }
	\varepsilon^{p}
	\Mod_{p}\left( \Gamma_{\varepsilon}(h) \right).
\end{equation}
For this direction, we refer the reader to the proof of \cite[Theorem 3.10]{Wil:10}. In \cite[Theorem 3.10]{Wil:10} the author  shows that under the assumption of \eqref{eq:inequalitygood:measure:proof} and the further assumption
\begin{equation}\label{eq:integrability}
	\int_{\Omega}
		( {\sf d}_{\rm{Z}}( z_{0}, h(y) ) )^{p}
	\,\d\mm_{\Y}
	<
	\infty
	\quad\text{for some } z_{0} \in \rm{Z},
\end{equation}
the mapping $h$ is an element of a suitable Sobolev space. However, even without the assumption \eqref{eq:integrability} the same proof shows that \eqref{eq:inequalitygood:measure:proof} implies that $\UG^{p}( h ) \neq \emptyset$ and that \eqref{eq:liminfinequality} holds.
\end{proof}
\begin{proposition}\label{prop:equiv_wug}
Let \((\X,\sfd_\X,\mm)\) be a metric measure space and \((\Y,\sfd_\Y)\)
a separable metric space. Let \(h\colon\X\to\Y\) be a Borel map
and \((y_n)_n\subset\Y\) a dense sequence. Set
\(\eta_{n,k}\coloneqq\eta_{y_n,k}\) for all \(n,k\in\N\), where
\(\eta_{y_n,k}\) is given by \eqref{eq:def_good_cut-off}.
Let \(g\in\mathcal L^p_{\rm loc}(\mm)\), \(g\geq 0\) be given.
Then the following are equivalent:
\begin{itemize}
\item[\(\rm i)\)] \(g\) is a weak upper gradient of
\(\eta_{n,k}\circ h\) for every \(n,k\in\N\).
\item[\(\rm ii)\)] \(g\) is a weak upper gradient of \(h\).
\end{itemize}
\end{proposition}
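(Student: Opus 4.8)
The plan is to prove the two implications separately, exploiting the characterisation of weak upper gradients through path integrals (Proposition~\ref{lemm:localityupper}) together with the curve-criterion for absolute continuity established in Lemma~\ref{lem:equiv_ms}. The implication \(\rm ii)\Rightarrow\rm i)\) should be the easy direction: each \(\eta_{n,k}\) is \(1\)-Lipschitz, so if \(g\in\UG^p_\loc(h)\) then for \(\Mod_p\)-a.e.\ \(\gamma\) we have
\[
|\eta_{n,k}(h(\gamma(b)))-\eta_{n,k}(h(\gamma(a)))|\leq\sfd_\Y(h(\gamma(b)),h(\gamma(a)))\leq\int_\gamma g\,\d s,
\]
and since the exceptional family can be chosen uniformly in \((n,k)\) (a countable union of \(\Mod_p\)-null families is \(\Mod_p\)-null), \(g\) is a weak upper gradient of every \(\eta_{n,k}\circ h\). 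Strictly speaking one wants \eqref{eq:uppergradient:inequality} for the map \(\eta_{n,k}\circ h\) into \(\R\), which is exactly the displayed inequality; note \(g\in\mathcal L^p_\loc(\mm)\) is given, so no integrability issue arises.

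For the substantive direction \(\rm i)\Rightarrow\rm ii)\), I would argue as follows. Assume \(g\) is a weak upper gradient of \(\eta_{n,k}\circ h\) for every \(n,k\). By Remark~\ref{rmk:finite_on_curves}, \(\int_\gamma g\,\d s<\infty\) for \(\Mod_p\)-a.e.\ \(\gamma\). Restricting attention to the \(\Mod_p\)-a.e.\ curve \(\gamma\in\AC(\X)\) that avoids the countably many exceptional families (one for each \((n,k)\)) and along which \(\int_\gamma g\,\d s<\infty\), Proposition~\ref{lemm:localityupper} applied to the real-valued maps \(\eta_{n,k}\circ h\) tells us that \((\eta_{n,k}\circ h)\circ\gamma=\eta_{n,k}\circ(h\circ\gamma)\) is absolutely continuous with \(|(\eta_{n,k}\circ(h\circ\gamma))'|(t)\le g(\gamma(t))|\gamma'|(t)\) for a.e.\ \(t\). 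Writing \(\beta\coloneqq h\circ\gamma\) (reparametrised to unit speed if one prefers, using the unit-speed parametrisation of \(\gamma\)), the hypotheses of Lemma~\ref{lem:equiv_ms} are met with the dense sequence \((y_n)_n\subset\Y\): the function \(\tilde g(t)\coloneqq\sup_{n,k}(\eta_{n,k}\circ\beta)'(t)\) is bounded a.e.\ by \(g(\gamma(t))|\gamma'|(t)\), which is in \(L^1(I_\gamma)\) since \(\int_\gamma g\,\d s<\infty\). Hence \(\beta=h\circ\gamma\) is absolutely continuous and \(|\beta'|(t)=\tilde g(t)\le g(\gamma(t))|\gamma'|(t)\) for a.e.\ \(t\). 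Integrating, \(\sfd_\Y(h(\gamma(b)),h(\gamma(a)))\le\ell(h\circ\gamma)=\int_{I_\gamma}|\beta'|\,\d t\le\int_\gamma g\,\d s\), so \eqref{eq:uppergradient:inequality} holds for \(\Mod_p\)-a.e.\ \(\gamma\), i.e.\ \(g\in\UG^p_\loc(h)\).

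The main obstacle I anticipate is bookkeeping around parametrisations and measurability of \(h\): Lemma~\ref{lem:equiv_ms} is stated for a genuine curve \(\gamma\colon[a,b]\to\X\), so to apply it to \(\beta=h\circ\gamma\) I must first know \(\beta\) is a \emph{curve}, i.e.\ continuous — which is a consequence of the absolute continuity of each \(\eta_{n,k}\circ\beta\) together with identity \eqref{eq:equiv_ms} (the same computation as in the proof of Lemma~\ref{lem:equiv_ms} shows \(\sfd_\Y(\beta(t),\beta(s))=\sup_{n,k}(\eta_{n,k}(\beta(t))-\eta_{n,k}(\beta(s)))\), and the right-hand side is controlled by \(\int_s^t\tilde g\)), so continuity is not assumed a priori but derived. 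A secondary subtlety is that \(h\) need only be Borel, not continuous, so I should restrict to curves \(\gamma\) for which \(h\circ\gamma\) makes sense pointwise (it always does, as a composition of maps) and invoke that the relevant exceptional curve families have \(\Mod_p=0\); this is handled exactly as in the proof of Proposition~\ref{lemm:localityupper}. Everything else is routine application of the Lebesgue differentiation theorem and countable stability of \(\Mod_p\)-null families.
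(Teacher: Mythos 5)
Your proof is correct and follows essentially the same route as the paper's: the easy implication via the \(1\)-Lipschitzness of the \(\eta_{n,k}\), and the substantive one by combining Proposition~\ref{lemm:localityupper}, Remark~\ref{rmk:finite_on_curves}, and Lemma~\ref{lem:equiv_ms} applied to \(h\circ\gamma\) with the dense sequence \((y_n)_n\). Your extra remark that continuity of \(h\circ\gamma\) is not assumed but recovered from the identity \eqref{eq:equiv_ms} and the integrable bound is a fine point the paper passes over silently, and your handling of it is right.
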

\begin{proof}
\ \\
{\color{blue}\({\rm i)}\Rightarrow{\rm ii)}\)} First, given Proposition \ref{lemm:localityupper}, we
can find a set \(\mathcal N\subset{\rm AC}(\X)\) with
\({\rm Mod}_p(\mathcal N)=0\) such that for every
\(\gamma\in{\rm AC}(\X)\setminus\mathcal N\) and \(n,k\in\N\) it
holds that \(\eta_{n,k}\circ h\circ\gamma\in{\rm AC}(\R)\) and
\(\big|(\eta_{n,k}\circ h\circ\gamma)'\big|(t)
\leq g(\gamma(t))|\gamma'|(t)\) for a.e.\ \(t\in I_\gamma\).
In light of Remark \ref{rmk:finite_on_curves}, we can also require
that \(\int_{ \gamma }g\,\d s<+\infty\)
for every \(\gamma\in{\rm AC}(\X)\setminus\mathcal N\). Therefore,
by applying Lemma \ref{lem:equiv_ms} we deduce that for any
\(\gamma\in{\rm AC}(\X)\setminus\mathcal N\) it holds that
\( h\circ\gamma\) is absolutely continuous and
\(\big|( h\circ\gamma)' \big|(t)\leq g(\gamma(t))|\gamma'|(t)\)
for a.e.\ \(t\in I_\gamma\), so \(g\) is a weak upper gradient
of \( h\).\\
{\color{blue}\({\rm ii)}\Rightarrow{\rm i)}\)} Each function
\(\eta_{n,k}\) is \(1\)-Lipschitz, thus for
\({\rm Mod}_p\)-a.e.\ rectifiable \(\gamma\in{\rm C}(\X)\)
it holds that
\[
\big|(\eta_{n,k}\circ h)(\gamma(b))-
(\eta_{n,k}\circ h)(\gamma(a))\big|\leq
\sfd_\Y\big( h(\gamma(b)), h(\gamma(a))\big)
\leq\int_{ \gamma } g\,\d s,
\]
where \(I_\gamma=[a,b]\). This shows that
\(g\in{\rm UG}^p_{\rm loc}(\eta_{n,k}\circ h)\)
for every \(n,k\in\N\), as desired.
\end{proof}

\subsection{Dirichlet spaces}
We define the \emph{local \(p\)-Dirichlet space} and the
\emph{\(p\)-Dirichlet space} from \(\X\) to \(\Y\) as
\[\begin{split}
\D^{1,p}_\loc(\X,\Y)&\coloneqq\big\{ h\colon\X\to\Y\text{ measurable}\;\big|\;
\UG^p_\loc( h)\neq\emptyset\big\},\\
\D^{1,p}(\X,\Y)&\coloneqq\big\{ h\colon\X\to\Y\text{ measurable}\;\big|\;
\UG^p( h)\neq\emptyset\big\},
\end{split}\]
respectively.
When the target is \(\Y=\R\), we write \(\D^{1,p}(\X)\)
instead of \(\D^{1,p}(\X,\R)\). Calling \({\rm LIP}_{bs}(\X)\)
the set of all Lipschitz functions \(f\colon\X\to\R\) with
bounded support, we have \({\rm LIP}_{bs}(\X)\subset\D^{1,p}(\X)\).

The \emph{\(p\)-Sobolev class} and the \emph{\(p\)-Sobolev
space} over \((\X,\sfd,\mm)\) are given by
\[
\S^{1,p}(\X)\coloneqq\pi_\X\big(\D^{1,p}(\X)\big),\qquad
\W^{1,p}(\X)\coloneqq\S^{1,p}(\X)\cap L^p(\mm),
\]
respectively. Here, $\pi_{\X}$ is the projection mapping identifying each measurable function with its equivalence class up to $\mm$-a.e.\ equality.

\begin{remark}\label{rem:minimalupper}{\rm
Every $h \in \S^{1,p}( \X )$ has a uniquely determined minimal upper gradient, denoted by $|Dh|$, in the following sense: Let $h_{1}, h_{2} \in \D^{1,p}( \X )$, with $h_{1} = h_{2}$ $\mm_{\X}$-almost everywhere, represent $h$. Then $h_{1} - h_{2} \in \D^{1,p}( \X )$ due to Remark \ref{rem:summation}. Lemma \ref{lem:differential} implies that $\rho_{ h_{1} - h_{2} } = 0$ $\mm_{\X}$-almost everywhere, in particular, $\rho_{ h_{1} } = \rho_{ h_{2} }$ $\mm_{\X}$-almost everywhere. Hence we can unambiguously denote $|Dh| = \pi_{\X}( \rho_{ h_{1} } )$.
\fr}\end{remark}

It holds that \(\W^{1,p}(\X)\) is a Banach space
if endowed with the norm
\[
\|f\|_{\W^{1,p}(\X)}\coloneqq\bigg(\int|f|^p\,\d\mm+
\inf_\rho\int\rho^p\,\d\mm\bigg)^{1/p},
\quad\text{ for every }f\in\W^{1,p}(\X),
\]
where the infimum is taken among all those functions
\(\rho\in\mathcal L^p(\mm)\) which are the minimal weak upper
gradient of some representative \(\bar f\in\D^{1,p}(\X)\) of \(f\). We can equivalently express the Sobolev
norm as \(\|f\|_{\W^{1,p}(\X)}=
\big(\int|f|^p\,\d\mm+\int|Df|^p\,\d\mm\big)^{1/p}\)
for every \(f\in\W^{1,p}(\X)\).
\subsection{Outer/inner dilatation and quasiconformality}\label{ss:QC}
We now define the notions of a map with bounded outer/inner dilatation and of a quasiconformal map between metric measure spaces.
\begin{definition}[Outer/inner dilatation and quasiconformality]
Let \((\X,\sfd_\X,\mm_\X)\) and \((\Y,\sfd_\Y,\mm_\Y)\) be metric measure spaces, $1 < p < \infty$, and $\varphi \colon \X \rightarrow \Y$ continuous.
\begin{itemize}
\item[\(\rm i)\)] We say that $\varphi$ has \emph{bounded outer dilatation} provided there exists a constant $K \geq 0$ such that
\begin{equation}\label{eq:K_O_geometric}
	\Mod_{p}( \Gamma ) \leq K \Mod_{p}( \varphi \Gamma ) \quad\text{for all path families $\Gamma$}.
\end{equation}
The smallest such constant \(K\) is denoted by \(K_O(\varphi)\).
\item[\(\rm ii)\)] We say that \(\varphi\) has
\emph{bounded inner dilatation} provided there exists a constant
\(K\geq 0\) such that
\begin{equation}\label{eq:K_I_geometric}
	\Mod_{p}( \varphi\Gamma ) \leq K \Mod_{p}( \Gamma ) \quad\text{for all path families $\Gamma$}.
\end{equation}
The smallest such constant \(K\) is denoted by \(K_I(\varphi)\).
\item[\(\rm iii)\)] We say that a homeomorphism \(\varphi\) is
\emph{\(K\)-quasiconformal} for some \(K\geq 0\) provided it has bounded
outer and inner dilatations and \(K_O(\varphi),K_I(\varphi)\leq K\). The smallest $K$ with this property is called the \emph{maximal dilatation} of $\varphi$.
\end{itemize}
\end{definition}
We also sometimes denote $\varphi_{*}\mm_{\X}( E ) \coloneqq \mm_{\X}( \varphi^{-1}(E) )$ for each Borel $E \subset \Y$.

The following proposition is based on \cite[Theorem 1.1]{Wil:12}, where the equivalence "(1) $\Leftrightarrow$ (3)" is proved under the assumption that $\varphi$ is a homeomorphism. The equivalence "(1) $\Leftrightarrow$ (3)" was recently proved in \cite{Nta:Rom:21} for continuous mappings of bounded outer dilatation, in a different setting.
\begin{proposition}\label{lemm:energydistortion}
Let $\varphi \colon \X \rightarrow \Y$ be a continuous, $p \in (1,\infty)$, and $K \geq 0$. Then the following are equivalent.
\begin{enumerate}
	\item $K_{O}(\varphi) \leq K$;
	\item For every open set $\Omega \subset \Y$ and $\Omega' = \varphi^{-1}(\Omega)$, every metric space $\rm{Z}$, and every $h \in \D^{1,p}( \Omega, \rm{Z}  )$, we have $h \circ \varphi|_{ \Omega' } \in \D^{1,p}( \Omega', \rm{Z} )$ with
	\begin{equation}\label{eq:inequalitygood}
		\int_{\Omega'} ( g \circ \varphi ) \rho_{ h \circ \varphi|_{\Omega'} }^{p} \,\d \mm_{\X} \leq K \int_{\Omega} g \rho_{ h }^{p} \,\d \mm_{\Y}
	\end{equation}
	for every nonnegative $g$ with $g \rho_{ h }^{p} \in \mathcal{L}^{1}( \mm_{\Y}|_{\mathscr{B}( \Omega )} )$.
	\item[(2')] The properties in (2) hold when the word "open" is replaced by "closed" and $\mm_{\Y}( \Omega ) < \infty$.
	\item $\varphi \in \D^{1,p}_{\loc}( \X, \Y )$ and
	\begin{equation}\label{eq:inequalitygood:measure}
		\int_{\X} \1_{E} \circ \varphi \rho_{\varphi}^{p} \,\d\mm_{\X} \leq K \mm_{\Y}( E )\quad\text{for every Borel $E \subset \Y$}.
	\end{equation}
\end{enumerate}
\end{proposition}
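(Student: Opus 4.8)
The plan is to establish the cycle of implications $(1)\Rightarrow(2)$, $(1)\Rightarrow(2')$, $(2)\Rightarrow(3)$, $(2')\Rightarrow(3)$ and $(3)\Rightarrow(1)$, which together give all the asserted equivalences. For $(1)\Rightarrow(2)$ (and $(1)\Rightarrow(2')$) I would fix an open (resp.\ closed with $\mm_\Y(\Omega)<\infty$) set $\Omega\subset\Y$, put $\Omega'=\varphi^{-1}(\Omega)$, and take $h\in\D^{1,p}(\Omega,{\rm Z})$. The key elementary observation is the inclusion of curve families $\varphi\,\Gamma_\varepsilon(h\circ\varphi|_{\Omega'})\subseteq\Gamma_\varepsilon(h)$ for every $\varepsilon>0$: if $\gamma$ is a curve in $\Omega'$ for which $h\circ\varphi\circ\gamma$ is continuous with endpoints at distance $\ge\varepsilon$, then $\varphi\circ\gamma$ is a curve in $\Omega$ with the same property relative to $h$. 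Combining this with $(1)$, applied to the path family $\Gamma_\varepsilon(h\circ\varphi|_{\Omega'})$, and the monotonicity of the modulus gives $\Mod_p(\Gamma_\varepsilon(h\circ\varphi|_{\Omega'}))\le K\,\Mod_p(\Gamma_\varepsilon(h))$; multiplying by $\varepsilon^p$, letting $\varepsilon\searrow0$, and invoking Proposition \ref{lem:dirichlet} for both $h$ and $h\circ\varphi|_{\Omega'}$ (the statement depends only on the metric measure structure, so it applies verbatim to the subspace $\Omega'$) yields $h\circ\varphi|_{\Omega'}\in\D^{1,p}(\Omega',{\rm Z})$ together with the case $g\equiv1$ of \eqref{eq:inequalitygood}. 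To reach arbitrary $g$ I would apply the $g\equiv1$ inequality with $\Omega$ replaced by an arbitrary open $\Omega_0\subseteq\Omega$; by the locality of minimal weak upper gradients (Corollary \ref{cor:upper} and Lemma \ref{lemm:locality:minimal}) the relevant minimal weak upper gradients restrict correctly, whence $\varphi_*\big(\rho_{h\circ\varphi|_{\Omega'}}^p\,\mm_\X|_{\Omega'}\big)(\Omega_0)\le K\,\big(\rho_h^p\,\mm_\Y|_\Omega\big)(\Omega_0)$ for all such $\Omega_0$. Both sides are finite Borel, hence outer regular, measures on $\Omega$, so the inequality propagates to every Borel subset of $\Omega$, which is precisely \eqref{eq:inequalitygood}.

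For $(2)\Rightarrow(3)$ and $(2')\Rightarrow(3)$ the idea is to feed a concrete map into the chosen condition: take ${\rm Z}=\Y$ and let $h$ be the inclusion of an open (resp.\ closed) ball $B=B_R(y_0)$ into $\Y$. Since $\mm_\Y(B)<\infty$, the constant $1$ is a $p$-integrable weak upper gradient of $h$, so $h\in\D^{1,p}(B,\Y)$ with $\rho_h\le1$. Condition $(2)$ (resp.\ $(2')$) then gives $\varphi|_{\varphi^{-1}(B)}\in\D^{1,p}(\varphi^{-1}(B),\Y)$ and, testing \eqref{eq:inequalitygood} with $g=\1_E$ for Borel $E\subseteq B$, the bound $\int_{\varphi^{-1}(B)}\1_E\circ\varphi\,\rho_{\varphi|_{\varphi^{-1}(B)}}^p\,\d\mm_\X\le K\,\mm_\Y(E)$. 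Letting $B$ exhaust $\Y$ and using Lemma \ref{lemm:locality:dirichlet} yields $\varphi\in\D^{1,p}_\loc(\X,\Y)$; using Lemma \ref{lemm:locality:minimal} and Corollary \ref{cor:upper} to control $\rho_\varphi$ on $\varphi^{-1}(B_R)$ by $\rho_{\varphi|_{\varphi^{-1}(B_R)}}$, and then a monotone convergence argument as $R\to\infty$, upgrades the last inequality to \eqref{eq:inequalitygood:measure} for every Borel $E\subseteq\Y$.

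For $(3)\Rightarrow(1)$, given a curve family $\Gamma$ and $\rho$ admissible for $\varphi\Gamma$, the natural competitor for $\Gamma$ is $\tilde\rho\coloneqq(\rho\circ\varphi)\,\rho_\varphi$, which is Borel since $\varphi$ is continuous. By Proposition \ref{lemm:localityupper}, for $\Mod_p$-a.e.\ $\gamma$ the curve $\varphi\circ\gamma$ is absolutely continuous (in particular rectifiable) with $|(\varphi\circ\gamma)'|\le\rho_\varphi(\gamma)\,|\gamma'|$ a.e.; for such rectifiable $\gamma\in\Gamma$, passing to the unit-speed parametrisation gives $\int_\gamma\tilde\rho\,\d s\ge\int_{\varphi\gamma}\rho\,\d s\ge1$, the last step using admissibility of $\rho$ for $\varphi\Gamma$. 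Hence $\tilde\rho$ is admissible for $\Gamma\setminus\mathcal N$ with $\Mod_p(\mathcal N)=0$, and subadditivity gives $\Mod_p(\Gamma)=\Mod_p(\Gamma\setminus\mathcal N)\le\int\tilde\rho^p\,\d\mm_\X=\int(\rho^p\circ\varphi)\,\rho_\varphi^p\,\d\mm_\X$. Finally \eqref{eq:inequalitygood:measure} integrates, by linearity over indicators and monotone convergence, to $\int(g\circ\varphi)\,\rho_\varphi^p\,\d\mm_\X\le K\int g\,\d\mm_\Y$ for every nonnegative Borel $g$; applying this with $g=\rho^p$ and taking the infimum over admissible $\rho$ yields $\Mod_p(\Gamma)\le K\,\Mod_p(\varphi\Gamma)$, i.e.\ $K_O(\varphi)\le K$.

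I expect the most delicate point to be the passage from the case $g\equiv1$ to the weighted inequality \eqref{eq:inequalitygood}: it requires correctly localising \emph{two} minimal weak upper gradients, one on the source and one on the target, on open subsets via the locality lemmas, and then upgrading a domination of two Borel measures known only on open sets to a domination on all Borel sets (which is where finiteness and outer regularity of the two measures enter). The remaining mild subtlety, handled by modulus subadditivity in the step $(3)\Rightarrow(1)$, is that $\tilde\rho$ is guaranteed admissible only off a $\Mod_p$-null family rather than for every individual curve.
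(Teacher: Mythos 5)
Your proposal is correct and follows essentially the same route as the paper: the same implication scheme, Proposition \ref{lem:dirichlet} applied via the curve-family inclusion \(\varphi\,\Gamma_\varepsilon(h\circ\varphi|_{\Omega'})\subseteq\Gamma_\varepsilon(h)\), localisation of minimal weak upper gradients plus outer regularity of the two finite Borel measures to pass from open to Borel sets (and then to general \(g\)), testing with the identity/inclusion map for \((2),(2')\Rightarrow(3)\), and the competitor \((\rho\circ\varphi)\rho_\varphi\) off a \(\Mod_p\)-null family for \((3)\Rightarrow(1)\). The only deviations are cosmetic (balls instead of general finite-measure open sets, and running the argument directly on closed sets viewed as subspaces rather than restricting \(\varphi\) as the paper does), and the small glossed steps — simple-function approximation for general \(g\), and the agreement of subspace and ambient modulus for curve families in a closed subset — are routine.
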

\begin{remark}\label{rmk:basic_ineq_phi}{\rm
Suppose that $\varphi \colon \X \rightarrow \Y$ is continuous and satisfies Proposition \ref{lemm:energydistortion} (3). Let \(\rho_\varphi\in\UG^p_\loc(\varphi)\) be a minimal weak upper gradient of \(\varphi\). Then it holds that
\begin{equation}\label{eq:basic_ineq_phi}
h\circ\varphi\,\rho_\varphi\in\mathcal L^p(\mm_\X),\;\;\;
\int_{\X}|h\circ\varphi\,\rho_\varphi|^p\,\d\mm_\X\leq K\int_{\Y}|h|^p\,\d\mm_\Y
\quad\text{ for every }h\in\mathcal L^p(\mm_\Y).
\end{equation}
Indeed, \eqref{eq:basic_ineq_phi} follows from \eqref{eq:inequalitygood:measure} via approximation by simple functions.
\fr}\end{remark}
\begin{proof}[Proof of Proposition \ref{lemm:energydistortion}]
"(1) $\Rightarrow$ (2)": Let $\Omega \subset \Y$ be open, $\Omega' = \varphi^{-1}(\Omega)$ and $h \in \D^{1,p}( \Omega, \rm{Z} )$. We apply Proposition \ref{lem:dirichlet} to the mapping $h|_{\Omega} \circ \varphi|_{\Omega'}$. Assumption (1) and $h \in \D^{1,p}( \Omega, \rm{Z} )$ imply $h \circ \varphi|_{ \Omega' } \in \D^{1,p}( \Omega', \rm{Z} )$, with
\begin{equation*}
	\int_{\Omega'}
		\rho_{ h \circ \varphi|_{ \Omega' } }^{p}
	\,\d\mm_{\X}
	\leq
	K\int_{\Omega}
		\rho_{ h|_{\Omega} }^{p}
	\,\d\mm_{\Y}
\end{equation*}
following from \eqref{eq:actuallimit}. Given that $h|_{U} \in \D^{1,p}( \Omega, \rm{Z} )$ for every open $U \subset \Omega$, the locality of the minimal weak upper gradients, Lemma \ref{lemm:locality:minimal}, shows
\begin{equation*}
	\int_{ \varphi^{-1}( U ) }
		\rho_{ h \circ \varphi|_{\Omega'} }^{p}
	\,\d\mm_{\X}
	\leq
	K\int_{U}
		\rho_{h}^{p}
	\,\d\mm_{\Y}
	\quad\text{for every open $U \subset \Omega$.}
\end{equation*}
Consequently, the measures $\nu(E) = \int_{ \varphi^{-1}(E) } \rho_{h \circ \varphi|_{\Omega'}}^{p} \,\d\mm_{\X}$ and $\mu(E) = K \int_{E} \rho_{h}^{p} \,\d\mm_{\Y}$ are Borel regular and finite measures satisfying $\nu( E ) \leq \mu( E )$ whenever $E \subset \Omega$ is open. The outer regularity of the measures implies the validity of $\nu(E) \leq \mu(E)$ for each Borel $E \subset \Omega$. This inequality implies  \eqref{eq:inequalitygood}. Indeed, given $g \rho_{ h }^{p} \in \mathcal{L}^{1}( \mm_{\Y}|_{\mathscr{B}( \Omega )} )$, there exists a pointwise increasing sequence of nonnegative simple functions $g_{n}$ such that $g_n \rho_{h}^{p}$ converges to $g \rho_{h}^{p}$ in $\mathcal{L}^{1}( \mm_{\Y}|_{\mathscr{B}( \Omega )} )$. The inequality $\nu \leq \mu$ and dominated convergence imply
\begin{equation*}
	\int_{ \Omega' } ( g \circ \varphi ) \rho_{ h \circ \varphi }^{p} \,\d\mm_{\X}
	=
	\int_{ \Omega }
		g
	\,\d\nu
	=
	\lim_{ n \rightarrow \infty }
	\int_{ \Omega } g_n \,\d\nu
	\leq
	\lim_{ n \rightarrow \infty }
	\int_{ \Omega } g_n \,\d\mu
	=
	\int_{ \Omega } g \,\d\mu.
\end{equation*}
This establishes \eqref{eq:inequalitygood}.

"(1) $\Rightarrow$ (2')": Let $F \subset \Y$ be closed and $F' = \varphi^{-1}(F)$. We equip $F'$ with the restricted measure $\mm_{\X}|_{ \mathscr{B}( F' ) }$. Then $\varphi|_{F'}$ satisfies (1) with the same constant $K$. Thus (2) applies to $\varphi|_{F'}$ and the set $\Omega = F$, which is relatively open in $F$. Hence (2') follows.

"(2) $\Rightarrow$ (3)": Let $\Omega \subset \Y$ satisfy $\mm_{\Y}( \Omega ) < \infty$ and $U = \varphi^{-1}( \Omega )$. We apply \eqref{eq:inequalitygood} to $h = \mathrm{id}_{\Omega}$, which implies \eqref{eq:inequalitygood:measure} for such $\Omega$. This also implies $\varphi|_{U} \in \D^{1,p}( U, \Y )$. Since $\X$ can be covered by such $U$, Lemma \ref{lemm:locality:dirichlet} yields $\varphi \in \D^{1,p}_{\loc}( \X, \Y )$. Clearly \eqref{eq:inequalitygood:measure} holds if $\mm_{\Y}( \Omega ) = \infty$ and $\Omega$ is open. The claim for arbitrary Borel sets follows from outer regularity.

"(2') $\Rightarrow$ (3)": Consider an open set $\Omega \subset \Y$, $U = \varphi^{-1}(\Omega)$, and an open set $\Omega_{1} \subset \Y$ for which $F_{1} \coloneqq \overline{ \Omega }_{1} \subset \Omega$ and $\mm_{\Y}( F_1 ) < \infty$. Denote $U_{1} = \varphi^{-1}( F_{1} )$. By arguing as in "(2) $\Rightarrow$ (3)", we conclude $\varphi|_{ U_{1} } \in \D^{1,p}( U_{1}, \Y )$ and
\begin{equation}\label{eq:intermediatestep}
	\int_{ \varphi^{-1}(\Omega_1) }
		\rho_{ \varphi|_{ \varphi^{-1}(\Omega_1) } }^{p}
	\,\d\mm_{\X}
	\leq
	\int_{ \varphi^{-1}(\Omega_1) }
		\rho_{ \varphi|_{ U_1 } }^{p}
	\,\d\mm_{\X}
	\leq
	K \mm_{\Y}( F_1 )
	\leq
	K \mm_{\Y}( \Omega ).
\end{equation}
Now consider a nested sequence of open sets $\Omega_{i} \subset \Y$ for which $\Omega_{i} \subset F_{i} \coloneqq \overline{ \Omega }_{i} \subset \Omega$ with $\mm_{\Y}( F_{i} ) < \infty$ and $\Omega = \bigcup_{ i = 1 }^{ \infty } \Omega_{i}$. Since $U = \bigcup_{ i = 1 }^{ \infty } \varphi^{-1}( \Omega_{i} )$, we conclude from Lemma \ref{lemm:locality:dirichlet} that $\varphi|_{U} \in \D^{1,p}_{\loc}( U, \Y )$. The locality of the minimal weak upper gradients and \eqref{eq:intermediatestep} imply
\begin{equation*}
	\int_{ U }
		\rho_{ \varphi|_{ U } }^{p}
	\,\d\mm_{\X}
	=
	\lim_{ i \rightarrow \infty }
	\int_{ \varphi^{-1}(\Omega_i) }
		\rho_{ \varphi|_{ \varphi^{-1}(\Omega_i) } }^{p}
	\,\d\mm_{\X}
	\leq
	K \mm_{\Y}( \Omega ).
\end{equation*}
By the arbitrariness of $\Omega$, we have $\varphi \in \D^{1,p}_{\loc}( \X, \Y )$, so $\rho_{ \varphi|_{ U } } = ( \rho_{ \varphi } )|_{U}$ $\mm_{\X}$-almost everywhere. Hence \eqref{eq:inequalitygood:measure} holds for open sets, and outer regularity yields the rest.

"(3) $\Rightarrow$ (1)": Fix a representative $\rho_{ \varphi }$. Let $\mathcal{N}$ denote the collection of absolutely continuous $\gamma \in \AC( \X )$ for which either $\varphi \circ \gamma$ is not absolutely continuous, $\int_{ \gamma } \rho_{  \varphi } \,\d s = \infty$, or $| (\varphi \circ \gamma)' |(t) \leq \rho_{ \varphi }( \gamma(t) ) | \gamma' |(t)$ does not hold almost everywhere in $I_{\gamma}$. We recall from Proposition \ref{lemm:localityupper} that $\Mod_{p}( \mathcal{N} ) = 0$. Let $\mathcal{N}'$ denote the collection of all rectifiable $\gamma \in \C( \X )$ for which the unit speed parametrization $\widetilde{\gamma}$ of $\gamma$ is an element of $\mathcal{N}$. We have $\Mod_{p}( \mathcal{N}' ) = 0$ since $\Mod_{p}( \mathcal{N} ) = 0$.

Let $\Gamma$ be a path family. If $\Mod_{p}( \varphi\Gamma ) = \infty$, we are done. Hence, we may assume $\Mod_{p}( \varphi\Gamma ) < \infty$. Consider an admissible function $\rho \in \mathcal{L}^{p}( \mm_{\Y} )$ for the path family $\varphi \Gamma$. Denote $\rho' = ( \rho \circ \varphi ) \rho_{\varphi}$. Then, for every $\gamma \in \Gamma \setminus \mathcal{N}'$, $1 \leq \int_{ \varphi \circ \gamma } \rho \,\d s \leq \int_{ \gamma } \rho'\,\d s.$ Hence, $\rho'$ is admissible for the path family $\Gamma \setminus \mathcal{N}'$, and we conclude from Remark \ref{rmk:basic_ineq_phi} that
\begin{equation*}
	\Mod_{p}( \Gamma )
	=
	\Mod_{p}( \Gamma \setminus \mathcal{N}' )
	\leq
	\int_{ \X } ( \rho' )^{p} \,\d\mm_{\X}
	\leq
	K \int_{\Y} \rho^{p} \,\d\mm_{\Y}.
\end{equation*}
Taking the infimum over such admissible $\rho$ shows (1).
\end{proof}
\subsection{Cotangent module}\label{ss:cotg_mod}
We recall the notion of \(L^p\)-normed \(L^\infty\)-module
introduced in \cite{Gig:18}:
\begin{definition}[Normed module]
Let \((\X,\sfd,\mm)\) be a metric measure space and let \(p\in(1,\infty)\).
Let \(\mathscr M\) be an algebraic module over the commutative ring \(L^\infty(\mm)\).
Then a \emph{pointwise norm} on \(\mathscr M\) is any map
\(|\cdot|\colon\mathscr M\to L^p(\mm)\) such that the following conditions
are verified (in the \(\mm\)-a.e.\ sense):
\begin{equation}\label{eq:pointwisenorm}
\begin{split}
|v|\geq 0,&\quad\text{ for every }v\in\mathscr M,
\text{ with equality if and only if }v=0,\\
|hv|=|h||v|,&\quad\text{ for every }v\in\mathscr M\text{ and }h\in L^\infty(\mm),\\
|v+w|\leq|v|+|w|,&\quad\text{ for every }v,w\in\mathscr M.
\end{split}
\end{equation}
We say that \(\mathscr M\) is an \emph{\(L^p(\mm)\)-normed \(L^\infty(\mm)\)-module}
provided it is a Banach space when endowed with the norm induced by \(|\cdot|\),
which is defined as
\[
{\|v\|}_{\mathscr M}\coloneqq\bigg(\int|v|^p\,\d\mm\bigg)^{1/p},
\quad\text{ for every }v\in\mathscr M.
\]
\end{definition}
A fundamental example of normed module is the cotangent module
over a metric measure space, which we are going to recall below.
This object was introduced by Gigli in \cite{Gig:18}, in the
case \(p=2\). The easy adaptation to arbitrary exponents
\(p\in(1,\infty)\) was obtained in \cite{Gig:Pas:19}.

We recall from Remark \ref{rem:minimalupper} that every equivalence class $f \in\S^{1,p}( \X )$ of Dirichlet functions has a uniquely determined equivalence class of minimal weak upper gradients $|Df| \in L^{p}( \mm_{\X} )$.
\begin{theorem}[Cotangent module]
Let \((\X,\sfd,\mm)\) be a metric measure space and let
\(p\in(1,\infty)\). Then there exists a unique pair \(\big(L^p(T^*\X),\d\big)\)
-- where \(L^p(T^*\X)\) is an \(L^p(\mm)\)-normed
\(L^\infty(\mm)\)-module and \(\d\colon\S^{1,p}(\X)\to L^p(T^*\X)\)
is a linear operator -- such that:
\begin{itemize}
\item[\(\rm i)\)] \(|\d f|=|Df|\) holds \(\mm\)-a.e.\ in \(\X\)
for every \(f\in\S^{1,p}(\X)\).
\item[\(\rm ii)\)] The set \(\big\{\sum_{i=1}^n h_i\,\d f_i\;\big|\;n\in\N,
\;(h_i)_{i=1}^n\subset L^\infty(\mm),\;(f_i)_{i=1}^n\subset\S^{1,p}(\X)\big\}\)
is dense in \(L^p(T^*\X)\).
\end{itemize}
We call \(L^p(T^*\X)\) the \emph{cotangent module} of \((\X,\sfd,\mm)\)
and \(\d\) the associated \emph{differential}.

Uniqueness is intended up to unique isomorphism, meaning that
if another pair \((\mathscr M,T)\) verifies the same properties,
then there exists a unique pointwise norm preserving, linear
bijection \(\Phi\colon L^p(T^*\X)\to\mathscr M\) such that
\(T=\Phi\circ\d\).
\end{theorem}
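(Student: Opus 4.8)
The plan is to follow the scheme of Gigli \cite{Gig:18} (extended to $p\in(1,\infty)$ in \cite{Gig:Pas:19}): build an explicit \emph{pre-cotangent module} out of formal differentials of Dirichlet functions, quotient out the region where the minimal weak upper gradient degenerates, complete with respect to the natural norm, and then read off i) and ii) from the construction. Uniqueness will follow because properties i)--ii), together with the module axioms \eqref{eq:pointwisenorm}, completely determine the pointwise norm on the (dense) generating set.

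\emph{Existence.} I would let $\mathrm{Pcm}$ be the set of all pairs $\big((f_i)_{i=1}^n,(E_i)_{i=1}^n\big)$, with $(E_i)_{i=1}^n$ a finite Borel partition of $\X$ and $f_i\in\S^{1,p}(\X)$, and declare $\big((f_i),(E_i)\big)\sim\big((g_j),(F_j)\big)$ when $|D(f_i-g_j)|=0$ $\mm$-a.e.\ on $E_i\cap F_j$ for all $i,j$; here $f_i-g_j\in\S^{1,p}(\X)$ and, by subadditivity of minimal weak upper gradients (an immediate consequence of Proposition \ref{lemm:localityupper}), one gets $|Df_i|=|Dg_j|$ $\mm$-a.e.\ on $E_i\cap F_j$, which gives reflexivity, symmetry and transitivity of $\sim$. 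On $\mathrm{Pcm}/\!\!\sim$ I define, by passing to common refinements, a real vector space structure, an ${\sf Sf}(\X)$-module structure, the operator $\d f\coloneqq[((f),(\X))]$, and the pointwise norm $\big|[((f_i),(E_i))]\big|\coloneqq\sum_i\1_{E_i}^\mm|Df_i|\in L^p(\mm)$. Using subadditivity and $1$-homogeneity of $f\mapsto|Df|$ together with the locality recorded in Lemma \ref{lem:differential} and the well-posedness of $|Df|$ from Remark \ref{rem:minimalupper}, all of this is well posed, satisfies \eqref{eq:pointwisenorm} for $h\in{\sf Sf}(\X)$, and gives $|\d f|=|Df|$ by construction; moreover $v\mapsto(\int|v|^p\,\d\mm)^{1/p}$ is a genuine norm, since its vanishing forces $|Df_i|=0$ $\mm$-a.e.\ on each $E_i$, i.e.\ $v=0$. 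I then take $L^p(T^*\X)$ to be the abstract completion: multiplication by a fixed $h\in{\sf Sf}(\X)$ is $\|\cdot\|$-bounded with operator norm $\le\|h\|_{L^\infty(\mm)}$, hence extends to the completion, and letting $h$ range over simple functions converging in $L^\infty(\mm)$ to an arbitrary $h$ defines the full $L^\infty(\mm)$-action; the pointwise norm extends by continuity (it is $1$-Lipschitz into $L^p(\mm)$ because $\big||v|-|w|\big|\le|v-w|$ $\mm$-a.e.), and one checks \eqref{eq:pointwisenorm} is preserved in the limit. Property i) is then immediate, and ii) holds because the finite-partition elements $\sum_i\1_{E_i}^\mm\,\d f_i$ -- a special case of the generating combinations -- are dense in $\mathrm{Pcm}/\!\!\sim$, hence in $L^p(T^*\X)$.

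\emph{Uniqueness.} Given another pair $(\mathscr M,T)$ with the same properties, I would set $\Phi\big(\sum_i h_i\,\d f_i\big)\coloneqq\sum_i h_i\,Tf_i$ on the generating set $\mathscr G\subset L^p(T^*\X)$. The key point is that every element of $\mathscr G$ with coefficients $h_i\in{\sf Sf}(\X)$ can be rewritten, after refining partitions and using linearity of $\d$, as $\sum_j\1_{B_j}^\mm\,\d g_j$ with $g_j\in\S^{1,p}(\X)$, and then \eqref{eq:pointwisenorm} together with property i) forces $\big|\sum_j\1_{B_j}^\mm\,\d g_j\big|=\sum_j\1_{B_j}^\mm|Dg_j|$; the identical computation holds in $\mathscr M$ for $\sum_j\1_{B_j}^\mm\,Tg_j$. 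Hence $\Phi$ is well posed and pointwise-norm preserving on simple-coefficient combinations (so in particular $\sum_j\1_{B_j}^\mm\,Tg_j=0$ in $\mathscr M$ whenever the corresponding element of $L^p(T^*\X)$ vanishes, by the first axiom in \eqref{eq:pointwisenorm}); approximating arbitrary $h_i\in L^\infty(\mm)$ by simple functions and using continuity of the $L^\infty(\mm)$-action, $\Phi$ extends to a pointwise-norm preserving linear map on $\overline{\mathscr G}=L^p(T^*\X)$, which is onto by property ii) for $\mathscr M$. Its uniqueness is forced because $T=\Phi\circ\d$ prescribes $\Phi$ on the dense set $\mathscr G$.

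\emph{Main obstacle.} The crux is not a single deep estimate but the careful check that the pre-cotangent construction is internally consistent: that $\sim$ is an equivalence relation and that the vector-space operations, the ${\sf Sf}(\X)$-action, and especially the pointwise norm descend to $\mathrm{Pcm}/\!\!\sim$ and obey \eqref{eq:pointwisenorm}. All of this is powered by exactly two structural facts already available above -- locality (Lemma \ref{lem:differential}, equivalently the well-posedness of $|Df|$ in Remark \ref{rem:minimalupper}) and subadditivity together with $1$-homogeneity of $f\mapsto|Df|$ (from Proposition \ref{lemm:localityupper}) -- so the real labour is bookkeeping with Borel partitions. A secondary but also routine point requiring attention is the passage to the completion: promoting the ${\sf Sf}(\X)$-module to a genuine $L^\infty(\mm)$-module and extending the pointwise norm while preserving every axiom, so that $L^p(T^*\X)$ is indeed an $L^p(\mm)$-normed $L^\infty(\mm)$-module.
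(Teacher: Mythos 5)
Your proposal is correct and follows essentially the same route the paper relies on: the theorem is quoted from \cite{Gig:18} (and \cite{Gig:Pas:19} for general $p$), and your construction — pre-cotangent module of partition/Sobolev-function pairs, quotient by vanishing of $|D(f_i-g_j)|$ on overlaps, completion, extension of the ${\sf Sf}(\X)$-action to $L^\infty(\mm)$, and uniqueness via density of the generated elements — is precisely Gigli's argument. Only the final uniqueness line deserves one extra remark: $T=\Phi\circ\d$ alone pins down $\Phi$ only on the differentials, so to determine it on $h\,\d f$ you should note that any pointwise norm preserving linear map automatically satisfies $\Phi(\1_E v)=\1_E\Phi(v)$ (hence is $L^\infty(\mm)$-linear by approximation and continuity), which is exactly how the standard proof closes this step.
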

\begin{remark}{\rm
In \cite{Gig:Pas:19} the differential $\d$ was only defined on $\S^{1,p}( \X ) \cap L^{p}_{\loc}( \mm_{\X} )$. However, if $u \in \D^{1,p}( \X )$ represents an equivalence class of $f \in \S^{1,p}( \X )$, observe that $u_{k} = \max\left\{-k, \min\left\{u,k\right\} \right\} \in \D^{1,p}( \X ) \cap L^{p}_{\loc}( \mm_{\X} )$ for every $k \in \mathbb{N}$. If we set $f_{k} = \pi_{\X}( u_{k} )$, the differential $\d f_{k}$ is well-defined and
\begin{equation*}
	| \d f_{k} - \d f_{n} |
	=
	\rho_{ u_k - u_n }
	\leq
	\rho_{ u_{k} - u } + \rho_{ u - u_{n} }
	\quad\text{$\mm_{\X}$-almost everywhere}.
\end{equation*}
As $k \rightarrow \infty$, $\rho_{ u_{k} - u } \rightarrow 0$ in $L^{p}( \mm_{\X} )$ due to \cite[Proposition 7.1.18]{HKST:15}. Hence $( \d f_k )_{ k = 1 }^{ \infty }$ is a Cauchy sequence with some limit $\omega \in L^{p}( T^{*}\X )$. It follows from Lemma \ref{lem:differential} that $\1_{ \left\{ |u| \leq k \right\} } \omega = \d f_{k}$ for every $k \in \mathbb{N}$, hence $| D f | = | \omega |$, and we denote $\d f = \omega$.
\fr}\end{remark}
Occasionally, given a function \(f\in\D^{1,p}(\X)\), we will
write \(\d f\in L^p(T^*\X)\) in place of \(\d\big(\pi_\X(f)\big)\).
\begin{remark}\label{rmk:implic_iH}{\rm
It follows from the proof of \cite[Proposition 2.2.10]{Gig:18}
that if \(L^p(T^*\X)\) is reflexive, then \(\W^{1,p}(\X)\) is
reflexive. The validity of the converse
implication is not known.
Moreover, there do exist metric measure spaces
(see \cite{Amb:Col:DiMa:15}) whose Sobolev space
\(\W^{1,p}(\X)\) is not reflexive, and thus a fortiori
\(L^p(T^*\X)\) is not reflexive as well.
\fr}\end{remark}

The \emph{dual} of a \(L^p(\mm)\)-normed \(L^\infty(\mm)\)-module \(\mathscr M\)
is the \(L^q(\mm)\)-normed \(L^\infty(\mm)\)-module \(\mathscr M^*\) (where
\(\frac{1}{p}+\frac{1}{q}=1\)) which is defined as follows: \(\mathscr M^*\) is the space
of all \(L^\infty(\mm)\)-linear continuous maps \(T\colon\mathscr M\to L^1(\mm)\)
endowed with the pointwise norm
\begin{equation}\label{eq:def_dual_ptwse_norm}
|T|\coloneqq\underset{\substack{v\in\mathscr M:\\|v|\leq 1\,\mm\text{-a.e.}}}
{\rm ess\,sup}\big|\langle T,v\rangle\big|,\quad\text{ for every }T\in\mathscr M^*.
\end{equation}
The \emph{tangent module} of \((\X,\sfd,\mm)\) is then defined as
\(L^q(T\X)\coloneqq L^p(T^*\X)^*\). However, in order to keep a notational consistency
with the classical Riemannian framework, the duality pairing between \(v\in L^p(T^*\X)\)
and \(\omega\in L^q(T\X)\) will be denoted by \(\langle\omega,v\rangle\) in place of
\(\langle v,\omega\rangle\).
\medskip

In Theorem \ref{thm:def_diff} we will also deal with another notion of normed module, where
the integrability assumption is removed. Given a metric measure space \((\X,\sfd,\mm)\), we
denote by \(L^0(\mm)\) the space of all real-valued Borel functions on \(\X\) considered up
to \(\mm\)-a.e.\ equality. It holds that \(L^0(\mm)\) is a topological vector space when endowed with the
distance \(\sfd_{L^0(\mm)}(f,g)\coloneqq\int\min\{|f-g|,1\}\,\d\tilde\mm\),
where \(\tilde\mm\) is any Borel probability measure on \(\X\) such that
\(\mm\ll\tilde\mm\ll\mm\). Following \cite[Definition 2.6]{Gig:17}, we say that
an algebraic module \(\mathscr M^0\) over \(L^0(\mm)\) is an \emph{\(L^0(\mm)\)-normed
\(L^0(\mm)\)-module} provided it is endowed with a pointwise norm
\(|\cdot|\colon\mathscr M^0\to L^0(\mm)\) satisfying the axioms \eqref{eq:pointwisenorm}
for every \(v,w\in\mathscr M^0\), \(h\in L^0(\mm)\) and such that the distance
\(\sfd_{\mathscr M^0}(v,w)\coloneqq\sfd_{L^0(\mm)}(|v-w|,0)\) is complete.
\medskip

As proved in \cite[Theorem/Definition 1.7]{Gig:17}, every \(L^p(\mm)\)-normed
\(L^\infty(\mm)\)-module can be canonically `extended' to a \(L^0(\mm)\)-normed
\(L^0(\mm)\)-module, as we now recall. Given an \(L^p(\mm)\)-normed
\(L^\infty(\mm)\)-module \(\mathscr M\), there exists a unique pair
\((\mathscr M^0,\iota)\), where \(\mathscr M^0\) is an \(L^0(\mm)\)-normed
\(L^0(\mm)\)-module (called the \emph{\(L^0\)-completion} of \(\mathscr M\))
and \(\iota\colon\mathscr M\to\mathscr M^0\) is a linear map that preserves the
pointwise norm and has dense image. Then we define \(L^0(T^*\X)\) and \(L^0(T\X)\)
as the \(L^0\)-completions of \(L^p(T^*\X)\) and \(L^q(T\X)\), respectively.
Notice that \(L^0(T^*\X)\) and \(L^0(T\X)\) depend on the fixed exponent \(p\),
even though for the sake of brevity such dependence is omitted from our notation.
\medskip

The dual \((\mathscr M^0)^*\) of an \(L^0(\mm)\)-normed \(L^0(\mm)\)-module \(\mathscr M\)
is the space of \(L^0(\mm)\)-linear continuous maps \(T\colon\mathscr M^0\to L^0(\mm)\),
endowed with the natural pointwise norm defined in analogy with
\eqref{eq:def_dual_ptwse_norm}. Another useful construction is the following (we refer
to \cite[last paragraph of Section 2]{Gig:Pas:Sou:20} for the details). Given any set
\(E\in\mathscr B(\X)\), we have a natural \emph{extension} operator
\({\rm ext}\colon L^0(\mm|_E)\to L^0(\mm)\), sending each \(f\in L^0(\mm|_E)\) to the
function coinciding with \(f\) \(\mm\)-a.e.\ on \(E\), with \(0\) \(\mm\)-a.e.\ on
\(\X\setminus E\). A similar procedure allows to extend `by zero' every
\(L^0(\mm|_E)\)-normed \(L^0(\mm|_E)\)-module \(\mathscr M^0\) to an \(L^0(\mm)\)-normed
\(L^0(\mm)\)-module \({\rm Ext}(\mathscr M^0)\). We denote by
\({\rm ext}\colon\mathscr M^0\to{\rm Ext}(\mathscr M^0)\) the associated extension operator.
It is easy to verify that the dual of \({\rm Ext}(\mathscr M^0)\) can be identified with
\({\rm Ext}(\mathscr N^0)\), where \(\mathscr N^0\coloneqq(\mathscr M^0)^*\).
Finally, we need the following notion of \emph{pullback} of a \(L^0\)-normed \(L^0\)-module:
\begin{theorem}[Pullback of a normed module]\label{thm:pullback_NMod}
Let \((\X,\sfd_\X,\mm_\X)\), \((\Y,\sfd_\Y,\mm_\Y)\) be metric measure spaces. Let
\(\varphi\colon\X\to\Y\) be a Borel measurable map such that \(\varphi_*\mm_\X\ll\mm_\Y\).
Let \(\mathscr M^0\) be an \(L^0(\mm_\Y)\)-normed \(L^0(\mm_\Y)\)-module. Then there exists
a unique pair \(\big(\varphi^*\mathscr M^0,[\varphi^*\#]\big)\), where
\(\varphi^*\mathscr M^0\) is an \(L^0(\mm_\X)\)-normed \(L^0(\mm_\X)\)-module, while
\([\varphi^*\#]\colon\mathscr M^0\to\varphi^*\mathscr M^0\) is a linear operator such that:
\begin{itemize}
\item[\(\rm i)\)] \(\big|[\varphi^*v]\big|=|v|\circ\varphi\) holds \(\mm_\X\)-a.e.\ for
every \(v\in\mathscr M^0\).
\item[\(\rm ii)\)] The \(L^0(\mm_\X)\)-linear combinations of the elements of
\(\big\{[\varphi^*v]\,:\,v\in\mathscr M^0\big\}\) are dense in \(\varphi^*\mathscr M^0\).
\end{itemize}
\end{theorem}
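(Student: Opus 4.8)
The plan is to reduce everything to a single structural fact and then run the standard ``generators and relations'' construction. The fact is: since $\varphi_*\mm_\X\ll\mm_\Y$, the assignment $f\mapsto f\circ\varphi$ is a well-defined, continuous, unital ring homomorphism $L^0(\mm_\Y)\to L^0(\mm_\X)$; indeed, if $g,g'\colon\Y\to\R$ are Borel representatives of the same class, then $\varphi^{-1}(\{g\neq g'\})$ is $\varphi_*\mm_\X$-null, hence $\mm_\X$-null, and continuity with respect to the $L^0$-distances follows in the same way. In particular $|v|\circ\varphi\in L^0(\mm_\X)$ is unambiguously defined for every $v\in\mathscr M^0$, and $\big||v|-|w|\big|\circ\varphi\leq|v-w|\circ\varphi$ $\mm_\X$-a.e. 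Establishing this is the one place where the hypothesis on $\varphi$ enters, and I would carry it out first.

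For uniqueness, suppose $(\mathscr N^0,T)$ is a second pair satisfying (i)--(ii). On a simple combination $\xi=\sum_{i=1}^n\1_{E_i}^{\mm_\X}[\varphi^* v_i]$, with $(E_i)_i$ a Borel partition of $\X$, property (i) and the module axioms force $|\xi|=\sum_i\1_{E_i}^{\mm_\X}(|v_i|\circ\varphi)$, and the identical computation in $\mathscr N^0$ gives the same expression for $\big|\sum_i\1_{E_i}^{\mm_\X}Tv_i\big|$. Hence $\sum_i\1_{E_i}^{\mm_\X}[\varphi^* v_i]\mapsto\sum_i\1_{E_i}^{\mm_\X}Tv_i$ is well-defined on simple combinations, $L^0$-linear, and pointwise norm preserving there; since by (ii) such combinations are dense in $\varphi^*\mathscr M^0$, it extends to a pointwise norm preserving, $L^0(\mm_\X)$-linear map $\Phi\colon\varphi^*\mathscr M^0\to\mathscr N^0$, which is injective because it is an isometry and surjective because its image is closed and dense, and $\Phi([\varphi^* v])=Tv$ pins it down uniquely.

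For existence I would build a concrete model. Let $\mathcal S$ be the set of finite formal sums $\sum_{i=1}^n\1_{E_i}^{\mm_\X}[\varphi^* v_i]$ over Borel partitions $(E_i)_{i=1}^n$ of $\X$ and vectors $v_i\in\mathscr M^0$, with addition defined by passing to a common refinement of the underlying partitions and adding the associated vectors, and with multiplication by $\lambda=\sum_k c_k\1_{A_k}^{\mm_\X}\in{\sf Sf}(\X)$ defined by $\lambda\cdot\sum_i\1_{E_i}^{\mm_\X}[\varphi^* v_i]\coloneqq\sum_{i,k}\1_{E_i\cap A_k}^{\mm_\X}[\varphi^*(c_k v_i)]$. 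Quotient $\mathcal S$ by the relation $\sum_i\1_{E_i}^{\mm_\X}[\varphi^* v_i]\sim\sum_j\1_{F_j}^{\mm_\X}[\varphi^* w_j]$ defined to hold iff $|v_i-w_j|\circ\varphi=0$ $\mm_\X$-a.e.\ on $E_i\cap F_j$ for all $i,j$ (meaningful by the preliminary fact); one checks $\sim$ is an equivalence relation compatible with the operations, so $\mathcal V\coloneqq\mathcal S/\!\sim$ is an ${\sf Sf}(\X)$-module. Equip it with
\[
	\Big|\sum_i\1_{E_i}^{\mm_\X}[\varphi^* v_i]\Big|\coloneqq\sum_i\1_{E_i}^{\mm_\X}\big(|v_i|\circ\varphi\big)\in L^0(\mm_\X),
\]
which descends to $\mathcal V$ because $\big||v_i|-|w_j|\big|\circ\varphi=0$ $\mm_\X$-a.e.\ on $E_i\cap F_j$ when $\sim$ holds, and which satisfies \eqref{eq:pointwisenorm} over ${\sf Sf}(\X)$ (the ``equality iff zero'' clause from $|v|=|v-0|$ and the definition of $\sim$; the triangle inequality from $|v_i+w_j|\circ\varphi\leq|v_i|\circ\varphi+|w_j|\circ\varphi$ on a common refinement). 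I would then let $\varphi^*\mathscr M^0$ be the completion of $\mathcal V$ for $\sfd_{\mathcal V}(\xi,\zeta)\coloneqq\sfd_{L^0(\mm_\X)}(|\xi-\zeta|,0)$: the operations and the pointwise norm are continuous for $\sfd_{\mathcal V}$, hence extend to the completion with the axioms preserved, and---exactly as in the $L^0$-completion procedure recalled before the statement---the ${\sf Sf}(\X)$-scalar multiplication extends to an $L^0(\mm_\X)$-scalar multiplication by approximating arbitrary $h\in L^0(\mm_\X)$ by simple functions. Finally, define $[\varphi^* v]$ to be the class of $\1_\X^{\mm_\X}[\varphi^* v]$; it is additive and $\R$-homogeneous in $v$ directly from the definitions of the operations on $\mathcal S$ (with trivial partitions).

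Properties (i) and (ii) are then immediate: $|[\varphi^* v]|=\1_\X^{\mm_\X}(|v|\circ\varphi)=|v|\circ\varphi$ by the definition of the pointwise norm, which persists in the completion, and every element of the dense subspace $\mathcal V$ is by construction an ${\sf Sf}(\X)$-linear---hence $L^0(\mm_\X)$-linear---combination of the $[\varphi^* v]$'s. I expect the only genuinely substantive point to be the bundle of well-definedness checks in the existence step: that $\sim$ respects addition and ${\sf Sf}(\X)$-multiplication, that the pointwise norm passes to $\mathcal V$ and obeys \eqref{eq:pointwisenorm}, and that the module structure extends consistently from simple-function coefficients to $L^0(\mm_\X)$-coefficients after completion. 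Each of these reduces to manipulating the identity $|fg|\circ\varphi=(|f|\circ\varphi)(|g|\circ\varphi)$ together with the monotonicity and continuity of $f\mapsto f\circ\varphi$, so the order of business is: establish the preliminary fact, verify well-definedness of the quotient module and its pointwise norm, complete, and extend the scalars.
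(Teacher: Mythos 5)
Your construction is correct, but note that there is nothing in the paper to compare it against: Theorem \ref{thm:pullback_NMod} is stated in the preliminaries as a recalled result, with no proof given (it is Gigli's pullback construction for normed modules, cf.\ \cite{Gig:17,Gig:18,Gig:Pas:Sou:20}). What you wrote is essentially the canonical proof from those sources: the hypothesis \(\varphi_*\mm_\X\ll\mm_\Y\) makes pre-composition \(f\mapsto f\circ\varphi\) a well-defined, continuous operation \(L^0(\mm_\Y)\to L^0(\mm_\X)\); one then builds the module from partition-indexed formal sums \(\sum_i\1_{E_i}^{\mm_\X}[\varphi^*v_i]\), quotients by the kernel of the natural pointwise norm, completes with respect to the induced \(L^0\)-distance, and extends the scalar multiplication from \({\sf Sf}(\X)\) to \(L^0(\mm_\X)\) by density; uniqueness is the usual density-plus-pointwise-isometry argument.

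Two points you gloss over and should make explicit, though both are routine. First, property ii) speaks of \(L^0(\mm_\X)\)-linear combinations, whereas your uniqueness argument and your density claim work with combinations whose coefficients are indicators of a Borel partition; the bridge is the approximation of \(L^0\)-coefficients by simple functions, the continuity of the module operations for the \(L^0\)-distance, and the identity \(\sum_i c_i[\varphi^*v_i]=[\varphi^*(\sum_i c_iv_i)]\) on each cell of a common refinement, which converts a simple-coefficient combination into a partition-indexed one. Second, the action of \(\lambda\in{\sf Sf}(\X)\) on your formal sums must be shown independent of the chosen representation \(\lambda=\sum_k c_k\1_{A_k}^{\mm_\X}\); this holds because two representations of the same class have equal constants on every intersection cell of positive measure, so the resulting elements are \(\sim\)-equivalent. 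With these remarks inserted, the argument is complete and coincides with the standard one.
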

Observe that in item i) of Theorem \ref{thm:pullback_NMod} the element
\(|v|\circ\varphi\in L^0(\mm_\X)\) is well-posed thanks to the assumption
\(\varphi_*\mm_\X\ll\mm_\Y\), which ensures that if two given Borel functions
\(f,g\colon\X\to\R\) agree \(\mm_\Y\)-a.e., then \(f\circ\varphi=g\circ\varphi\)
holds \(\mm_\X\)-a.e..
\medskip

Finally, we recall some necessary terminology needed in the Subsection \ref{sec:necessary_cond}, and refer the interested reader \cite[Section 1.4]{Gig:18} 
for further reading on this topic.

Let \((\X,\sfd,\mm)\) be a metric measure space and \(\mathscr M\) an \(L^p(\mm)\)-normed \(L^\infty(\mm)\)-module
for some \(p\in(1,\infty)\). Then we say that elements $\omega_{1}, \omega_{2}, \dots, \omega_{k} \in\mathscr M$ are \emph{independent} in a Borel set $E \subset \X$ if
\begin{equation*}
	\1_{E} \sum_{ i = 1 }^{ k } f_{i}\omega_{i} = 0 \in\mathscr M
\end{equation*}
for $f_{1}, f_2, \dots, f_{k} \in L^{\infty}( \mm )$ only if $f_{1}, f_{2}, \dots, f_{k} = 0$ $\mm$-almost everywhere in $E$.

Let $E \subset \X$ be Borel and $\omega_{1}, \omega_{2}, \dots, \omega_{k} \in \mathscr M$, for finite $k = 1, 2, \dots$. The \emph{span in $E$} of $\omega_{1}, \omega_{2}, \dots, \omega_{k}$, denoted by $\textrm{Span}_{E}( \omega_1, \omega_2, \dots, \omega_{k} )$ is the collection of all $\omega \in \mathscr M$ such that
\begin{equation*}
\omega = \sum_{ i = 1 }^{ \infty } \1_{ E_{i} } f_{i} \omega_{ j_i }
\end{equation*}
for some Borel partition $( E_{i} )_{ i = 1 }^{ \infty }$ of $E$, $( f_{i} )_{ i = 1 }^{ \infty } \subset L^{\infty}( \mm )$, and $j_{i} \in \left\{1,2,\dots,k\right\}$ for each $i \in \mathbb{N}$.

We say that $\omega_{1}, \omega_{2}, \dots, \omega_{k} \in\mathscr M$ form a \emph{basis} for \(\mathscr M\) in $E$ if they are independent in $E$ and $\1_{E}\omega \in \textrm{Span}_{E}( \omega_1, \omega_2, \dots, \omega_{k} )$ for every $\omega \in \mathscr M$. In this case, we say that $\1_{E}\mathscr M$ is \emph{$k$-dimensional}. If $\1_{E}\omega = 0$ for every $\omega \in \mathscr M$, we say that \(\1_{E}\mathscr M\) is \emph{$0$-dimensional}. Lastly, we say that $\1_{E}\mathscr M$ is \emph{$\infty$-dimensional} if \(\1_F\mathscr M\) is not $k$-dimensional for any finite $k$ and any \(F\subset E\) with \(\mm(F)>0\).

We recall the existence of a \emph{dimensional decomposition} $(D_{i})_{ i= 1 }^{ \infty }$ of $\mathscr M$: this means that $( D_i )_{ i = 0 }^{ \infty }$ is a Borel decomposition of $\X$ and \(\1_{D_i}\mathscr M\) is $i$-dimensional, for every $i$. Such a decomposition always exists \cite[Proposition 1.4.5]{Gig:18}.
\section{Main results}\label{sec:main}
\subsection{Pullback of maps with bounded outer dilatation}
\label{ss:pullback}
The aim of this subsection is to construct a pullback operator
associated with \(\varphi\), acting at the level of cotangent modules.
Before doing so, we prove some auxiliary results concerning the
properties of the pre-composition with \(\varphi\).
\medskip

Hereafter, our standing assumptions are the following:
\begin{itemize}
\item
\((\X,\sfd_\X,\mm_\X)\) and \((\Y,\sfd_\Y,\mm_\Y)\)
are metric measure spaces,
\item \(p\in(1,\infty)\) is a fixed exponent,
\item \(\varphi\colon\X\to\Y\) is a mapping with bounded outer dilatation with $K_{O}( \varphi ) \leq K$.
\end{itemize}
\begin{lemma}\label{cor:Dirichlet}
Let $\varphi \colon \X \rightarrow \Y$ be a mapping with bounded outer dilatation and $p \in (1,\infty)$. If $h_{1}, h_{2} \in \D^{1,p}( \Y )$, then $h_{1} \circ \varphi, h_{2} \circ \varphi, ( h_{1} - h_{2} ) \circ \varphi \in \D^{1,p}( \X )$. Moreover, if $E \subset \left\{ h_{1} = h_{2} \right\} \cup \left\{ \rho_{ h_1 - h_{2} } = 0 \right\} \cup N_{0}$ is a Borel set, where $\mm_{\Y}( N_{0} ) = 0$, then $\rho_{ h_{1} \circ \varphi - h_{2} \circ \varphi } = 0$ $\mm_{\X}$-almost everywhere in $\varphi^{-1}(E)$.
\end{lemma}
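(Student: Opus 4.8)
The plan is to reduce the claim about $\varphi^{-1}(E)$ to the three special cases $E\subset\{h_1=h_2\}$, $E\subset\{\rho_{h_1-h_2}=0\}$, and $E\subset N_0$ with $\mm_\Y(N_0)=0$, since a general Borel set $E$ as in the statement is a union of three such pieces and $\rho_{h_1\circ\varphi-h_2\circ\varphi}=0$ holds $\mm_\X$-a.e.\ on a union of sets as soon as it holds on each. For the first two assertions of the lemma (that $h_1\circ\varphi$, $h_2\circ\varphi$, and $(h_1-h_2)\circ\varphi$ lie in $\D^{1,p}(\X)$), I would invoke Proposition \ref{lemm:energydistortion}: since $\varphi$ has bounded outer dilatation, condition (1) there holds, hence condition (2) with $\Omega=\Y$ gives that $h\circ\varphi\in\D^{1,p}(\X)$ with $\rho_{h\circ\varphi}\leq(\rho_h\circ\varphi)\,\rho_\varphi$ $\mm_\X$-a.e.\ for any $h\in\D^{1,p}(\Y)$; apply this to $h=h_1$, $h=h_2$, and $h=h_1-h_2$ (noting $h_1-h_2\in\D^{1,p}(\Y)$ by Remark \ref{rem:summation}).

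For the quantitative part, first take $E\subset\{h_1=h_2\}$ Borel. Then $h_1\circ\varphi=h_2\circ\varphi$ on $\varphi^{-1}(E)$, so $h_1\circ\varphi-h_2\circ\varphi$ vanishes on the Borel set $\varphi^{-1}(E)$; since $h_1\circ\varphi,h_2\circ\varphi\in\D^{1,p}(\X)$, Lemma \ref{lem:differential} (applied with $\mathbb V=\R$, $u_i=h_i\circ\varphi$) yields $\rho_{h_1\circ\varphi-h_2\circ\varphi}=0$ $\mm_\X$-a.e.\ in $\varphi^{-1}(E)$. Next take $E\subset\{\rho_{h_1-h_2}=0\}$ Borel. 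Setting $h=h_1-h_2\in\D^{1,p}(\Y)$, the inequality $\rho_{h\circ\varphi}\leq(\rho_h\circ\varphi)\,\rho_\varphi$ from Proposition \ref{lemm:energydistortion}(2) gives $\rho_{h_1\circ\varphi-h_2\circ\varphi}\leq(\rho_{h_1-h_2}\circ\varphi)\,\rho_\varphi=0$ $\mm_\X$-a.e.\ on $\varphi^{-1}(E)$, because $\rho_{h_1-h_2}\circ\varphi=0$ $\mm_\X$-a.e.\ on $\varphi^{-1}(\{\rho_{h_1-h_2}=0\})$. Finally take $E\subset N_0$ with $\mm_\Y(N_0)=0$. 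Here I use that $\varphi$ has bounded outer dilatation to deduce $\varphi_*\mm_\X\ll\mm_\Y$: indeed, by Proposition \ref{lemm:energydistortion}(3) and Remark \ref{rmk:basic_ineq_phi}, applying \eqref{eq:basic_ineq_phi} (or directly \eqref{eq:inequalitygood:measure}) shows $\mm_\X(\varphi^{-1}(N_0))\le$ (a multiple of) $\mm_\Y(N_0)=0$ at least after intersecting with sets of finite measure and using that $\rho_\varphi>0$ fails to be an obstacle — more carefully, \eqref{eq:inequalitygood:measure} gives $\int_{\varphi^{-1}(N_0)}\rho_\varphi^p\,\d\mm_\X\le K\mm_\Y(N_0)=0$, so $\rho_\varphi=0$ $\mm_\X$-a.e.\ on $\varphi^{-1}(N_0)$; combined with $\rho_{h_1\circ\varphi-h_2\circ\varphi}\le(\rho_{h_1-h_2}\circ\varphi)\rho_\varphi$ this already forces $\rho_{h_1\circ\varphi-h_2\circ\varphi}=0$ $\mm_\X$-a.e.\ on $\varphi^{-1}(N_0)$ without needing absolute continuity of $\varphi_*\mm_\X$ at all.

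Assembling the three cases: given $E$ as in the hypothesis, write $E=E_1\cup E_2\cup E_3$ with $E_1=E\cap\{h_1=h_2\}$, $E_2=E\cap(\{\rho_{h_1-h_2}=0\}\setminus\{h_1=h_2\})$, $E_3=E\cap N_0\setminus(E_1\cup E_2)$; each is Borel, $\varphi^{-1}(E)=\varphi^{-1}(E_1)\cup\varphi^{-1}(E_2)\cup\varphi^{-1}(E_3)$, and on each preimage $\rho_{h_1\circ\varphi-h_2\circ\varphi}=0$ $\mm_\X$-a.e.\ by the corresponding case, so the same holds on $\varphi^{-1}(E)$. I expect the only mild subtlety to be bookkeeping of the representatives (the statement should be read $\mm_\X$-a.e., and $\rho_{h_1-h_2}\circ\varphi$ is well-defined $\mm_\X$-a.e.\ once one fixes Borel representatives, with the ambiguity harmless because all the identities are $\mm_\X$-a.e.); the genuine content is entirely carried by Proposition \ref{lemm:energydistortion} and Lemma \ref{lem:differential}, so no serious obstacle arises.
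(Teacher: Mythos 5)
Your argument is correct in substance and rests on the same two ingredients as the paper, namely Proposition \ref{lemm:energydistortion}(2) and Lemma \ref{lem:differential}; the difference is organizational. The paper applies Lemma \ref{lem:differential} once on the $\Y$ side (to $h_1,h_2$) to conclude $\rho_{h_1-h_2}=0$ $\mm_\Y$-a.e.\ on all of $E$ (the null part $N_0$ being irrelevant for an a.e.\ statement), and then transfers this through $\varphi$ in a single stroke by the weighted integral inequality \eqref{eq:inequalitygood} with $g=\1_E$; you instead split $E$ into three pieces and, for the $\{h_1=h_2\}$ piece, apply Lemma \ref{lem:differential} on the $\X$ side to $h_i\circ\varphi$, which is equally valid. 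One attribution should be repaired: the pointwise bound $\rho_{h\circ\varphi}\le(\rho_h\circ\varphi)\,\rho_\varphi$ $\mm_\X$-a.e.\ is not part of the statement of Proposition \ref{lemm:energydistortion}(2), which only gives the weighted integral inequality \eqref{eq:inequalitygood}; in the paper the pointwise estimate is established separately (Lemma \ref{lemm:pullbackgradient}, whose proof is independent of the present lemma, so citing it causes no circularity). Alternatively, you can avoid the pointwise bound altogether: for $E_2\subset\{\rho_{h_1-h_2}=0\}$ take $g=\1_{E_2}$ in \eqref{eq:inequalitygood} applied to $h=h_1-h_2$, and for $E_3\subset N_0$ take $g=\1_{N_0}$ (admissible since $\1_{N_0}\rho_{h_1-h_2}^p$ is integrable with zero integral), which gives $\int_{\varphi^{-1}(E_i)}\rho_{(h_1-h_2)\circ\varphi}^p\,\d\mm_\X=0$ directly --- this is exactly the paper's mechanism and renders your appeal to \eqref{eq:inequalitygood:measure} in the null-set case unnecessary. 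With that repair, everything you write goes through, including the representative bookkeeping you flag at the end.
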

\begin{proof}
First, let $h_{1}, h_{2} \in \D^{1,p}( \Y )$. We recall from Remark \ref{rem:summation} that $( h_1 - h_2 ) \in \D^{1,p}( \Y )$. Proposition \ref{lemm:energydistortion} (2) yields that 
$h_{1} \circ \varphi, h_{2} \circ \varphi, ( h_{1} - h_{2}) \circ \varphi \in \D^{1,p}( \X )$. Lemma \ref{lem:differential} establishes that $\rho_{ h_{1} - h_{2} } = 0$ $\mm_{\Y}$-almost everywhere in $E$. Then Proposition \ref{lemm:energydistortion} (2) yields that $\rho_{ ( h_{1} - h_{2} ) \circ \varphi } \equiv 0$ $\mm_{\X}$-almost everywhere in $\varphi^{-1}(E)$.
\end{proof}
Using Lemma \ref{cor:Dirichlet}, we observe that for each $f \in \S^{1,p}( \Y )$ and each representative $\bar f \in \D^{1,p}( \Y )$ of $f$, we have $\bar f \circ \varphi \in \D^{1,p}( \X )$. Observe also that if $h \in \D^{1,p}( \Y )$ is another representative of $f$, then the same Lemma establishes the equality $\rho_{ \bar f \circ \varphi - h \circ \varphi } = 0$ $\mm_{\X}$-almost everywhere. Hence, $\d( \pi_{\X}( \bar f \circ \varphi ) ) = \d( \pi_{\X}( h \circ \varphi ) ) \in L^{p}( T^{*}\X )$. We pose the following definition.
\begin{definition}
Fix \(f\in\S^{1,p}(\Y)\) and a representative $\bar f \in \D^{1,p}( \Y )$ of $f$. Then we define the element
\(\d(f\circ\varphi)\in L^p(T^*\X)\) as
\begin{equation}\label{eq:def_d_f_comp_phi}
\d(f\circ\varphi)\coloneqq\d\big(\pi_\X(\bar f\circ\varphi)\big).
\end{equation}
\end{definition}
Before stating the definition \eqref{eq:def_d_f_comp_phi}, we argued that $\d\big(\pi_\X(\bar f\circ\varphi)\big)$ is independent of the chosen representative $\bar f \in \D^{1,p}( \Y )$ of $f$. Hence, the notation $\d( f \circ \varphi )$ is justified.
\begin{lemma}\label{lemm:pullbackgradient}
Let $\varphi \colon \X \rightarrow \Y$ be a mapping with bounded dilatation, $h \in \D^{1,p}( \Y, \rm{Z} )$ and $g \in \UG^{p}( h )$. Then $\widehat{g} = ( g \circ \varphi ) \rho_{\varphi} \in \UG^{p}( h \circ \varphi )$.
\end{lemma}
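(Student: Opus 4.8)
The plan is to verify the defining inequality of a weak upper gradient (Definition \ref{defi:weakupper}) for $h\circ\varphi$ and $\widehat g=(g\circ\varphi)\rho_\varphi$ by working along individual curves, the two key ingredients being that a map of bounded outer dilatation pushes $\Mod_p$-negligible curve families in $\X$ forward to $\Mod_p$-negligible families in $\Y$, and that Proposition \ref{lemm:localityupper} applied to $\varphi$ and $\rho_\varphi$ supplies the pointwise estimate $|(\varphi\circ\gamma)'|(t)\le\rho_\varphi(\gamma(t))|\gamma'|(t)$ along $\Mod_p$-a.e.\ curve. Note that $\rho_\varphi\in\UG^p_\loc(\varphi)$ exists because $\varphi$ satisfies Proposition \ref{lemm:energydistortion}~(3).

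First I would record that $\widehat g$ is an admissible competitor: it is nonnegative and Borel, and since $g\in\UG^p(h)\subset\mathcal L^p(\mm_\Y)$ while $\varphi$ satisfies Proposition \ref{lemm:energydistortion}~(3), Remark \ref{rmk:basic_ineq_phi} (with $g$ in the role of the function there) gives $\widehat g\in\mathcal L^p(\mm_\X)$ with $\int_\X\widehat g^{\,p}\,\d\mm_\X\le K\int_\Y g^p\,\d\mm_\Y$. In particular $\widehat g\in\mathcal L^p_\loc(\mm_\X)$, so $\int_\gamma\widehat g\,\d s<\infty$ for $\Mod_p$-a.e.\ $\gamma\in\C(\X)$ by Remark \ref{rmk:finite_on_curves}.

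Next I would assemble the $\Mod_p$-negligible families that must be discarded. Let $\mathcal M\subset\C(\Y)$ be the family of rectifiable curves in $\Y$ along which \eqref{eq:uppergradient:inequality} fails for $h$ and $g$; this family has vanishing $p$-modulus by the definition of $g\in\UG^p(h)$, and it is invariant under non-decreasing reparametrisations since \eqref{eq:uppergradient:inequality} depends on $\gamma$ only through its endpoints and through $\int_\gamma g\,\d s$. Hence $\mathcal N_1\coloneqq\{\gamma\in\C(\X):\varphi\circ\gamma\in\mathcal M\}$ obeys $\varphi\mathcal N_1\subseteq\mathcal M$, so $\Mod_p(\mathcal N_1)\le K\,\Mod_p(\varphi\mathcal N_1)\le K\,\Mod_p(\mathcal M)=0$ by the bounded outer dilatation hypothesis. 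By Proposition \ref{lemm:localityupper} applied to $\varphi$ there is a $\Mod_p$-negligible $\mathcal N_2\subset\AC(\X)$ outside which $\varphi\circ\gamma$ is absolutely continuous, $\int_\gamma\rho_\varphi\,\d s<\infty$, and $|(\varphi\circ\gamma)'|(t)\le\rho_\varphi(\gamma(t))|\gamma'|(t)$ for a.e.\ $t$. Combining $\mathcal N_1$, the family of $\gamma$ with $\int_\gamma\widehat g\,\d s=\infty$, and the curves whose unit-speed parametrisation lies in $\mathcal N_2$ (the passage between $\C(\X)$ and $\AC(\X)$ via unit-speed parametrisations being handled exactly as in the proof of Proposition \ref{lemm:energydistortion}), I obtain a single $\Mod_p$-negligible family $\mathcal N\subset\C(\X)$ off of which every required good property holds.

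It then remains to verify \eqref{eq:uppergradient:inequality} for $h\circ\varphi$ and $\widehat g$ along an arbitrary rectifiable $\gamma\notin\mathcal N$. By reparametrisation invariance of \eqref{eq:uppergradient:inequality} I may take $\gamma\colon[0,L]\to\X$ with $|\gamma'|\equiv1$. Then $\sigma\coloneqq\varphi\circ\gamma$ is absolutely continuous and of finite length, rectifiable, and not in $\mathcal M$, so $\sfd_{\rm Z}(h(\sigma(L)),h(\sigma(0)))\le\int_\sigma g\,\d s$; and the change-of-variables formula for path integrals along absolutely continuous curves gives
\begin{equation*}
\int_\sigma g\,\d s=\int_0^L g(\varphi(\gamma(t)))\,|\sigma'|(t)\,\d t\le\int_0^L g(\varphi(\gamma(t)))\,\rho_\varphi(\gamma(t))\,\d t=\int_\gamma\widehat g\,\d s,
\end{equation*}
the last equality using $|\gamma'|\equiv1$. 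Chaining the two inequalities yields \eqref{eq:uppergradient:inequality} for $h\circ\varphi$ and $\widehat g$ along $\gamma$, hence $\widehat g\in\UG^p(h\circ\varphi)$. The main obstacle is precisely the estimate $\Mod_p(\mathcal N_1)=0$: this is where bounded outer dilatation genuinely enters, and the conclusion fails without some such hypothesis. A routine but slightly delicate point is the bookkeeping between curve families in $\C(\X)$ and in $\AC(\X)$; alternatively, one can sidestep it by applying Lemma \ref{lemma:strongupper} to replace $g$ by $g+\varepsilon G_0$ — which satisfies \eqref{eq:uppergradient:inequality} along \emph{every} rectifiable curve in $\Y$ — running the same chain-rule estimate, and letting $\varepsilon\to0$, using that $(G_0\circ\varphi)\rho_\varphi\in\mathcal L^p(\mm_\X)$ (again by Remark \ref{rmk:basic_ineq_phi}) has finite path integral along $\Mod_p$-a.e.\ curve.
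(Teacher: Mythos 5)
Your proof is correct and follows essentially the same route as the paper: pull back the exceptional curve family in \(\Y\) using the modulus inequality \(K_O(\varphi)\leq K\), apply Proposition \ref{lemm:localityupper} to \(\varphi\) and \(\rho_\varphi\) along \(\Mod_p\)-a.e.\ curve, chain the two estimates along unit-speed parametrizations, and get \(\widehat g\in\mathcal L^p(\mm_\X)\) from Remark \ref{rmk:basic_ineq_phi}. The only (cosmetic) difference is that you use the endpoint form \eqref{eq:uppergradient:inequality} for \((h,g)\) together with the change-of-variables formula for path integrals, whereas the paper invokes the curvewise derivative characterization of Proposition \ref{lemm:localityupper} for \((h,g)\) as well.
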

\begin{proof}
For $\Mod_{p}$-almost every $\theta \in \AC( \Y )$, $h \circ \theta$ is absolutely continuous, $\int_{ \theta } g \,\d s = \infty$, and $| ( h \circ \theta )' |(t) \leq ( g(\theta) )(s) |\theta'|(s)$ almost everywhere in $I_{\theta}$. Let $\mathcal{N}_{h,g} \subset \AC( \Y )$ denote the collection of paths for which some of these properties fail. We recall from Proposition \ref{lemm:localityupper} that $\Mod_{p}( \mathcal{N}_{h,g} ) = 0$.

Proposition \ref{lemm:energydistortion} (3) states that $\varphi \in \D^{1,p}_{\loc}( \X, \Y )$, so we may fix a representative $\rho_{\varphi}$ for $|D \varphi|$. Let $\mathcal{N}_{ \varphi, \rho_{\varphi} } \subset \AC( \X )$ be the path family defined analogously to $\mathcal{N}_{h,g}$. Reapplying Proposition \ref{lemm:localityupper} shows $\Mod_{p}( \mathcal{N}_{\varphi, \rho_{\varphi}} ) = 0$.

Let $\mathcal{N}_{0} \subset \C( \X )$ denote the collection of all rectifiable paths whose unit speed parametrizations $\widetilde{\gamma}$ are contained in $\mathcal{N}_{ \varphi, \rho_{\varphi} }$ or that satisfy $\varphi \circ \widetilde{\gamma} \in \mathcal{N}_{ h, g }$. It follows from Proposition \ref{lemm:energydistortion} (1) that $\Mod_{p}( \mathcal{N}_{0} ) = 0$. For every $\gamma \in \AC( \X ) \setminus \mathcal{N}_{0}$, we have $\gamma \in \AC( \X ) \setminus \mathcal{N}_{ \varphi, \rho_{\varphi} }$ and $\varphi \circ \gamma \in \AC( \Y ) \setminus \mathcal{N}_{h,g}$, hence $( h \circ \varphi ) \circ \gamma$ is absolutely continuous, $\widehat{g}$ is integrable over $\gamma$, and
\begin{equation*}
	| ( h \circ \varphi \circ \gamma )' |(t) \leq \widehat{g}( \gamma(t) ) | \gamma' |(t) \quad\text{almost everywhere in $I_{\gamma}$.}
\end{equation*}
It follows from \eqref{eq:basic_ineq_phi} that $\widehat{g} \in \mathcal{L}^{p}( \mm_{\X} )$, so $\widehat{g} \in \UG^{p}( h \circ \varphi )$.
\end{proof}
\begin{theorem}[Pullback]\label{thm:pullback_map}
Let \((\X,\sfd_\X,\mm_\X)\), \((\Y,\sfd_\Y,\mm_\Y)\)
be metric measure spaces, $p\in (1,\infty)$, and $K > 0$. Let \(\varphi\colon\X\to\Y\) be mapping with bounded outer dilatation, with $K_{O}( \varphi ) \leq K$.
Then there exists a unique linear, continuous operator
\(\varphi^*\colon L^p(T^*\Y)\to L^p(T^*\X)\),
called the \emph{pullback map} of \(\varphi\), such that
\begin{equation}\label{eq:def_pullback}
\varphi^*(h\,\d f)=\pi_\X(\bar h\circ\varphi)\,\d(f\circ\varphi),
\quad\text{ for every }f\in\S^{1,p}(\Y)\text{ and }h\in L^\infty(\mm_\Y),
\end{equation}
where \(\bar h\colon\Y\to\R\) is any bounded Borel function such that
\(h=\pi_\Y(\bar h)\). Moreover, it holds that
\begin{equation}\label{eq:ineq_pullback}
|\varphi^*\omega|\leq|D\varphi|\,\pi_\X\big(\overline{|\omega|}\circ\varphi\big)
\;\;\;\mm_\X\text{-a.e.\ in }\X,\quad\begin{array}{ll}
\text{ for every }\omega\in L^p(T^*\Y)\text{ and every}\\
\text{ representative }\overline{|\omega|}\in\mathcal L^p(\mm_\Y)\text{ of }|\omega|.
\end{array}
\end{equation}
In particular,
\begin{equation}\label{eq:int_ineq_pullback}
{\|\varphi^*\omega\|}_{L^p(T^*\X)}\leq K^{1/p}{\|\omega\|}_{L^p(T^*\Y)},
\quad\text{ for every }\omega\in L^p(T^*\Y).
\end{equation}
\end{theorem}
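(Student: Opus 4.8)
The plan is to construct $\varphi^*$ first on a dense subspace of $L^p(T^*\Y)$ directly by the formula \eqref{eq:def_pullback}, to prove the pointwise bound \eqref{eq:ineq_pullback} there, and then to extend by continuity; the main inputs are Lemma \ref{cor:Dirichlet}, Lemma \ref{lemm:pullbackgradient} and the energy estimate \eqref{eq:basic_ineq_phi}. Call an element of $L^p(T^*\Y)$ \emph{elementary} if it can be written as $\omega=\sum_{i=1}^n\1_{E_i}^{\mm_\Y}\,\d f_i$ for some Borel partition $(E_i)_{i=1}^n$ of $\Y$ and $f_i\in\S^{1,p}(\Y)$. By the density of $\{\sum_i h_i\,\d f_i\}$ in $L^p(T^*\Y)$, approximating the $L^\infty(\mm_\Y)$-coefficients $h_i$ by simple functions and passing to a common refining partition, the elementary elements are dense in $L^p(T^*\Y)$. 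For such $\omega$ I fix representatives $\bar f_i\in\D^{1,p}(\Y)$; Lemma \ref{cor:Dirichlet} gives $\bar f_i\circ\varphi\in\D^{1,p}(\X)$ and makes $\d(f_i\circ\varphi)\in L^p(T^*\X)$ well defined, and I set $\varphi^*\omega:=\sum_{i=1}^n\1_{\varphi^{-1}(E_i)}^{\mm_\X}\,\d(f_i\circ\varphi)$.

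The core step is the pointwise estimate on elementary elements. Writing $A_i:=\varphi^{-1}(E_i)$, the sets $A_i$ form a Borel partition of $\X$, hence $\1_{A_i}^{\mm_\X}\varphi^*\omega=\1_{A_i}^{\mm_\X}\,\d(f_i\circ\varphi)$ and therefore $|\varphi^*\omega|=\sum_i\1_{A_i}^{\mm_\X}\,|\d(f_i\circ\varphi)|=\sum_i\1_{A_i}^{\mm_\X}\,\pi_\X(\rho_{\bar f_i\circ\varphi})$. Applying Lemma \ref{lemm:pullbackgradient} to $\bar f_i$ with its weak upper gradient $\rho_{\bar f_i}$ shows $(\rho_{\bar f_i}\circ\varphi)\rho_\varphi\in\UG^p(\bar f_i\circ\varphi)$, so $\rho_{\bar f_i\circ\varphi}\le(\rho_{\bar f_i}\circ\varphi)\rho_\varphi$ $\mm_\X$-a.e.\ by minimality. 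Summing over $i$ and using that $(E_i)$ partitions $\Y$, so that $\1_{A_i}=\1_{E_i}\circ\varphi$, I obtain
\[
|\varphi^*\omega|\le\rho_\varphi\cdot\Big(\textstyle\sum_{i=1}^n\1_{E_i}\rho_{\bar f_i}\Big)\circ\varphi\qquad\mm_\X\text{-a.e.},
\]
where $\sum_i\1_{E_i}\rho_{\bar f_i}\in\mathcal L^p(\mm_\Y)$ is a representative of $|\omega|=\sum_i\1_{E_i}^{\mm_\Y}|Df_i|$. Since any two representatives of $|\omega|$ differ only on an $\mm_\Y$-null set, on which $\rho_\varphi=0$ $\mm_\X$-a.e.\ by \eqref{eq:inequalitygood:measure}, the displayed inequality is exactly \eqref{eq:ineq_pullback}, valid for every representative $\overline{|\omega|}$. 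In particular, taking $\omega=0$ forces $\varphi^*\omega=0$, so $\varphi^*$ is a well-posed linear map on elementary elements, independent of the representatives $\bar f_i$; and combining the bound with \eqref{eq:basic_ineq_phi} (applied to $\overline{|\omega|}\in\mathcal L^p(\mm_\Y)$) yields $\|\varphi^*\omega\|_{L^p(T^*\X)}^p\le\int_\X|D\varphi|^p(\overline{|\omega|}\circ\varphi)^p\,\d\mm_\X\le K\int_\Y|\omega|^p\,\d\mm_\Y$.

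Being linear and $K^{1/p}$-bounded on a dense subspace, $\varphi^*$ extends uniquely to a linear continuous operator $L^p(T^*\Y)\to L^p(T^*\X)$ satisfying \eqref{eq:int_ineq_pullback}; uniqueness of any operator obeying \eqref{eq:def_pullback} is immediate from the density of $\{\sum_i h_i\,\d f_i\}$. To check \eqref{eq:def_pullback} for arbitrary $h\in L^\infty(\mm_\Y)$, I would approximate $h$ by simple functions $h_m$: then $h_m\,\d f\to h\,\d f$ in $L^p(T^*\Y)$, while for any bounded Borel representative $\bar h$ of $h$,
\[
\big\|\pi_\X(\bar h_m\circ\varphi)\,\d(f\circ\varphi)-\pi_\X(\bar h\circ\varphi)\,\d(f\circ\varphi)\big\|^p=\int_\X\big|(\bar h_m-\bar h)\circ\varphi\big|^p\rho_{\bar f\circ\varphi}^p\,\d\mm_\X\le K\|h_m-h\|_{L^\infty(\mm_\Y)}^p\|\rho_{\bar f}\|_{L^p(\mm_\Y)}^p\longrightarrow 0
\]
by $\rho_{\bar f\circ\varphi}\le(\rho_{\bar f}\circ\varphi)\rho_\varphi$ and \eqref{eq:basic_ineq_phi}; since $\varphi^*(h_m\,\d f)=\pi_\X(\bar h_m\circ\varphi)\,\d(f\circ\varphi)$ for the elementary element $h_m\,\d f$, passing to the limit gives \eqref{eq:def_pullback}. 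Likewise, \eqref{eq:ineq_pullback} and \eqref{eq:int_ineq_pullback} pass to a general $\omega$ by taking elementary $\omega_m\to\omega$, observing $\varphi^*\omega_m\to\varphi^*\omega$ and $(\overline{|\omega_m|}\circ\varphi)\rho_\varphi\to(\overline{|\omega|}\circ\varphi)\rho_\varphi$ in $L^p(\mm_\X)$ (again by \eqref{eq:basic_ineq_phi}), extracting an a.e.-convergent subsequence, and passing to the limit.

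The hard part is the pointwise estimate on elementary elements: it rests on the locality of the cotangent differential, which lets one isolate a single $\d(f_i\circ\varphi)$ on each piece $\varphi^{-1}(E_i)$ of the partition, together with the chain-rule inequality $\rho_{\bar f_i\circ\varphi}\le(\rho_{\bar f_i}\circ\varphi)\rho_\varphi$ furnished by Lemma \ref{lemm:pullbackgradient}. A secondary subtlety is that the right-hand side of \eqref{eq:ineq_pullback} is a priori sensitive to the choice of representative $\overline{|\omega|}$, but it is insensitive on $\{\rho_\varphi>0\}$, which is exactly where $\varphi^*\omega$ is supported — and this is precisely what the measure-distortion inequality \eqref{eq:inequalitygood:measure} provides.
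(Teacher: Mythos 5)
Your proposal is correct and follows essentially the same route as the paper: define $\varphi^*$ on the dense space of elements $\sum_i\1_{E_i}^{\mm_\Y}\d f_i$ by the same formula, obtain the pointwise bound from Lemma \ref{lemm:pullbackgradient} (which gives well-posedness and the norm bound via Remark \ref{rmk:basic_ineq_phi}), extend by continuity, and recover \eqref{eq:def_pullback} and \eqref{eq:ineq_pullback} for general $h$ and $\omega$ by approximation. The only differences are cosmetic: you handle the representative-insensitivity and the limit passage in \eqref{eq:ineq_pullback} through the $L^p(\mm_\X)$-convergence of $(\overline{|\omega_m|}\circ\varphi)\rho_\varphi$ and the measure-distortion inequality \eqref{eq:inequalitygood:measure}, where the paper instead picks everywhere-pointwise convergent representatives and uses uniform convergence of the coefficients for \eqref{eq:def_pullback}; both devices are equally valid.
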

\begin{proof}
First of all, we observe that the right hand side of \eqref{eq:def_pullback} is
well-defined: given two representatives \(\bar h,\bar h' \in \D^{1,p}( \Y ) \) of \(h\),
we have that \(N\coloneqq\{\bar h\neq\bar h'\}\) is \(\mm_\Y\)-negligible, 
thus \(\1_{\varphi^{-1}(N)}^{\mm_\X}\d(f\circ\varphi)=0\) by Lemma \ref{cor:Dirichlet}; this implies that
\(\pi_\X(\bar h\circ\varphi)\,\d(f\circ\varphi)=\pi_\X(\bar h'\circ\varphi)\,\d(f\circ\varphi)\). Similarly, the right hand side of \eqref{eq:ineq_pullback} does
not depend on the chosen representative \(\overline{|\omega|}\) of \(|\omega|\).

Let us now define the linear space \(\mathcal V\subset L^p(T^*\Y)\) as
\[
\mathcal V\coloneqq\bigg\{\sum_{i=1}^n\1_{E_i}^{\mm_\Y}\d f_i\;\bigg|\;
n\in\N,\;(E_i)_{i=1}^n\text{ Borel partition of }\Y,
\;(f_i)_{i=1}^n\subset\S^{1,p}(\Y)\bigg\}.
\]
Observe that the space \(\mathcal V\) is dense in \(L^p(T^*\X)\).
By linearity and \eqref{eq:def_pullback}, we define the
pullback map \(\varphi^*\colon\mathcal V\to L^p(T^*\X)\) as
\begin{equation}\label{eq:def_pullback_on_simple_forms}
\varphi^*\omega\coloneqq\sum_{i=1}^n\1_{\varphi^{-1}(E_i)}^{\mm_\X}\d(f_i\circ\varphi),
\quad\text{ for every }\omega=\sum_{i=1}^n\1_{E_i}^{\mm_\Y}\d f_i\in\mathcal V.
\end{equation}
To prove that the above definition is well-posed (namely, that \(\varphi^*\omega\)
is independent of the particular way of representing \(\omega\)), let us fix a
representative \(\overline{|Df_i|}\in\mathcal L^p(\mm_\Y)\) of \(|Df_i|\)
for any \(i=1,\ldots,n\). Then
\[\begin{split}
\bigg|\sum_{i=1}^n\1_{\varphi^{-1}(E_i)}^{\mm_\X}\d(f_i\circ\varphi)\bigg|&=\sum_{i=1}^n\1_{\varphi^{-1}(E_i)}^{\mm_\X}\big|\d(f_i\circ\varphi)\big|
\leq|D\varphi|\sum_{i=1}^n\1_{\varphi^{-1}(E_i)}^{\mm_\X}\,
\pi_\X\big(\overline{|Df_i|}\circ\varphi\big)\\
&=|D\varphi|\,\pi_\X\bigg(\bigg(\sum_{i=1}^n
\1_{E_i}\,\overline{|Df_i|}\bigg)\circ\varphi\bigg)
\end{split}\]
is satisfied \(\mm_\X\)-a.e.\ on \(\X\) by Lemma \ref{lemm:pullbackgradient}.
Since the function \(\sum_{i=1}^n\1_{E_i}\,\overline{|Df_i|}\in\mathcal L^p(\mm_\Y)\)
is a representative of \(|\omega|\), we deduce that the definition in 
\eqref{eq:def_pullback_on_simple_forms} is well-posed and that all the elements
of \(\mathcal V\) satisfy \eqref{eq:ineq_pullback}. Proposition \ref{lemm:energydistortion} (2) implies that all the elements of \(\mathcal V\)
satisfy the inequality in \eqref{eq:int_ineq_pullback}. This grants
that the map \(\varphi^*\) -- which is linear by construction -- is continuous.
Therefore, it can be uniquely extended to a linear and continuous map
\(\varphi^*\colon L^p(T^*\Y)\to L^p(T^*\X)\).

To prove that \(\varphi^*\)
satisfies \eqref{eq:def_pullback}, let us fix any \(h\in L^\infty(\mm_\Y)\)
and \(f\in\S^{1,p}(\Y)\). Choose any sequence \((h_n)_n\subset{\sf Sf}(\Y)\)
that converges to \(h\) in \(L^\infty(\mm_\Y)\). Then we can select some
bounded Borel functions \(\bar h_n,\bar h\colon\Y\to\R\) such that
\(\pi_\Y(\bar h_n)=h_n\) for all \(n\in\N\), \(\pi_\Y(\bar h)=h\)
and \(\bar h_n\to\bar h\) uniformly on \(\Y\). Thanks to
\eqref{eq:def_pullback_on_simple_forms} and the linearity of \(\varphi^*\), we know that
\begin{equation}\label{eq:def_pullback_aux}
\varphi^*(h_n\d f)=\pi_\X(\bar h_n\circ\varphi)\,\d(f\circ\varphi),
\quad\text{ for every }n\in\N.
\end{equation}
Notice that \(\bar h_n\circ\varphi\to\bar h\circ\varphi\) uniformly on \(\X\),
thus \(\pi_\X(\bar h_n\circ\varphi)\to\pi_\X(\bar h\circ\varphi)\) in
\(L^\infty(\mm_\X)\). Hence, by letting \(n\to\infty\) in
\eqref{eq:def_pullback_aux} we conclude that
\(\varphi^*(h\,\d f)=\pi_\X(\bar h\circ\varphi)\,\d(f\circ\varphi)\).
This proves \eqref{eq:def_pullback}.

Let us now prove that \eqref{eq:ineq_pullback} is verified for any
\(\omega\in L^p(T^*\Y)\). Choose a sequence \((\omega_n)_n\subset\mathcal V\)
such that \(\omega_n\to\omega\) in \(L^p(T^*\Y)\). Since \(\varphi^*\) is continuous,
it also holds that \(\varphi^*\omega_n\to\varphi^*\omega\) in \(L^p(T^*\X)\).
Up to passing to a subsequence and relabeling, we have that \(|\omega_n|\to|\omega|\)
\(\mm_\Y\)-a.e.\ and \(|\varphi^*\omega_n|\to|\varphi^*\omega|\) \(\mm_\X\)-a.e.
Then there exist representatives
\(\overline{|\omega_n|},\overline{|\omega|}\in\mathcal L^p(\mm_\Y)\)
of \(|\omega_n|,|\omega|\) such that
\(\overline{|\omega_n|}(y)\to\overline{|\omega|}(y)\) for every \(y\in\Y\).
We know that all the elements of \(\mathcal V\) satisfy \eqref{eq:ineq_pullback},
thus in particular
\begin{equation}\label{eq:ineq_pullback_aux}
|\varphi^*\omega_n|\leq|D\varphi|\,\pi_\X\big(\overline{|\omega_n|}\circ\varphi\big)
\;\;\;\mm_\X\text{-a.e.\ in }\X,\quad\text{ for every }n\in\N.
\end{equation}
Notice that \(\big(\overline{|\omega_n|}\circ\varphi\big)(x)\to
\big(\overline{|\omega|}\circ\varphi\big)(x)\) for every \(x\in\X\),
so that by letting \(n\to\infty\) in \eqref{eq:ineq_pullback_aux} we conclude that
\(|\varphi^*\omega|\leq|D\varphi|\,\pi_\X\big(\overline{|\omega|}\circ\varphi\big)\)
holds \(\mm_\X\)-a.e.\ in \(\X\). This proves \eqref{eq:ineq_pullback}.
Finally, the inequality in \eqref{eq:int_ineq_pullback} follows
from \eqref{eq:ineq_pullback} by Remark \ref{rmk:basic_ineq_phi}.
Therefore, the statement is achieved.
\end{proof}
\subsection{Consistency of the pullback}\label{sec:consistency}
Under the assumptions of Theorem \ref{thm:pullback_map}, we have the pullback operator $\varphi^{*} \colon L^{p}( T^{*}\Y ) \to L^{p}( T^{*}\X )$ provided by Theorem \ref{thm:pullback_map}. The \emph{adjoint} of the pullback operator is the unique $\mathbb{R}$-linear map $\varphi_{*} \colon L^{q}( T\X ) \to L^{q}( T\Y )$ determined by the following action:
\begin{equation*}
	L_{ \varphi_{*}v }( \omega )
	\coloneqq
	\int_{\X}
		\langle \varphi^{*}(\omega), v \rangle
	\,\d\mm_{\X}
	\quad\text{for all $\omega \in L^{p}( T^{*}\Y )$}.
\end{equation*}
To see that $\varphi_{*}v$ is well-defined, Theorem \ref{thm:pullback_map} implies the $\mathbb{R}$-linearity and that $$| L_{ \varphi_{*}v }( \omega ) | \leq K^{1/p}\| v \|_{ L^{q}( T\X ) } \| \omega \|_{ L^{p}( T^{*}\Y ) } \quad\text{for every $\omega \in L^{p}( T^{*}\Y )$.}$$ Hence there exists a unique $\varphi_{*}v \in L^{q}( T\Y )$ satisfying
\begin{equation*}
	L_{ \varphi_{*}v }( \omega )
	=
	\int_{\Y}
		\langle \omega, \varphi_{*}v \rangle
	\,\d\mm_{\Y}
	\quad\text{for all $\omega \in L^{p}( T^{*}\Y )$},
\end{equation*}
with $\| \varphi_{*}v \|_{ L^{q}( T^{*}\Y ) } \leq K^{1/p} \| v \|_{ L^{q}( T\X ) }$ \cite[Proposition 1.2.13]{Gig:18}.

A second way to define the pullback (and thus its adjoint) is using the mapping \[[\varphi^*\#]\colon L^0(T^*\Y)\to\varphi^*L^0(T^*\Y)\] from Theorem
\ref{thm:pullback_NMod}, the target \(\varphi^*L^0(T^*\Y)\) being intended as an
\(L^0(\tilde\mm_\X)\)-normed \(L^0(\tilde\mm_\X)\)-module, where we set
\(\tilde\mm_\X\coloneqq\mm_\X|_{\{\rho_\varphi>0\}}\).
Indeed, the measures \(\varphi_*\tilde\mm_\X\) and \(\varphi_*(\rho_\varphi^p\mm_\X)\)
have the same negligible sets and \(\varphi_*(\rho_\varphi^p\mm_\X)\leq K\mm_\Y\),
thus in particular \(\varphi_*\tilde\mm_\X\ll\mm_\Y\) and accordingly Theorem
\ref{thm:pullback_NMod} can be applied.

By mimicking the construction of differential in
\cite[Section 3]{Gig:Pas:Sou:20}, which was in turn inspired by the notion of differential
of a map of bounded deformation introduced in \cite[Proposition 2.4.6]{Gig:18}, we obtain
the following:
\begin{theorem}[Differential]\label{thm:def_diff}
Let \((\X,\sfd_\X,\mm_\X)\), \((\Y,\sfd_\Y,\mm_\Y)\), and \(\varphi\) be as in
Theorem \ref{thm:pullback_map}. Then there exists a unique \(L^0(\mm_\X)\)-linear continuous
map \(\d\varphi\colon L^0(T\X)\to{\rm Ext}\big(\varphi^*L^0(T^*\Y)\big)^*\) such that
\[
\big\langle{\rm ext}([\varphi^*\d f]),\d\varphi(v)\big\rangle=
\big\langle\d(f\circ\varphi),v\big\rangle,\quad\text{ for every }
v\in L^0(T\X)\text{ and }f\in\S^{1,p}(\Y).
\]
Moreover, for every \(v\in L^0(T\X)\) it holds that
\begin{equation}\label{eq:ineq_diff}
\big|\d\varphi(v)\big|\leq|D\varphi||v|,\quad\mm_\X\text{-a.e.\ on }\X.
\end{equation}
\end{theorem}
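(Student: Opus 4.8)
The plan is to define $\d\varphi(v)$ by prescribing its action on the elements ${\rm ext}([\varphi^*\d f])$, $f\in\S^{1,p}(\Y)$, then extending by $L^0(\mm_\X)$-linearity and continuity. First note that these elements generate a dense $L^0(\mm_\X)$-submodule of ${\rm Ext}(\varphi^*L^0(T^*\Y))$: the $L^\infty(\mm_\Y)$-linear combinations of $\{\d f:f\in\S^{1,p}(\Y)\}$ are dense in $L^p(T^*\Y)$, hence their $L^0(\mm_\Y)$-linear combinations are dense in $L^0(T^*\Y)$, and since $w\mapsto[\varphi^*w]$ is continuous (by item~i) of Theorem \ref{thm:pullback_NMod} together with $\varphi_*\tilde\mm_\X\ll\mm_\Y$), item~ii) of Theorem \ref{thm:pullback_NMod} and the operator ${\rm ext}$ give the claim. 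Consequently a continuous $L^0(\mm_\X)$-linear functional on ${\rm Ext}(\varphi^*L^0(T^*\Y))$ is determined by its values on this family, so any two maps satisfying the stated identity coincide at each $v$ and, being $L^0(\mm_\X)$-linear in $v$, coincide; this is uniqueness.

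The analytic core is the pointwise bound
\begin{equation*}
\Big|\sum_{i=1}^n g_i\,\d(f_i\circ\varphi)\Big|\le|D\varphi|\,\Big|\sum_{i=1}^n g_i\,{\rm ext}\big([\varphi^*\d f_i]\big)\Big|\qquad\mm_\X\text{-a.e.},
\end{equation*}
for all $f_i\in\S^{1,p}(\Y)$ and $g_i\in L^0(\mm_\X)$. Since both sides are continuous in $(g_i)\in L^0(\mm_\X)^n$ and $f\mapsto\d(f\circ\varphi)$ is linear, it suffices, after passing to a common refinement, to treat $g_i=\1_{A_i}^{\mm_\X}$ for a Borel partition $(A_i)$ of $\X$ (arbitrary $g_i$ then follow by approximation). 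In that case the left-hand side is $\sum_i\1_{A_i}^{\mm_\X}|\d(f_i\circ\varphi)|=\sum_i\1_{A_i}^{\mm_\X}\pi_\X(\rho_{\bar f_i\circ\varphi})$ for representatives $\bar f_i\in\D^{1,p}(\Y)$ of $f_i$, and Lemma \ref{lemm:pullbackgradient} (with target $\R$) gives $\rho_{\bar f_i\circ\varphi}\le(\rho_{\bar f_i}\circ\varphi)\,\rho_\varphi$ $\mm_\X$-a.e.; on the other hand, item~i) of Theorem \ref{thm:pullback_NMod} and the definition of ${\rm ext}$ give $|{\rm ext}([\varphi^*\d f_i])|=\1_{\{\rho_\varphi>0\}}^{\mm_\X}\pi_\X(\rho_{\bar f_i}\circ\varphi)$, so the right-hand side equals $\sum_i\1_{A_i}^{\mm_\X}|D\varphi|\,\pi_\X(\rho_{\bar f_i}\circ\varphi)=\sum_i\1_{A_i}^{\mm_\X}\pi_\X\big((\rho_{\bar f_i}\circ\varphi)\,\rho_\varphi\big)$, the indicator of $\{\rho_\varphi>0\}$ being absorbed since it multiplies $|D\varphi|=\pi_\X(\rho_\varphi)$. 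Comparing the two expressions yields the inequality.

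Granted this, fix $v\in L^0(T\X)$ and set, on the dense submodule $\mathcal G$ spanned over $L^0(\mm_\X)$ by $\{{\rm ext}([\varphi^*\d f])\}$,
\begin{equation*}
\mathcal T_v\Big(\sum_{i=1}^n g_i\,{\rm ext}\big([\varphi^*\d f_i]\big)\Big)\coloneqq\sum_{i=1}^n g_i\,\big\langle\d(f_i\circ\varphi),v\big\rangle.
\end{equation*}
Applying the inequality of the previous paragraph to the difference of two representations of a given element of $\mathcal G$ shows $\mathcal T_v$ is well-posed; it is $L^0(\mm_\X)$-linear by construction, and $|\mathcal T_v(\xi)|\le|v|\,\big|\sum_i g_i\,\d(f_i\circ\varphi)\big|\le|D\varphi|\,|v|\,|\xi|$ $\mm_\X$-a.e. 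Hence $\mathcal T_v$ extends uniquely to an $L^0(\mm_\X)$-linear continuous map ${\rm Ext}(\varphi^*L^0(T^*\Y))\to L^0(\mm_\X)$, i.e.\ to an element $\d\varphi(v)\in{\rm Ext}(\varphi^*L^0(T^*\Y))^*$ with $|\d\varphi(v)|\le|D\varphi|\,|v|$, which is \eqref{eq:ineq_diff}; and $\langle{\rm ext}([\varphi^*\d f]),\d\varphi(v)\rangle=\langle\d(f\circ\varphi),v\rangle$ holds by construction. Finally $v\mapsto\d\varphi(v)$ is $L^0(\mm_\X)$-linear and continuous because $|\d\varphi(v)-\d\varphi(v')|=|\d\varphi(v-v')|\le|D\varphi|\,|v-v'|$.

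The main obstacle is the pointwise inequality of the second paragraph: one must combine the analytic estimate $\rho_{\bar f\circ\varphi}\le(\rho_{\bar f}\circ\varphi)\,\rho_\varphi$ furnished by Lemma \ref{lemm:pullbackgradient} with the identification of the pointwise norm $|{\rm ext}([\varphi^*\d f])|$ coming from Theorem \ref{thm:pullback_NMod}, keeping careful track of the passage between the measures $\mm_\X$ and $\tilde\mm_\X=\mm_\X|_{\{\rho_\varphi>0\}}$ and of the extension-by-zero functor ${\rm Ext}$; one must also take some care with the reduction to partitions via common refinements. Everything else — well-posedness, continuity, and the extension to the whole module — is then a routine density argument.
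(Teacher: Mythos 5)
Your proof is correct and follows the same overall strategy as the paper: fix \(v\), define a functional on a dense part of \({\rm Ext}\big(\varphi^*L^0(T^*\Y)\big)\) by pairing \(v\) with the \(\X\)-side differential, bound it pointwise by \(|D\varphi|\,|v|\) times the pointwise norm, and extend by continuity (your extension step is exactly the argument of \cite[Proposition 2.16]{Gig:17} that the paper invokes). The difference lies in where the pointwise estimate comes from. The paper defines \(T_v([\varphi^*\omega])\coloneqq\langle\varphi^*\omega,v\rangle\) for \emph{arbitrary} \(\omega\in L^p(T^*\Y)\) and obtains well-posedness and the bound in one line from the already-proved inequality \eqref{eq:ineq_pullback} of Theorem \ref{thm:pullback_map}, via \(|\varphi^*\omega|\leq|D\varphi|\,\pi_\X\big(\overline{|\omega|}\circ\varphi\big)=|D\varphi|\,{\rm ext}\big(|[\varphi^*\omega]|\big)\); no reduction to partition coefficients is needed. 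You instead restrict to the generators \(\d f\) with \(L^0(\mm_\X)\) coefficients and re-derive the estimate by hand from Lemma \ref{lemm:pullbackgradient} together with item i) of Theorem \ref{thm:pullback_NMod}, which essentially duplicates the computation already carried out in the proof of Theorem \ref{thm:pullback_map}; your reduction to \(g_i=\1_{A_i}^{\mm_\X}\) does go through, because \(f\mapsto\d(f\circ\varphi)\) and \(f\mapsto{\rm ext}([\varphi^*\d f])\) are linear, so constant-coefficient combinations on each piece of a common refinement collapse to a single Sobolev function, and both sides are stable under \(L^0\)-limits of the coefficients. One point you (and, to be fair, the paper) leave implicit: the density of the \(L^0(\mm_\X)\)-span of \(\big\{{\rm ext}([\varphi^*\d f])\,:\,f\in\S^{1,p}(\Y)\big\}\), which you also use for uniqueness, needs the compatibility \([\varphi^*(h\,\omega)]=(h\circ\varphi)\,[\varphi^*\omega]\) in order to convert the dense class of \(L^\infty\)-combinations \(\sum_i h_i\,\d f_i\) into \(L^0(\tilde\mm_\X)\)-combinations of the \([\varphi^*\d f_i]\); this is a standard property of Gigli's pullback module, but it is not among the two properties listed in Theorem \ref{thm:pullback_NMod} as stated, so it deserves an explicit mention.
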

\begin{proof}
Given any \(v\in L^0(T\X)\) and \(\omega\in L^p(T^*\Y)\), we define
\(T_v([\varphi^*\omega])\coloneqq\langle\varphi^*\omega,v\rangle\in L^0(\mm_\X)\). Since
\[
\big|T_v([\varphi^*\omega])\big|=\big|\langle\varphi^*\omega,v\rangle\big|
\leq|\varphi^*\omega||v|\overset{\eqref{eq:ineq_pullback}}\leq
|D\varphi|\,\pi_\X\big(\overline{|\omega|}\circ\varphi\big)|v|
=|D\varphi|\,{\rm ext}\big(\big|[\varphi^*\omega]\big|\big)|v|,\quad\mm_\X\text{-a.e.},
\]
we deduce that the map \(T_v\) is well-defined and can be uniquely extended
(by arguing as in \cite[Proposition 2.16]{Gig:17}) to an element
\(\d\varphi(v)\in{\rm Ext}\big(\varphi^*L^0(T^*\Y)\big)^*\) satisfying \eqref{eq:ineq_diff}.
Given that \(\d\varphi\) is \(L^0(\mm_\X)\)-linear operator by construction, we infer from
\eqref{eq:ineq_diff} that \(\d\varphi\) is also continuous. Finally, its uniqueness is
granted by the fact that \(L^p(T^*\Y)\) is generated by
\(\big\{\d f\,:\,f\in\S^{1,p}(\Y)\big\}\).
\end{proof}
We see from Theorem \ref{thm:def_diff} that the differential $\d \varphi$ and $\varphi_{*}$ are related in the following way:
\begin{equation*}
	L_{ \varphi_{*}v }( \omega )
	=
	\int_{\X}
		\langle{\rm ext}([\varphi^*\omega]),\d\varphi(v)\big\rangle
	\,\d\mm_{\X}
	\quad\text{for every $v \in L^{q}(T\X)$ and every $\omega \in L^{p}( T^{*}\Y )$.}
\end{equation*}
We consider the consistency between Theorem \ref{thm:def_diff} and
\cite[Definition 3.4]{Gig:Pas:Sou:20}. In \cite{Gig:Pas:Sou:20} the differential
\(\hat\d\varphi\) is defined (under more restrictive assumptions on \(\varphi\) and
for \(p=2\)) as an operator of the form
\[
\hat\d\varphi\colon L^0(T\X)\to{\rm Ext}\big(\varphi^*L^0_\mu(T^*\Y)\big)^*,
\]
where we set \(\mu\coloneqq\varphi_*(\rho_\varphi^p\mm_\X)\) and \(L^0_\mu(T^*\Y)\)
stands for the cotangent module of \((\Y,\sfd_\Y,\mu)\).

More specifically, the differential \(\hat\d\varphi\) is characterised
by the identity
\begin{equation}\label{eq:def_hat_d_varphi}
\big\langle{\rm ext}([\varphi^*\d_\mu f]),\hat\d\varphi(v)\big\rangle
=\langle\d g,v\rangle
\end{equation}
for every \(v\in L^0(T\X)\), \(f\in\S^{1,p}(\Y,\mu)\), and
\(g\in\S^{1,p}(\X)\) such that \(g=f\circ\varphi\)
\(\mm_\X\)-a.e.\ on \(\{\rho_\varphi>0\}\); the existence of
such \(g\) and the fact that \(\1_{\{\rho_\varphi>0\}}\d g\) is
independent of the specific \(g\) were proved in
\cite[Proposition 3.3]{Gig:Pas:Sou:20}. Recall that
\(\mu=\varphi_*(\rho_\varphi^p\mm_\X)\leq K\mm_\Y\), as pointed out
in Proposition \ref{lemm:energydistortion}. This ensures that
\(\S^{1,p}(\Y)=\S^{1,p}(\Y,\mm_\Y)\subset\S^{1,p}(\Y,\mu)\)
and \(|Df|_\mu\leq|Df|_{\mm_\Y}=|Df|\) holds \(\mu\)-a.e.\ for
every \(f\in\S^{1,p}(\Y)\). In turn, this implies that there exists
a unique \(L^0(\mm_\X)\)-linear continuous operator
\(P\colon{\rm Ext}\big(\varphi^*L^0(T^*\Y)\big)\to{\rm Ext}
\big(\varphi^*L^0_\mu(T^*\Y)\big)\) satisfying
\[
P\big({\rm ext}([\varphi^*\d f])\big)=
{\rm ext}([\varphi^*\d_\mu f]),\quad\text{ for every }f\in\S^{1,p}(\Y).
\]
Call \(P_*\colon{\rm Ext}\big(\varphi^*L^0_\mu(T^*\Y)\big)^*\to{\rm Ext}
\big(\varphi^*L^0(T^*\Y)\big)^*\) the adjoint operator of \(P\).
Then we claim that
\begin{equation}\label{eq:claim_consist_diff}
\d\varphi=P_*\circ\hat\d\varphi.
\end{equation}
To prove \eqref{eq:claim_consist_diff}, observe that if a function \(f\in\S^{1,p}(\Y)\subset\S^{1,p}(\Y,\mu)\)
is given, then it is possible to choose \(g\coloneqq f\circ\varphi\in\S^{1,p}(\X)\)
in \eqref{eq:def_hat_d_varphi}. Consequently, for any \(f\in\S^{1,p}(\Y)\) and
\(v\in L^0(T\X)\) we have
\[\begin{split}
\big\langle{\rm ext}([\varphi^*\d f]),P_*\big(\hat\d\varphi(v)\big)\big\rangle
&=\big\langle P\big({\rm ext}([\varphi^*\d f])\big),\hat\d\varphi(v)\big\rangle
=\big\langle{\rm ext}([\varphi^*\d_\mu f]),\hat\d\varphi(v)\big\rangle\\
&=\big\langle\d(f\circ\varphi),v\big\rangle
=\big\langle{\rm ext}([\varphi^*\d f]),\d\varphi(v)\big\rangle.
\end{split}\]
This shows that \(\d\varphi=P_*\circ\hat\d\varphi\), thus giving the claim \eqref{eq:claim_consist_diff}.
\subsection{Necessary conditions for quasiconformality}\label{sec:necessary_cond}
\label{ss:nec_QC}
We aim to show that the reflexivity of the
cotangent module is invariant under quasiconformal maps. In other
words, a necessary condition for two spaces \(\X\)
and \(\Y\) to be quasiconformally equivalent is that either both
\(L^p(T^*\X)\) and \(L^p(T^*\Y)\) are reflexive, or that neither
of them is reflexive. This result will be stated in Theorem
\ref{thm:qc_vs_refl}.
\medskip

Given that the family of maps with bounded outer dilatation is
closed under composition, we can easily demonstrate the expected
functorial nature of the associated pullback operator:
\begin{lemma}[Functoriality]\label{lem:functorial}
Let \((\X,\sfd_\X,\mm_\X)\), \((\Y,\sfd_\Y,\mm_\Y)\), and
\(({\rm Z},\sfd_{\rm Z},\mm_{\rm Z})\) be metric measure spaces.
Let \(\varphi\colon\X\to\Y\) and \(\psi\colon\Y\to{\rm Z}\) be continuous mappings satisfying $K_{O}( \varphi ) \leq K$ and $K_{O}( \varphi' ) \leq K'$. Then
\((\psi\circ\varphi)^*=\varphi^*\circ\psi^*\) and $K_{O}( \psi \circ \varphi ) \leq K K'$.
\end{lemma}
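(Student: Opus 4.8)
The plan is to prove the statement in two parts: first the modulus inequality $K_O(\psi\circ\varphi)\le KK'$, and then the identity $(\psi\circ\varphi)^*=\varphi^*\circ\psi^*$.

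For the first part, I would argue directly from the definition of bounded outer dilatation. Let $\Gamma\subset\C(\X)$ be an arbitrary path family. Applying the hypothesis $K_O(\varphi)\le K$ to $\Gamma$ gives $\Mod_p(\Gamma)\le K\,\Mod_p(\varphi\Gamma)$, where $\varphi\Gamma=\{\varphi\circ\gamma:\gamma\in\Gamma\}$ is itself a path family in $\Y$. Applying the hypothesis $K_O(\psi)\le K'$ to the family $\varphi\Gamma$ yields $\Mod_p(\varphi\Gamma)\le K'\,\Mod_p(\psi(\varphi\Gamma))$. Since $\psi(\varphi\Gamma)=(\psi\circ\varphi)\Gamma$, combining the two inequalities gives $\Mod_p(\Gamma)\le KK'\,\Mod_p((\psi\circ\varphi)\Gamma)$ for every $\Gamma$, hence $K_O(\psi\circ\varphi)\le KK'$. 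In particular $\psi\circ\varphi$ has bounded outer dilatation, so Theorem~\ref{thm:pullback_map} applies and $(\psi\circ\varphi)^*\colon L^p(T^*{\rm Z})\to L^p(T^*\X)$ is well-defined; the same theorem provides $\varphi^*$ and $\psi^*$.

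For the identity of operators, both $(\psi\circ\varphi)^*$ and $\varphi^*\circ\psi^*$ are linear and continuous maps from $L^p(T^*{\rm Z})$ to $L^p(T^*\X)$ (the composition is continuous since each factor is, with norm bounded by $(KK')^{1/p}$). By the density statement in Theorem~\ref{thm:pullback_map}—the linear span of elements $h\,\d f$ with $f\in\S^{1,p}({\rm Z})$ and $h\in L^\infty(\mm_{\rm Z})$ is dense in $L^p(T^*{\rm Z})$—it suffices to verify the identity on such generators. Fix $f\in\S^{1,p}({\rm Z})$ and $h\in L^\infty(\mm_{\rm Z})$, with bounded Borel representatives $\bar h$ of $h$ and $\bar f\in\D^{1,p}({\rm Z})$ of $f$. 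Using \eqref{eq:def_pullback} twice, and that $\bar f\circ\psi\in\D^{1,p}(\Y)$ represents the class giving $\d(f\circ\psi)$ while $\pi_\Y(\bar h\circ\psi)\in L^\infty(\mm_\Y)$, we compute
\[
\varphi^*\big(\psi^*(h\,\d f)\big)=\varphi^*\big(\pi_\Y(\bar h\circ\psi)\,\d(f\circ\psi)\big)=\pi_\X\big((\bar h\circ\psi)\circ\varphi\big)\,\d\big((f\circ\psi)\circ\varphi\big).
\]
On the other hand, $(\psi\circ\varphi)^*(h\,\d f)=\pi_\X\big(\bar h\circ(\psi\circ\varphi)\big)\,\d\big(f\circ(\psi\circ\varphi)\big)$. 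Since $(\bar h\circ\psi)\circ\varphi=\bar h\circ(\psi\circ\varphi)$ pointwise and $(\bar f\circ\psi)\circ\varphi=\bar f\circ(\psi\circ\varphi)$ pointwise, and since $\bar f\circ\psi\circ\varphi\in\D^{1,p}(\X)$ by Lemma~\ref{cor:Dirichlet} applied twice (first to $\psi$, then to $\varphi$), both expressions for $\d$ refer to the differential of the same $\mm_\X$-equivalence class, so the two sides agree in $L^p(T^*\X)$.

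The only subtlety—and the step I would be most careful about—is checking that the composed differential $\d\big((f\circ\psi)\circ\varphi\big)$ is genuinely independent of the intermediate representatives and equals $\d\big(f\circ(\psi\circ\varphi)\big)$ as defined via \eqref{eq:def_d_f_comp_phi}. This reduces to: $\bar f\circ\psi$ is a Dirichlet representative on $\Y$ of the class $f\circ\psi\in\S^{1,p}(\Y)$ (which is exactly the content of Lemma~\ref{cor:Dirichlet} plus the discussion following it), and then $(\bar f\circ\psi)\circ\varphi$ is a Dirichlet representative on $\X$ of $(f\circ\psi)\circ\varphi$; by associativity of composition this Borel function equals $\bar f\circ(\psi\circ\varphi)$, which is precisely the representative used to define $\d\big(f\circ(\psi\circ\varphi)\big)$. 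Once this bookkeeping is in place, linearity and continuity extend the identity from generators to all of $L^p(T^*{\rm Z})$, completing the proof.
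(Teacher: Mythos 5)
Your proposal is correct and follows essentially the same route as the paper: the modulus inequality comes from chaining the two dilatation bounds using $\psi\varphi\Gamma=(\psi\circ\varphi)\Gamma$, and the operator identity is checked on the generating elements $h\,\d f$ by applying \eqref{eq:def_pullback} twice, then extended by linearity, continuity, and density. Your extra care in verifying that $(\bar f\circ\psi)\circ\varphi=\bar f\circ(\psi\circ\varphi)$ gives the same differential via Lemma~\ref{cor:Dirichlet} is just a more explicit version of the bookkeeping the paper leaves implicit.
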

\begin{proof}
Given any set \(\Gamma\subset\C(\X)\), it holds that
\(\psi\varphi\Gamma=(\psi\circ\varphi)\Gamma\) and thus $K_{O}( \psi \circ \varphi ) \leq K K'$ follows.

The operator \(\varphi^*\circ\psi^*\colon L^p(T^*{\rm Z})
\to L^p(T^*\X)\) is linear and continuous, as a composition of linear
and continuous maps. Given any \(f\in\S^{1,p}({\rm Z})\) and
\(h\in L^\infty(\mm_{\rm Z})\), with representatives
\(\bar f\in\D^{1,p}({\rm Z})\) and
\(\bar h\in\mathcal L^\infty(\mm_{\rm Z})\), respectively, we
deduce from the definitions of \(\varphi^*\) and \(\psi^*\) that
\[\begin{split}
(\varphi^*\circ\psi^*)(h\,\d f)&=
\varphi^*\big(\pi_\Y(\bar h\circ\psi)\,\d(f\circ\psi)\big)=
\pi_\X\big(\bar h\circ(\psi\circ\varphi)\big)\,
\d\big(f\circ(\psi\circ\varphi)\big).
\end{split}\]
Thanks to the arbitrariness of \(f\in\S^{1,p}({\rm Z})\)
and \(h\in L^\infty(\mm_{\rm Z})\), we conclude that
\((\psi\circ\varphi)^*=\varphi^*\circ\psi^*\).
\end{proof}

By applying Lemma \ref{lem:functorial} to a quasiconformal map
and its inverse, we can immediately deduce that the pullback of a
quasiconformal map is an isomorphism of Banach spaces:
\begin{corollary}\label{cor:pullback_QC}
Let \((\X,\sfd_\X,\mm_\X)\) and \((\Y,\sfd_\Y,\mm_\Y)\) be two
metric measure spaces. Let \(\varphi\colon\X\to\Y\) be a
\(K\)-quasiconformal map. Denote \(\psi\coloneqq\varphi^{-1}\).
Then the pullback maps \(\varphi^*\colon L^p(T^*\Y)\to L^p(T^*\X)\)
and \(\psi^*\colon L^p(T^*\X)\to L^p(T^*\Y)\) are isomorphisms
of Banach spaces, one the inverse of the other. In particular, it holds that
\begin{equation}\label{eq:int_ineq_qc}
\frac{1}{K}\int|\omega|^p\,\d\mm_\Y\leq\int|\varphi^*\omega|^p\,\d\mm_\X\leq
K\int|\omega|^p\,\d\mm_\Y,\quad\text{ for every }\omega\in L^p(T^*\Y).
\end{equation}
\end{corollary}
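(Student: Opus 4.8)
The plan is to obtain the corollary purely formally from the functoriality Lemma \ref{lem:functorial} together with the norm estimate \eqref{eq:int_ineq_pullback} of Theorem \ref{thm:pullback_map}; no new analysis on curves or upper gradients is required. First I would record that a $K$-quasiconformal homeomorphism $\varphi$ satisfies, by definition, $K_O(\varphi)\le K$ and $K_I(\varphi)\le K$, and that for a homeomorphism the substitution $\Gamma\mapsto\psi\Gamma$ rewrites the inner-dilatation inequality \eqref{eq:K_I_geometric} for $\varphi$ as the outer-dilatation inequality for $\psi=\varphi^{-1}$; hence $K_O(\psi)=K_I(\varphi)\le K$, so both $\varphi$ and $\psi$ fall under the hypotheses of Theorem \ref{thm:pullback_map}. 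This produces linear continuous operators $\varphi^*\colon L^p(T^*\Y)\to L^p(T^*\X)$ and $\psi^*\colon L^p(T^*\X)\to L^p(T^*\Y)$ with $\|\varphi^*\omega\|_{L^p(T^*\X)}\le K^{1/p}\|\omega\|_{L^p(T^*\Y)}$ and, symmetrically, $\|\psi^*\eta\|_{L^p(T^*\Y)}\le K^{1/p}\|\eta\|_{L^p(T^*\X)}$.

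Next I would compute the pullback of an identity map. The map ${\rm id}_\X$ trivially has bounded outer dilatation with $K_O({\rm id}_\X)=1$, and the operator ${\rm id}_{L^p(T^*\X)}$ manifestly satisfies the characterizing relation \eqref{eq:def_pullback} for ${\rm id}_\X$, since $\pi_\X(\bar h\circ{\rm id}_\X)\,\d(f\circ{\rm id}_\X)=h\,\d f$. By the uniqueness clause of Theorem \ref{thm:pullback_map} we conclude $({\rm id}_\X)^*={\rm id}_{L^p(T^*\X)}$, and likewise $({\rm id}_\Y)^*={\rm id}_{L^p(T^*\Y)}$. The main step is then to apply Lemma \ref{lem:functorial} to the composable pairs $(\varphi,\psi)$ and $(\psi,\varphi)$: since $\psi\circ\varphi={\rm id}_\X$ and $\varphi\circ\psi={\rm id}_\Y$, this yields $\varphi^*\circ\psi^*=(\psi\circ\varphi)^*={\rm id}_{L^p(T^*\X)}$ and $\psi^*\circ\varphi^*=(\varphi\circ\psi)^*={\rm id}_{L^p(T^*\Y)}$. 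Hence $\varphi^*$ and $\psi^*$ are mutually inverse linear homeomorphisms, i.e. isomorphisms of Banach spaces, one the inverse of the other.

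For the quantitative estimate \eqref{eq:int_ineq_qc}, the right-hand inequality is exactly \eqref{eq:int_ineq_pullback} applied to $\varphi$. For the left-hand one I would write $\omega=\psi^*(\varphi^*\omega)$ and apply \eqref{eq:int_ineq_pullback} to $\psi$, obtaining $\int|\omega|^p\,\d\mm_\Y=\int|\psi^*(\varphi^*\omega)|^p\,\d\mm_\Y\le K\int|\varphi^*\omega|^p\,\d\mm_\X$, which upon division by $K$ is precisely the asserted lower bound.

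I do not expect any genuine obstacle: the corollary is essentially a bookkeeping consequence of results already established. The only points worth a line of justification are the identity $K_O(\varphi^{-1})=K_I(\varphi)$ for homeomorphisms and the fact that the pullback of an identity map is the identity operator — both immediate — after which functoriality does all the work.
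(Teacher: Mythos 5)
Your proposal is correct and follows essentially the same route as the paper: identify \(({\rm id})^*\) with the identity operator, invoke the functoriality Lemma \ref{lem:functorial} with \(\varphi\circ\psi={\rm id}_\Y\) and \(\psi\circ\varphi={\rm id}_\X\) to get that \(\varphi^*\) and \(\psi^*\) are mutually inverse, and derive \eqref{eq:int_ineq_qc} by applying \eqref{eq:int_ineq_pullback} to \(\varphi\) and to \(\psi\) via \(\omega=\psi^*(\varphi^*\omega)\). Your extra remarks justifying \(K_O(\varphi^{-1})=K_I(\varphi)\) and \(({\rm id})^*={\rm id}\) are small details the paper leaves implicit, and they are handled correctly.
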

\begin{proof}
First, note that \({\rm id}_\X^*={\rm id}_{L^p(T^*\X)}\)
and \({\rm id}_\Y^*={\rm id}_{L^p(T^*\Y)}\). Hence, combining
Lemma \ref{lem:functorial} with the fact that \(\varphi\circ\psi
={\rm id}_\Y\) and \(\psi\circ\varphi={\rm id}_\X\), we get
\(\psi^*=(\varphi^*)^{-1}\). Moreover, the second inequality in
\eqref{eq:int_ineq_qc} is exactly \eqref{eq:int_ineq_pullback}
applied to \(\varphi\), while the first one follows from
\eqref{eq:int_ineq_pullback} applied to \(\psi\):
\[
\int|\omega|^p\,\d\mm_\Y=\int\big|\psi^*(\varphi^*\omega)\big|^p\,\d\mm_\Y
\leq K\int|\varphi^*\omega|^p\,\d\mm_\X,
\quad\text{ for every }\omega\in L^p(T^*\Y).
\]
This completes the proof of the statement.
\end{proof}
\begin{theorem}[Quasiconformality and reflexivity]
\label{thm:qc_vs_refl}
Let \((\X,\sfd_\X,\mm_\X)\) and \((\Y,\sfd_\Y,\mm_\Y)\) be two metric
measure spaces with \(L^p(T^*\X)\) reflexive. Suppose there exists
a quasiconformal map \(\varphi\colon\X\to\Y\). Then \(L^p(T^*\Y)\)
is reflexive.
\end{theorem}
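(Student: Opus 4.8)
The plan is to exploit Corollary \ref{cor:pullback_QC}, which provides a Banach space isomorphism $\varphi^*\colon L^p(T^*\Y)\to L^p(T^*\X)$. Reflexivity of a Banach space is preserved under linear isomorphisms — indeed, it is preserved under any topological linear isomorphism, since such a map together with its adjoint implements an isomorphism between the biduals compatible with the canonical embeddings. Thus the essential point is already packaged in the corollary; the theorem is a formal consequence.

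Concretely, I would argue as follows. By hypothesis $L^p(T^*\X)$ is reflexive. By Corollary \ref{cor:pullback_QC}, the map $\varphi^*\colon L^p(T^*\Y)\to L^p(T^*\X)$ is a bijective, bounded, linear operator whose inverse $\psi^*=(\varphi^*)^{-1}$ (with $\psi=\varphi^{-1}$) is also bounded and linear; the norm bounds \eqref{eq:int_ineq_qc} make both operators bounded. A standard fact in Banach space theory states that if $T\colon E\to F$ is a bounded linear bijection with bounded inverse and $F$ is reflexive, then $E$ is reflexive: one checks that $J_E = (T^{**})^{-1}\circ J_F\circ T$, where $J_E$, $J_F$ are the canonical embeddings into the biduals and $T^{**}$ is the bidual adjoint, which is itself an isomorphism; surjectivity of $J_F$ then forces surjectivity of $J_E$. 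Applying this with $E=L^p(T^*\Y)$, $F=L^p(T^*\X)$, and $T=\varphi^*$ yields that $L^p(T^*\Y)$ is reflexive.

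There is essentially no obstacle here: the entire content has been isolated into Corollary \ref{cor:pullback_QC}, and what remains is the invariance of reflexivity under Banach space isomorphisms, which is classical. If one wanted a self-contained argument avoiding the bidual bookkeeping, one could instead use the characterisation of reflexivity via weak compactness of the closed unit ball (Kakutani's theorem): the unit ball of $L^p(T^*\Y)$ is carried by the continuous linear bijection $\varphi^*$ into a bounded set of $L^p(T^*\X)$, whose weak closure is weakly compact by reflexivity of $L^p(T^*\X)$; pulling back via the weak-to-weak continuous $\psi^*=(\varphi^*)^{-1}$ shows the unit ball of $L^p(T^*\Y)$ is weakly compact, hence $L^p(T^*\Y)$ is reflexive. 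Either route is short; I would simply cite the isomorphism-invariance of reflexivity and conclude.
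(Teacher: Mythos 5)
Your proposal is correct and follows essentially the same route as the paper: both reduce the statement to Corollary \ref{cor:pullback_QC} and then invoke the invariance of reflexivity of Banach spaces under isomorphism (the paper phrases this as preservation under passage to an equivalent norm). The extra detail you give on the bidual or Kakutani arguments is fine but not needed beyond citing this classical fact.
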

\begin{proof}
This immediately follows from Corollary \ref{cor:pullback_QC},
taking into account the fact that reflexivity of Banach
spaces is preserved under the passage to an equivalent norm.
\end{proof}
\begin{theorem}[Quasiconformality and dimensional decomposition]
\label{thm:qc_vs_dimension}
Let \((\X,\sfd_\X,\mm_\X)\) and \((\Y,\sfd_\Y,\mm_\Y)\) be metric
measure spaces and $( D_{i} )_{ i = 0 }^{ \infty }$ a dimensional decomposition of $L^{p}( T^{*}\Y )$.

Suppose there exists a quasiconformal homeomorphism \(\varphi\colon\X\to\Y\). Then there exists a Borel set $E_{0} \supset D_{0}$ with $\mm_{\Y}( E_{0} \setminus D_0 ) = 0$ such that $D_{0}' = \varphi^{-1}( E_0 )$, $D_{i}' = \varphi^{-1}( D_i \setminus E_0 )$ defines a dimensional decomposition of $L^{p}( T^{*}\X )$, with $D_{i}'$ having dimension $i$ for each $i = 0, 1, \dots, \infty$, and that $\varphi|_{ \X \setminus E_{0} }$ satisfies Lusin's Conditions ($N$) and ($N^{-1}$).
\end{theorem}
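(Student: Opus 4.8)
Everything is run through the Banach isomorphism $\varphi^{*}\colon L^{p}(T^{*}\Y)\to L^{p}(T^{*}\X)$ and its inverse $\psi^{*}$ ($\psi:=\varphi^{-1}$) from Corollary \ref{cor:pullback_QC}, together with the bound \eqref{eq:ineq_pullback} and the energy--distortion inequality \eqref{eq:inequalitygood:measure} of Proposition \ref{lemm:energydistortion}(3) applied to $\varphi$ and $\psi$. First I would record the module compatibility of the pullback: for Borel $E\subseteq\Y$, $h\in L^{\infty}(\mm_{\Y})$ with bounded Borel representative $\bar h$, and $\omega\in L^{p}(T^{*}\Y)$, one has $\varphi^{*}(\1_{E}^{\mm_{\Y}}\omega)=\1_{\varphi^{-1}(E)}^{\mm_{\X}}\varphi^{*}\omega$ and $\varphi^{*}(h\omega)=\pi_{\X}(\bar h\circ\varphi)\,\varphi^{*}\omega$. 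These hold on the dense set of finite sums $\sum_{i}h_{i}\,\d f_{i}$ by \eqref{eq:def_pullback} and extend by continuity; the only subtlety is well-posedness of the right-hand sides, which holds because $\mm_{\Y}(N)=0$ forces $\int_{\varphi^{-1}(N)}\rho_{\varphi}^{p}\,\d\mm_{\X}\leq K\mm_{\Y}(N)=0$ by \eqref{eq:inequalitygood:measure}, hence $\rho_{\varphi}=0$ $\mm_{\X}$-a.e.\ on $\varphi^{-1}(N)$, whence $\1_{\varphi^{-1}(N)}^{\mm_{\X}}\varphi^{*}\omega=0$ by \eqref{eq:ineq_pullback}. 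The same reasoning gives $\1_{\{\rho_{\varphi}=0\}}^{\mm_{\X}}\varphi^{*}\omega=0$ for all $\omega$, so by surjectivity of $\varphi^{*}$ one has $\1_{\{\rho_{\varphi}=0\}}L^{p}(T^{*}\X)=0$; symmetrically $\1_{\{\rho_{\psi}=0\}}L^{p}(T^{*}\Y)=0$, and applying $\psi^{*}$ to $\1_{\{\rho_{\varphi}=0\}}^{\mm_{\X}}\eta$ gives $\1_{\varphi(\{\rho_{\varphi}=0\})}L^{p}(T^{*}\Y)=0$. I fix Borel representatives of $\rho_{\varphi}$ and $\rho_{\psi}$ once and for all.

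\textbf{The set $E_{0}$ and the Lusin conditions.} A routine fact about dimensional decompositions is: if $A\subseteq\Y$ is Borel with $\1_{A}L^{p}(T^{*}\Y)=0$, then $\mm_{\Y}(A\setminus D_{0})=0$. (For finite $i\geq1$, plug a basis of $L^{p}(T^{*}\Y)$ on $D_{i}$ into the independence relation on $D_{i}$ with all coefficients $\1_{A\cap D_{i}}^{\mm_{\Y}}$ to get $\mm_{\Y}(A\cap D_{i})=0$; and $\mm_{\Y}(A\cap D_{\infty})=0$ since otherwise $A\cap D_{\infty}$ would be a positive-measure subset of $D_{\infty}$ on which $L^{p}(T^{*}\Y)$ is $0$-dimensional.) Applying this, $\varphi(\{\rho_{\varphi}=0\})$ and $\{\rho_{\psi}=0\}$ lie in $D_{0}$ up to $\mm_{\Y}$-null sets, so $E_{0}:=D_{0}\cup\varphi(\{\rho_{\varphi}=0\})\cup\{\rho_{\psi}=0\}$ is Borel, contains $D_{0}$, and satisfies $\mm_{\Y}(E_{0}\setminus D_{0})=0$. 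For condition $(N^{-1})$ of $\varphi$ restricted to $\X\setminus\varphi^{-1}(E_{0})$: if $B\subseteq\Y\setminus E_{0}$ is Borel with $\mm_{\Y}(B)=0$, then $\rho_{\varphi}=0$ $\mm_{\X}$-a.e.\ on $\varphi^{-1}(B)$ by \eqref{eq:inequalitygood:measure}, while $\varphi^{-1}(B)\cap\{\rho_{\varphi}=0\}=\emptyset$ because $\{\rho_{\varphi}=0\}=\varphi^{-1}(\varphi(\{\rho_{\varphi}=0\}))\subseteq\varphi^{-1}(E_{0})$ is disjoint from $\varphi^{-1}(\Y\setminus E_{0})\supseteq\varphi^{-1}(B)$; therefore $\mm_{\X}(\varphi^{-1}(B))=0$. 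Condition $(N)$ is symmetric, using \eqref{eq:inequalitygood:measure} for $\psi$ and $\{\rho_{\psi}=0\}\subseteq E_{0}$.

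\textbf{Transfer of the dimensional decomposition.} Set $D_{0}':=\varphi^{-1}(E_{0})$ and $D_{i}':=\varphi^{-1}(D_{i}\setminus E_{0})$; as $\varphi$ is a homeomorphism and $\{E_{0}\}\cup\{D_{i}\setminus E_{0}\}_{i\geq1,\infty}$ is a Borel decomposition of $\Y$, $(D_{i}')$ is a Borel decomposition of $\X$. The part $D_{0}'$ is $0$-dimensional: $\varphi^{-1}(E_{0})$ is covered by $\{\rho_{\varphi}=0\}$ (trivial module, first step), by $\varphi^{-1}(D_{0})$ (where $\1_{\varphi^{-1}(D_{0})}^{\mm_{\X}}\varphi^{*}\omega=\varphi^{*}(\1_{D_{0}}^{\mm_{\Y}}\omega)=0$, so $\1_{\varphi^{-1}(D_{0})}L^{p}(T^{*}\X)=0$ by surjectivity), and by $\varphi^{-1}(\{\rho_{\psi}=0\})$ (contained up to $\mm_{\X}$-null in the previous two, by $\{\rho_{\psi}=0\}\subseteq D_{0}$ mod $\mm_{\Y}$-null and \eqref{eq:inequalitygood:measure}). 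For finite $i\geq1$: since $D_{i}\cap E_{0}\subseteq D_{i}\cap(E_{0}\setminus D_{0})$ is $\mm_{\Y}$-null, $F_{i}:=D_{i}\setminus E_{0}$ equals $D_{i}$ up to $\mm_{\Y}$-null, so any basis $\omega_{1},\dots,\omega_{i}$ of $L^{p}(T^{*}\Y)$ on $D_{i}$ is also one on $F_{i}$, and I claim $\varphi^{*}\omega_{1},\dots,\varphi^{*}\omega_{i}$ is a basis of $L^{p}(T^{*}\X)$ on $D_{i}'=\varphi^{-1}(F_{i})$. Spanning: given $\eta\in L^{p}(T^{*}\X)$, write $\eta=\varphi^{*}\omega$, so $\1_{D_{i}'}^{\mm_{\X}}\eta=\varphi^{*}(\1_{F_{i}}^{\mm_{\Y}}\omega)$; expanding $\1_{F_{i}}^{\mm_{\Y}}\omega\in\textrm{Span}_{F_{i}}(\omega_{1},\dots,\omega_{i})$ and applying the continuous linear $\varphi^{*}$ termwise (module compatibility, plus that $\varphi$ carries a Borel partition of $F_{i}$ to one of $D_{i}'$) realises $\1_{D_{i}'}^{\mm_{\X}}\eta$ in $\textrm{Span}_{D_{i}'}(\varphi^{*}\omega_{1},\dots,\varphi^{*}\omega_{i})$. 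Independence: by $(N)$, $(N^{-1})$ off $E_{0}$ and bijectivity of $\varphi$, the map $h\mapsto\pi_{\X}(\bar h\circ\varphi)$ is a ring isomorphism between the $L^{\infty}$-classes supported in $F_{i}$ and those supported in $D_{i}'$; hence a relation $\1_{D_{i}'}^{\mm_{\X}}\sum_{j}g_{j}\varphi^{*}\omega_{j}=0$ becomes, after replacing $g_{j}$ by $\1_{D_{i}'}^{\mm_{\X}}g_{j}=\pi_{\X}(\bar h_{j}\circ\varphi)$ and using $\varphi^{*}(h_{j}\omega_{j})=\pi_{\X}(\bar h_{j}\circ\varphi)\varphi^{*}\omega_{j}$ and injectivity of $\varphi^{*}$, the relation $\1_{F_{i}}^{\mm_{\Y}}\sum_{j}h_{j}\omega_{j}=0$, forcing $h_{j}=0$ on $F_{i}$ and so $g_{j}=0$ on $D_{i}'$. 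Finally $D_{\infty}'$ is $\infty$-dimensional: were $\1_{F'}L^{p}(T^{*}\X)$ finite-dimensional for some Borel $F'\subseteq D_{\infty}'$ of positive $\mm_{\X}$-measure, then $F:=\varphi(F')\subseteq D_{\infty}\setminus E_{0}$ has $\mm_{\Y}(F)>0$ by $(N^{-1})$ off $E_{0}$, and the same basis argument with $\psi$ in place of $\varphi$ (valid since $F'\subseteq\X\setminus\varphi^{-1}(E_{0})$, where $(N)$, $(N^{-1})$ hold) would make $\1_{F}L^{p}(T^{*}\Y)$ finite-dimensional, contradicting $\infty$-dimensionality of $\1_{D_{\infty}}L^{p}(T^{*}\Y)$. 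Thus $(D_{i}')$ is a dimensional decomposition of $L^{p}(T^{*}\X)$ with $D_{i}'$ of dimension $i$.

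\textbf{Expected main obstacle.} The delicate part is the interplay of the two module structures over $\mm_{\X}$ and $\mm_{\Y}$: $\varphi^{*}$ is only a morphism over the ring homomorphism $h\mapsto\pi_{\X}(\bar h\circ\varphi)$, which is in general neither injective nor surjective on $L^{\infty}$-classes, so $\textrm{Span}$ and independence cannot be transported freely; this is precisely what forces $E_{0}$ into the picture, and with it the apparent circularity that $(N)$, $(N^{-1})$ are needed to run the dimension transfer yet also appear in the conclusion. The resolution is the order above: pin $E_{0}$ down using only \eqref{eq:ineq_pullback} and \eqref{eq:inequalitygood:measure}, derive $(N)$, $(N^{-1})$ off $E_{0}$ from these, and only then invoke them to upgrade $\varphi^{*}$ to an $L^{\infty}$-module isomorphism between $\1_{\Y\setminus E_{0}}L^{p}(T^{*}\Y)$ and $\1_{\X\setminus\varphi^{-1}(E_{0})}L^{p}(T^{*}\X)$, along which the dimensions of Borel strata are preserved.
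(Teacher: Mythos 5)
Your proposal is correct, and it shares the overall architecture of the paper's argument: isolate an exceptional zero-dimensional Borel set \(E_0\supset D_0\) with \(\mm_\Y(E_0\setminus D_0)=0\), derive Lusin's conditions \((N)\) and \((N^{-1})\) off it from \eqref{eq:inequalitygood:measure} applied to \(\varphi\) and \(\varphi^{-1}\), and then transfer independence, spans and bases stratum by stratum through the isomorphism \(\varphi^*\). Where you genuinely differ is in how the exceptional set is produced and certified. The paper's Lemma \ref{lemma:zerodimensionality} builds it intrinsically from the vanishing sets \(E_\Y=\{\rho_{\mathrm{id}_\Y}=0\}\) and \(E_\X=\{\rho_{\mathrm{id}_\X}=0\}\) of the minimal weak upper gradients of the identity maps, proves via the cut-offs \(\eta_{n,k}\) and Proposition \ref{prop:equiv_wug} that these absorb \emph{every} zero-dimensional set up to null sets, and obtains the Lusin conditions from the two-sided comparison \eqref{eq:distortioninequality} between \(\rho_\varphi\) and \(\rho_{\mathrm{id}_\Y}\). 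You instead take \(E_0=D_0\cup\varphi(\{\rho_\varphi=0\})\cup\{\rho_{\varphi^{-1}}=0\}\), certify zero-dimensionality by combining the pointwise bound \eqref{eq:ineq_pullback} with the surjectivity of \(\varphi^*\) and \(\psi^*\) from Corollary \ref{cor:pullback_QC}, and replace the maximality statement by the observation (which the paper also uses implicitly when asserting \(\mm_\Y(E_0\setminus D_0)=0\)) that any zero-dimensional Borel set meets each \(D_i\), \(i\neq 0\), in an \(\mm_\Y\)-null set; by \eqref{eq:distortioninequality} your sets coincide with the paper's up to null sets, so the two routes land on essentially the same \(E_0\). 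What your route buys is economy: it bypasses the identity-map gradients and Proposition \ref{prop:equiv_wug} entirely. What the paper's lemma buys is a decomposition-independent, intrinsic description of the maximal zero-dimensional sets in both spaces, which is a reusable structural statement. Finally, you spell out what the paper's Lemma \ref{lemma:dimensionpreservation} declares immediate — the \(L^\infty\)-module compatibility \(\varphi^*(\1_E^{\mm_\Y}h\,\omega)=\1_{\varphi^{-1}(E)}^{\mm_\X}\pi_\X(\bar h\circ\varphi)\,\varphi^*\omega\) together with the fact that, thanks to \((N)\) and \((N^{-1})\) off \(E_0\), \(h\mapsto\pi_\X(\bar h\circ\varphi)\) is an isomorphism between the \(L^\infty\)-classes supported in \(D_i\setminus E_0\) and in \(\varphi^{-1}(D_i\setminus E_0)\) — and this is exactly the right justification for the preservation of independence and spans, including the handling of the \(\infty\)-dimensional stratum by running the same argument for \(\psi=\varphi^{-1}\).
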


\begin{remark}{\rm
Theorem \ref{thm:qc_vs_dimension} is sharp in the following sense: Sometimes the $E_{0}$ in Theorem \ref{thm:qc_vs_dimension} is strictly larger than $D_0$. To see why, let $\X$ denote the Euclidean plane $\mathbb{R}^{2}$ endowed with the Lebesgue measure. We consider the dimensional decomposition $D_2 = \mathbb{R}^{2}$ for $L^{2}( T^{*}\X )$. Now, there exists a metric measure space $\Y$ and a quasiconformal homeomorphism $\varphi \colon \mathbb{R}^{2} \rightarrow \Y$ onto a metric measure space $\Y$, where $L^{2}( T^{*}\Y )$ has a dimensional decomposition $D_{0}', D_{2}'$ with $\mm_{\Y}( D_{0}' ) > 0$. In fact, we may take $\Y \subset \mathbb{R}^{3}$ such that $D_{0}'$ is a (self-similar) purely $2$-unrectifiable Cantor set and $D_{2}' = \Y \setminus D_{0}'$ a smooth surface, with $\mm_{\Y}$ being the $2$-dimensional Hausdorff measure restricted to $\Y$; such a $\Y$ appears in \cite[Proposition 17.1]{Raj:17}. A weaker version of Theorem \ref{thm:qc_vs_dimension} was observed in \cite[Lemma 3.2 and Remark 3.4]{Iko:21:unif} by the first named author.
\fr}\end{remark}
\begin{lemma}\label{lemma:zerodimensionality}
Let $\varphi \colon \X \rightarrow \Y$ be a quasiconformal homeomorphism. Then there exists a Borel set $E_1 \subset \Y$ such that
\begin{itemize}
	\item $\1_{E_1}L^{p}( T^{*}\Y )$ and $\1_{\varphi^{-1}(E_1)}L^{p}( T^{*}\X )$ are zero-dimensional.
	\item If $F \subset \Y$ is zero-dimensional, then $\mm_{\Y}( F \setminus E_1 ) = 0$ and $F' \subset \X$ zero-dimensional, then $\mm_{\X}( F' \setminus \varphi^{-1}( E_1 ) ) = 0$.
	\item $\varphi|_{ \X \setminus \varphi^{-1}( E_1 ) }$ satisfies Lusin's Condition ($N$) and ($N^{-1}$).
\end{itemize}
\end{lemma}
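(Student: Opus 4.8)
The plan is to build \(E_{1}\) from the dimensional decompositions of the two cotangent modules, exploiting that by Corollary \ref{cor:pullback_QC} the pullback \(\varphi^{*}\colon L^{p}(T^{*}\Y)\to L^{p}(T^{*}\X)\) is a Banach space isomorphism which, in addition, intertwines multiplication by indicators: \(\varphi^{*}\big(\1_{E}^{\mm_{\Y}}\omega\big)=\1_{\varphi^{-1}(E)}^{\mm_{\X}}\varphi^{*}\omega\) for every Borel \(E\subset\Y\) and every \(\omega\in L^{p}(T^{*}\Y)\). I would first check this identity on the dense subspace \(\mathcal V\) appearing in \eqref{eq:def_pullback_on_simple_forms} — noting that \(\1_{E}^{\mm_{\Y}}\omega\) again belongs to \(\mathcal V\) upon adjoining the piece \(\Y\setminus E\) carrying the zero differential — and then propagate it to all of \(L^{p}(T^{*}\Y)\) by continuity of \(\varphi^{*}\) and of multiplication by \(\1_{E}^{\mm_{\Y}}\). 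It is worth stressing that this requires no absolute continuity of \(\varphi_{*}\mm_{\X}\) with respect to \(\mm_{\Y}\): the well-posedness is guaranteed by Lemma \ref{cor:Dirichlet} (equivalently \eqref{eq:inequalitygood:measure}), which forces \(\1_{\varphi^{-1}(N)}^{\mm_{\X}}\d(f\circ\varphi)=0\) whenever \(\mm_{\Y}(N)=0\).

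Next I would fix dimensional decompositions \((D_{i}^{\Y})_{i=0}^{\infty}\), \((D_{i}^{\X})_{i=0}^{\infty}\) of \(L^{p}(T^{*}\Y)\), \(L^{p}(T^{*}\X)\) (these exist by \cite[Proposition 1.4.5]{Gig:18}) and record the maximality of the \(0\)-dimensional pieces: if \(F\subset\Y\) is Borel with \(\1_{F}L^{p}(T^{*}\Y)\) zero-dimensional, then \(\mm_{\Y}(F\setminus D_{0}^{\Y})=0\), since otherwise for some \(i\geq1\) the module \(\1_{F\cap D_{i}^{\Y}}L^{p}(T^{*}\Y)\) would be simultaneously \(i\)-dimensional (a basis on \(D_{i}^{\Y}\) restricts to a basis on \(F\cap D_{i}^{\Y}\)) and zero-dimensional on a set of positive measure, which is impossible; symmetrically on \(\X\). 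I would then set
\[
E_{1}\coloneqq D_{0}^{\Y}\cup\varphi\big(D_{0}^{\X}\big),
\]
which is Borel because \(\varphi\) is a homeomorphism, so that \(\varphi^{-1}(E_{1})=\varphi^{-1}(D_{0}^{\Y})\cup D_{0}^{\X}\). The first bullet then follows from the intertwining identity and bijectivity of \(\varphi^{*}\): for \(\1_{E_{1}}L^{p}(T^{*}\Y)\) the part \(D_{0}^{\Y}\) is zero-dimensional by definition, while \(\varphi^{*}\big(\1_{\varphi(D_{0}^{\X})}^{\mm_{\Y}}\omega\big)=\1_{D_{0}^{\X}}^{\mm_{\X}}\varphi^{*}\omega=0\) together with injectivity of \(\varphi^{*}\) gives \(\1_{\varphi(D_{0}^{\X})}^{\mm_{\Y}}\omega=0\) for all \(\omega\); dually, for \(\1_{\varphi^{-1}(E_{1})}L^{p}(T^{*}\X)\) the part \(D_{0}^{\X}\) is zero-dimensional by definition, while any \(\eta\in L^{p}(T^{*}\X)\) equals \(\varphi^{*}\omega\) for some \(\omega\) by surjectivity, whence \(\1_{\varphi^{-1}(D_{0}^{\Y})}^{\mm_{\X}}\eta=\varphi^{*}\big(\1_{D_{0}^{\Y}}^{\mm_{\Y}}\omega\big)=0\). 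The second bullet is then immediate from the recorded maximality, because \(E_{1}\supset D_{0}^{\Y}\) and \(\varphi^{-1}(E_{1})\supset D_{0}^{\X}\).

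For Lusin's conditions the key point is that \(\{\rho_{\varphi}=0\}\subset D_{0}^{\X}\) and \(\{\rho_{\psi}=0\}\subset D_{0}^{\Y}\) up to null sets, where \(\psi\coloneqq\varphi^{-1}\) is again quasiconformal (so Theorem \ref{thm:pullback_map} and Corollary \ref{cor:pullback_QC} apply to it as well). Indeed \eqref{eq:ineq_pullback} gives \(\1_{\{\rho_{\varphi}=0\}}^{\mm_{\X}}\varphi^{*}\omega=0\) for every \(\omega\), so surjectivity of \(\varphi^{*}\) makes \(\{\rho_{\varphi}=0\}\) a zero-dimensional set for \(L^{p}(T^{*}\X)\), and the inclusion follows from maximality; the statement for \(\psi\) is symmetric. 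Consequently \(\rho_{\varphi}>0\) \(\mm_{\X}\)-a.e.\ on \(\X\setminus\varphi^{-1}(E_{1})\) and \(\rho_{\psi}>0\) \(\mm_{\Y}\)-a.e.\ on \(\Y\setminus E_{1}\). Given \(A\subset\X\setminus\varphi^{-1}(E_{1})\) with \(\mm_{\X}(A)=0\), injectivity and surjectivity of \(\varphi\) give \(\varphi(A)\subset\Y\setminus E_{1}\), and Proposition \ref{lemm:energydistortion}(3) applied to \(\psi\) (with \(K_{O}(\psi)=K_{I}(\varphi)\leq K\)) yields \(\int_{\varphi(A)}\rho_{\psi}^{p}\,\d\mm_{\Y}=\int_{\Y}\1_{A}\circ\psi\,\rho_{\psi}^{p}\,\d\mm_{\Y}\leq K\mm_{\X}(A)=0\); since \(\rho_{\psi}>0\) \(\mm_{\Y}\)-a.e.\ on \(\varphi(A)\), this forces \(\mm_{\Y}(\varphi(A))=0\), i.e.\ Condition (\(N\)). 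Symmetrically, for \(B\subset\Y\setminus E_{1}\) with \(\mm_{\Y}(B)=0\), \eqref{eq:inequalitygood:measure} gives \(\int_{\varphi^{-1}(B)}\rho_{\varphi}^{p}\,\d\mm_{\X}\leq K\mm_{\Y}(B)=0\), and \(\rho_{\varphi}>0\) \(\mm_{\X}\)-a.e.\ on \(\varphi^{-1}(B)\subset\X\setminus\varphi^{-1}(E_{1})\) forces \(\mm_{\X}(\varphi^{-1}(B))=0\), i.e.\ Condition (\(N^{-1}\)).

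The step I expect to be most delicate is establishing the intertwining identity and the zero-dimensionality of \(\{\rho_{\varphi}=0\}\) and \(\{\rho_{\psi}=0\}\) while keeping in mind that \(\varphi_{*}\mm_{\X}\) is, a priori, only absolutely continuous with respect to \(\mm_{\Y}\) on \(\{\rho_{\varphi}>0\}\); every manipulation must be routed through \eqref{eq:ineq_pullback}, \eqref{eq:inequalitygood:measure} and Lemma \ref{cor:Dirichlet} rather than through a naive change of variables. Once these facts are in place, the remainder is bookkeeping with the dimensional decompositions and the measure inequalities.
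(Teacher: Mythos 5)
Your proof is correct, but it takes a somewhat different route from the paper's. The paper identifies the maximal zero-dimensional sets concretely: it sets \(E_\Y=\{\rho_{{\rm id}_\Y}=0\}\) (and \(E_\X\) analogously), shows via Lemma \ref{lemm:pullbackgradient} that every Borel subset of \(E_\Y\) is zero-dimensional, and proves maximality by showing \(\sup_{n,k}\rho_{\eta_{n,k}}=\rho_{{\rm id}_\Y}\) through Proposition \ref{prop:equiv_wug}; it then takes \(E_1=E_\Y\cup\varphi(E_\X)\) and obtains Lusin's conditions from the two-sided comparison \eqref{eq:distortioninequality} between \(\int_F\rho_{{\rm id}_\Y}^p\,\d\mm_\Y\) and \(\int_{\varphi^{-1}(F)}\rho_\varphi^p\,\d\mm_\X\), which shows that \(\1_{\{\rho_\varphi>0\}}\mm_\X\) and \(\1_{\X\setminus\varphi^{-1}(E_1)}\mm_\X\) have the same null sets. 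You instead work abstractly with the dimensional decompositions, take \(E_1=D_0^\Y\cup\varphi(D_0^\X)\), prove maximality of the \(0\)-dimensional pieces by a purely module-theoretic argument, transfer zero-dimensionality through the intertwining identity \(\varphi^*(\1_E^{\mm_\Y}\omega)=\1_{\varphi^{-1}(E)}^{\mm_\X}\varphi^*\omega\) and the bijectivity of \(\varphi^*\) (Corollary \ref{cor:pullback_QC}), and get Lusin's conditions from the zero-dimensionality of \(\{\rho_\varphi=0\}\) and \(\{\rho_{\varphi^{-1}}=0\}\) combined with \eqref{eq:inequalitygood:measure} for \(\varphi\) and for \(\varphi^{-1}\). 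Since \(D_0^\Y\) and \(E_\Y\) agree up to \(\mm_\Y\)-null sets, the two constructions produce essentially the same set; your version avoids the explicit computation of \(\rho_{{\rm id}}\) via Proposition \ref{prop:equiv_wug} but leans more heavily on the isomorphism property of \(\varphi^*\), and it meshes directly with the way the lemma is used in Theorem \ref{thm:qc_vs_dimension}. Two minor points: in your maximality argument the case \(\mm_\Y(F\cap D_\infty^\Y)>0\) should be excluded by the definition of \(\infty\)-dimensionality (no positive-measure subset is \(k\)-dimensional for finite \(k\), in particular not \(0\)-dimensional) rather than by restricting a basis, which does not exist there; and for Condition (\(N\)) on arbitrary null sets one should pass to Borel hulls intersected with \(\X\setminus\varphi^{-1}(E_1)\) before invoking Proposition \ref{lemm:energydistortion}\,(3). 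Both are trivial to repair.
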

\begin{proof}
Fix a representative $\rho_{ \mathrm{id}_{\Y} }$ of $|D\mathrm{id}_{\Y}|$ and $E_{\Y} = \left\{ \rho_{ \mathrm{id}_{\Y} } = 0 \right\}$. It follows from Lemma \ref{lemm:pullbackgradient} that $\rho_{ h } = 0$ $\mm_{\Y}$-almost everywhere in $E_{\Y}$ for every $h \in \D^{1,p}( \Y )$. Hence every Borel subset of $E_{\Y}$ is zero-dimensional. Conversely, if $E \subset \Y$ is zero-dimensional, then $\rho_{h} = 0$ $\mm_{\Y}$-almost everywhere in $E$ for every $h \in \D^{1,p}( \Y )$. We claim that $\mm_{\Y}( E \setminus E_{\Y} ) = 0$. To see why, let $( y_n )_{ n = 1 }^{ \infty } \subset \Y$ be dense and the $\eta_{n,k}$ the $1$-Lipschitz mappings from Proposition \ref{prop:equiv_wug}. Then $\rho_{ \mathrm{id}_{\Y} } \geq \rho_{ \eta_{n,k} } \eqqcolon g_{n,k}$ $\mm_{\Y}$-almost everywhere. Also, $g \coloneqq \sup_{k} \sup_{n} g_{n,k} \leq \rho_{ \mathrm{id}_{\Y} }$ $\mm_{\Y}$-almost everywhere. It follows from Proposition \ref{prop:equiv_wug} that $g \in \UG_{\loc}^{p}( \mathrm{id}_{\Y} )$. Hence $g = \rho_{ \mathrm{id}_{\Y} }$ $\mm_{\Y}$-almost everywhere. Since $g$ vanishes $\mm_{\Y}$-almost everywhere in $E$, we conclude that $\mm_{\Y}( E \setminus E_{\Y} ) = 0$.

Define $E_{\X}$ correspondingly and set $E = \varphi( E_{\X} ) \cup E_{\Y}$. It follows from Corollary \ref{cor:pullback_QC} that $\varphi( E_{\X} )$ is zero-dimensional. Hence $\mm_{\Y}( E \setminus E_{\Y} ) = 0$. By symmetry, $\varphi^{-1}( E )$ is zero-dimensional and $\mm_{\X}( \varphi^{-1}(E) \setminus E_{\X} ) = 0$.

It follows from Proposition \ref{lemm:energydistortion} (2) that
\begin{equation}
	\label{eq:distortioninequality}
	K^{-1}
	\int_{F}
		\rho_{ \mathrm{id}_{\Y} }^{p}
	\,\d\mm_{\Y}
	\leq
	\int_{ \varphi^{-1}(F) }
		\rho_{ \varphi }^{p}
	\,\d\mm_{\X}
	\leq
	K
	\int_{F}
		\rho_{ \mathrm{id}_{\Y} }^{p}
	\,\d\mm_{\Y}
	\quad\text{for every Borel $F \subset \Y$}.
\end{equation}
Indeed, the first inequality follows by applying Proposition \ref{lemm:energydistortion} (2) to $h = \varphi$ and its inverse. The second inequality follows by applying Proposition \ref{lemm:energydistortion} (2) to $h = \mathrm{id}_{\Y}$ and $\varphi$.

We conclude from \eqref{eq:distortioninequality} that $\left\{ \rho_{\varphi} = 0 \right\} \setminus \varphi^{-1}(E)$ and $\varphi^{-1}( E ) \setminus \left\{ \rho_{\varphi} = 0 \right\}$ have zero $\mm_{\X}$-measure. Hence $\1_{ \left\{ \rho_{\varphi} > 0 \right\} }\mm_{\X}$ and $\1_{ \X \setminus \varphi^{-1}(E) }\mm_{\X}$ have the same negligible sets, which is also true for the measures $\1_{ \left\{ \rho_{ \varphi^{-1}} > 0 \right\} }\mm_{\Y}$ and $\1_{ \Y \setminus E }\mm_{\Y}$. With these facts at hand, Proposition \ref{lemm:energydistortion} (3) implies that $\varphi|_{ \X \setminus \varphi^{-1}(E) }$ satisfies Lusin's Condition ($N$) and ($N^{-1}$). Our claim follows by setting $E_1 = E$.
\end{proof}

\begin{lemma}\label{lemma:dimensionpreservation}
Let $\varphi \colon \X \rightarrow \Y$ be a quasiconformal homeomorphism and $E_{1} \subset \Y$ a Borel set satisfying the conclusion of Lemma \ref{lemma:zerodimensionality}, and $k = 0, 1, \dots$, finite. If $E \subset \Y \setminus E_{1}$ is Borel, then $\1_{E}L^{p}( T^{*}\Y )$ is $k$-dimensional if and only if $\1_{ \varphi^{-1}(E) }L^{p}( T^{*}\X )$ is $k$-dimensional.
\end{lemma}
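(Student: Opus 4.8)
The plan is to reduce the statement to the following claim: for $\omega_{1},\dots,\omega_{k}\in L^{p}(T^{*}\Y)$, the family $\omega_{1},\dots,\omega_{k}$ is a basis for $L^{p}(T^{*}\Y)$ in $E$ if and only if $\varphi^{*}\omega_{1},\dots,\varphi^{*}\omega_{k}$ is a basis for $L^{p}(T^{*}\X)$ in $\varphi^{-1}(E)$. Once this is known, $k$-dimensionality transfers in both directions: if $\1_{E}L^{p}(T^{*}\Y)$ is $k$-dimensional, pull a basis back by $\varphi^{*}$; if $\1_{\varphi^{-1}(E)}L^{p}(T^{*}\X)$ is $k$-dimensional, push a basis forward by $(\varphi^{*})^{-1}=(\varphi^{-1})^{*}$, which exists thanks to Corollary \ref{cor:pullback_QC}. (The degenerate case $k=0$ is even more direct, using only $\1_{\varphi^{-1}(E)}^{\mm_{\X}}\varphi^{*}\omega=\varphi^{*}(\1_{E}^{\mm_{\Y}}\omega)$ and the surjectivity of $\varphi^{*}$.)

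First I would record the two algebraic identities
\begin{equation*}
	\varphi^{*}\big(\pi_{\Y}(\bar h)\,\omega\big)=\pi_{\X}(\bar h\circ\varphi)\,\varphi^{*}\omega,\qquad
	\varphi^{*}\big(\1_{F}^{\mm_{\Y}}\omega\big)=\1_{\varphi^{-1}(F)}^{\mm_{\X}}\varphi^{*}\omega,
\end{equation*}
valid for every bounded Borel $\bar h\colon\Y\to\R$, every $F\in\mathscr B(\Y)$, and every $\omega\in L^{p}(T^{*}\Y)$. Both follow from \eqref{eq:def_pullback}, the linearity and continuity of $\varphi^{*}$, and the density in $L^{p}(T^{*}\Y)$ of the $L^{\infty}(\mm_{\Y})$-linear span of $\{\d f:f\in\S^{1,p}(\Y)\}$; the first one after noting that multiplication by a fixed element of $L^{\infty}(\mm_{\X})$ is a bounded operator, the second being the special case $\bar h=\1_{F}$. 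Since $\varphi$ is a homeomorphism, $F\mapsto\varphi^{-1}(F)$ carries Borel partitions of $E$ to Borel partitions of $\varphi^{-1}(E)$; combining the second identity with the bijectivity of $\varphi^{*}$ (Corollary \ref{cor:pullback_QC}) shows that $\varphi^{*}$ restricts to an $\mathbb{R}$-linear bijection of $\1_{E}^{\mm_{\Y}}L^{p}(T^{*}\Y)$ onto $\1_{\varphi^{-1}(E)}^{\mm_{\X}}L^{p}(T^{*}\X)$. All of this remains valid with $\varphi$ replaced by the quasiconformal homeomorphism $\varphi^{-1}$, whose pullback is $(\varphi^{*})^{-1}$.

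Next I would exploit the hypothesis $E\subset\Y\setminus E_{1}$. Because $\varphi$ is a homeomorphism and $\varphi|_{\X\setminus\varphi^{-1}(E_{1})}$ satisfies Lusin's Conditions ($N$) and ($N^{-1}$) (Lemma \ref{lemma:zerodimensionality}), composition with $\varphi$ induces a ring isomorphism $L^{\infty}(\mm_{\Y}|_{E})\xrightarrow{\ \sim\ }L^{\infty}(\mm_{\X}|_{\varphi^{-1}(E)})$, with inverse induced by $\varphi^{-1}$: that every class on $\varphi^{-1}(E)$ is hit uses that $\varphi$ restricts to a Borel isomorphism between $E$ and $\varphi^{-1}(E)$, while well-definedness and injectivity on equivalence classes use Lusin ($N$) and ($N^{-1}$), respectively. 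This is the step I expect to be the crux: it is the only place where the restriction $E\subset\Y\setminus E_{1}$ genuinely enters, and it is exactly what lets one move an arbitrary tuple of $L^{\infty}(\mm_{\X})$-coefficients appearing in a relation over $\varphi^{-1}(E)$ to an $L^{\infty}(\mm_{\Y})$-tuple over $E$, and back.

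Finally I would assemble the biconditional. For the span property, assuming $\omega_{1},\dots,\omega_{k}$ is a basis in $E$: given $\eta\in L^{p}(T^{*}\X)$, put $\omega=(\varphi^{-1})^{*}\eta$, write $\1_{E}^{\mm_{\Y}}\omega=\sum_{i}\1_{E_{i}}^{\mm_{\Y}}f_{i}\omega_{j_{i}}$ along a Borel partition $(E_{i})_{i}$ of $E$, and apply $\varphi^{*}$ termwise (legitimate since $\varphi^{*}$ is bounded, linear, and commutes with the indicators); the two identities give $\1_{\varphi^{-1}(E)}^{\mm_{\X}}\eta=\sum_{i}\1_{\varphi^{-1}(E_{i})}^{\mm_{\X}}\pi_{\X}(\bar f_{i}\circ\varphi)\,\varphi^{*}\omega_{j_{i}}\in\textrm{Span}_{\varphi^{-1}(E)}(\varphi^{*}\omega_{1},\dots,\varphi^{*}\omega_{k})$, since $(\varphi^{-1}(E_{i}))_{i}$ partitions $\varphi^{-1}(E)$; the opposite inclusion follows by the same computation with $\varphi$ and $E$ replaced by $\varphi^{-1}$ and $\varphi^{-1}(E)$. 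For independence: if $\1_{\varphi^{-1}(E)}^{\mm_{\X}}\sum_{i}g_{i}\varphi^{*}\omega_{i}=0$ with $g_{i}\in L^{\infty}(\mm_{\X})$, use the ring isomorphism above to write $\1_{\varphi^{-1}(E)}^{\mm_{\X}}g_{i}=\1_{\varphi^{-1}(E)}^{\mm_{\X}}\pi_{\X}(\bar h_{i}\circ\varphi)$ with $\bar h_{i}$ bounded Borel and supported in $E$, whence $\varphi^{*}\big(\1_{E}^{\mm_{\Y}}\sum_{i}\pi_{\Y}(\bar h_{i})\,\omega_{i}\big)=0$; injectivity of $\varphi^{*}$ and independence of the $\omega_{i}$ in $E$ force $\bar h_{i}=0$ $\mm_{\Y}$-a.e.\ in $E$, and Lusin ($N^{-1}$) then yields $g_{i}=0$ $\mm_{\X}$-a.e.\ in $\varphi^{-1}(E)$; the converse implication is identical, using Lusin ($N$) in the last step. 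This proves the biconditional for bases, and therefore the lemma.
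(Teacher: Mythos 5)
Your proposal is correct and takes essentially the same route as the paper: the paper's proof simply asserts that, thanks to Corollary \ref{cor:pullback_QC} and the Lusin conditions from Lemma \ref{lemma:zerodimensionality}, $\varphi^{*}$ restricted to $\1_{E}L^{p}(T^{*}\Y)$ is a bijection onto $\1_{\varphi^{-1}(E)}L^{p}(T^{*}\X)$ preserving independence and spans, which is exactly what you verify in detail via the identities $\varphi^{*}(h\,\omega)=\pi_{\X}(\bar h\circ\varphi)\,\varphi^{*}\omega$ and the transfer of $L^{\infty}$-coefficients. One cosmetic slip: for the map $h\mapsto h\circ\varphi$, well-definedness on equivalence classes uses Lusin's Condition ($N^{-1}$) and injectivity uses ($N$), the reverse of what you state in the ring-isomorphism paragraph (your later usage in the independence argument has it right), and since both conditions are available this does not affect the proof.
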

\begin{proof}
Let $E_{1} \subset \Y$ be a Borel set satisfying the conclusions of Lemma \ref{lemma:zerodimensionality}. Since $\varphi|_{ \X \setminus \varphi^{-1}(E_1) }$ satisfies Lusin's Condition ($N$) and ($N^{-1}$), it is immediate from Corollary \ref{cor:pullback_QC} and the definition of $\varphi^{*}$ that $\varphi^{*}$ restricted to $\1_{ E \setminus E_1 }L^{p}( T^{*}\Y )$ maps onto $\1_{ \varphi^{-1}( E \setminus E_1 ) }L^{p}( T^{*}\X )$, while preserving the linear independence and spans. This yields the claim.
\end{proof}
\begin{proof}[Proof of Theorem \ref{thm:qc_vs_dimension}]
Let $( D_{i} )_{ i = 0 }^{ \infty }$ be a dimensional decomposition of $\Y$. Let $E_{1} \subset \Y$ be a Borel set satisfying the conclusions of Lemma \ref{lemma:zerodimensionality}, and denote $E_{0} = E_{1} \cup D_{0}$. Here $E_{0}$ also satisfies the conclusions of Lemma \ref{lemma:zerodimensionality} and $E_{0} \supset D_{0}$. Lemma \ref{lemma:dimensionpreservation} yields that every Borel $F_{i}' \subset D_i' = \varphi^{-1}( D_i \setminus E_0 )$ with $\mm_{\X}( F_{i}' ) > 0$ is $i$-dimensional. It follows from Lemma \ref{lemma:zerodimensionality} that $D_{0}' = \varphi^{-1}( E_0 )$ is zero-dimensional. The claim follows from these properties.
\end{proof}
\subsection{Sufficient conditions for quasiconformality}
\label{ss:suf_QC}
In this subsection we address this problem: under which
additional conditions a given map \(\varphi\) with bounded
outer dilatation is quasiconformal? We show that the
invertibility of the pullback map \(\varphi^*\) is a necessary
but not a sufficient condition. The additional requirement one
needs is that pre-composing with \(\varphi^{-1}\) preserves
Sobolev regularity of Lipschitz maps.
\begin{theorem}\label{thm:equiv_QC}
Let \((\X,\sfd_\X,\mm_\X)\) and \((\Y,\sfd_\Y,\mm_\Y)\) be metric
measure spaces. Let \(\varphi\colon\X\to\Y\) be a homeomorphism having
bounded outer dilatation. Then the following conditions are equivalent:
\begin{itemize}
\item[\(\rm i)\)] \(\varphi\) has bounded inner dilatation (and thus
\(\varphi\) is quasiconformal),
\item[\(\rm ii)\)] \(\varphi^*\colon L^p(T^*\Y)\to L^p(T^*\X)\)
is invertible and \(f\circ\varphi^{-1}\in{\rm D}^{1,p}(\Y)\)
for every \(f\in{\rm LIP}_{bs}(\X)\).
\end{itemize}
Moreover, if \(\rm i)\) and \(\rm ii)\) hold, then
\(K_I(\varphi)=C^p\), where \(C\) stands for the operator
norm of \((\varphi^*)^{-1}\).
\end{theorem}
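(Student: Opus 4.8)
The direction $\mathrm{i})\Rightarrow\mathrm{ii})$ is the easy one: if $\varphi$ is quasiconformal, then $\psi\coloneqq\varphi^{-1}$ is a continuous map with bounded outer dilatation, so Corollary \ref{cor:pullback_QC} shows that $\varphi^*$ is an isomorphism (its inverse being $\psi^*$), while Proposition \ref{lemm:energydistortion} (2) applied to $\psi$ (with $\Omega=\Y$, target $\R$) gives $f\circ\varphi^{-1}=f\circ\psi\in\D^{1,p}(\Y)$ for every $f\in\LIP_{bs}(\X)\subset\D^{1,p}(\X)$.

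For $\mathrm{ii})\Rightarrow\mathrm{i})$ I would set $\psi\coloneqq\varphi^{-1}$ and $T\coloneqq(\varphi^*)^{-1}$, with $C\coloneqq\|T\|$. Since a path family $\Gamma$ in $\X$ and the family $\varphi\Gamma$ in $\Y$ satisfy $\psi(\varphi\Gamma)=\Gamma$ (and symmetrically), one has $K_I(\varphi)=K_O(\psi)$, so the goal is to prove that $\psi$ has bounded outer dilatation with $K_O(\psi)\le C^p$; by Proposition \ref{lemm:energydistortion} this amounts to establishing (a) $\psi\in\D^{1,p}_\loc(\Y,\X)$ and (b) $\int_\Y\1_E\circ\psi\,\rho_\psi^p\,\d\mm_\Y\le C^p\mm_\X(E)$ for every Borel $E\subset\X$. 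The key preliminary remark is that for $f\in\LIP_{bs}(\X)$ the hypothesis gives $f\circ\psi\in\D^{1,p}(\Y)$, and since $(f\circ\psi)\circ\varphi=f$ holds pointwise ($\varphi$ being a homeomorphism), Theorem \ref{thm:pullback_map} yields $\varphi^*\big(\d(f\circ\psi)\big)=\d\big((f\circ\psi)\circ\varphi\big)=\d f$, hence $\d(f\circ\psi)=T(\d f)$ and therefore the uniform energy bound $\int_\Y\rho_{f\circ\psi}^p\,\d\mm_\Y=\|T(\d f)\|^p_{L^p(T^*\Y)}\le C^p\int_\X|Df|^p\,\d\mm_\X$.

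To prove (a) I would feed this into the criterion of Proposition \ref{prop:equiv_wug}, using the distance cut-offs $\eta_{n,k}$ of \eqref{eq:def_good_cut-off} based at a dense sequence of $\X$ (these lie in $\LIP_{bs}(\X)$, so $\eta_{n,k}\circ\psi\in\D^{1,p}(\Y)$): on an open neighbourhood $V=\varphi(B_r(x_0))$ of an arbitrary point, where $\psi(V)$ is bounded and the $\eta$'s restrict to honest distance functions, one reconstructs a weak upper gradient of $\psi|_V$ by combining the union of the countably many $\Mod_p$-null exceptional families with the isometric (Kuratowski) description of the bounded target, obtaining $\psi|_V\in\D^{1,p}(V,\X)$ and hence $\psi\in\D^{1,p}_\loc(\Y,\X)$ through Lemma \ref{lemm:locality:dirichlet}. \textbf{This is where the main obstacle lies}, together with step (b): one must control a \emph{single} weak upper gradient of $\psi|_V$ in $L^p$, which is exactly what the uniform energy bound above is for (it forces the family $\{\rho_{\eta_{n,k}\circ\psi}\}$ to be bounded in $L^p$ on bounded pieces of $\Y$); and in (b) one has to pass from that integral bound to the pointwise push-forward inequality for $\rho_\psi$. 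For (b) I would use that $\rho_\psi=\sup_{n,k}\rho_{\eta_{n,k}\circ\psi}$ $\mm_\Y$-a.e.\ (again via Proposition \ref{prop:equiv_wug} and locality of the minimal weak upper gradient), test the uniform energy bound against boundedly supported Lipschitz bumps adapted to small balls covering $E$, and treat the degenerate set $\{|D\varphi|=0\}$ — equivalently the $0$-dimensional part of $L^p(T^*\X)$, on which all differentials vanish — separately. Granted (a) and (b), Proposition \ref{lemm:energydistortion} gives $K_O(\psi)\le C^p<\infty$, so $\psi$ has bounded outer dilatation, $\varphi$ is quasiconformal, and $K_I(\varphi)=K_O(\psi)\le C^p$.

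It then remains to prove $C^p\le K_I(\varphi)$. Since $\varphi$ is now known to be quasiconformal, Corollary \ref{cor:pullback_QC} identifies $(\varphi^*)^{-1}=T$ with the pullback $\psi^*$ of $\psi=\varphi^{-1}$, and \eqref{eq:int_ineq_pullback} applied to the map $\psi$ gives $\|T\omega\|_{L^p(T^*\Y)}=\|\psi^*\omega\|_{L^p(T^*\Y)}\le K_O(\psi)^{1/p}\|\omega\|_{L^p(T^*\X)}=K_I(\varphi)^{1/p}\|\omega\|_{L^p(T^*\X)}$ for all $\omega\in L^p(T^*\X)$, whence $C=\|T\|\le K_I(\varphi)^{1/p}$, i.e.\ $C^p\le K_I(\varphi)$. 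Combined with the reverse bound from the previous step, this gives $K_I(\varphi)=C^p$.
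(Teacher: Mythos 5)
Your overall reduction is the same as the paper's: the easy direction via Corollary \ref{cor:pullback_QC} and Proposition \ref{lemm:energydistortion}, and, for ii) \(\Rightarrow\) i), the identity \(\varphi^*\big(\d(f\circ\varphi^{-1})\big)=\d f\), hence \(\d(f\circ\varphi^{-1})=(\varphi^*)^{-1}(\d f)\), applied to the cut-offs \(\eta_{n,k}\in\LIP_{bs}(\X)\) based at a dense sequence of \(\X\), combined with Proposition \ref{prop:equiv_wug} and the characterization of bounded outer dilatation in Proposition \ref{lemm:energydistortion} (3); the final identification \(K_I(\varphi)=C^p\) is also handled as in the paper. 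However, exactly at the point you flag as the main obstacle there is a genuine gap, and the substitute you sketch would not work. What has to be proved is that the \emph{single} function \(g\coloneqq\sup_{n,k}\rho_{\eta_{n,k}\circ\varphi^{-1}}\) belongs to \(\mathcal L^p_\loc(\mm_\Y)\) and satisfies \(\int_B g^p\,\d\mm_\Y\leq C^p\,\mm_\X(\varphi^{-1}(B))\) for every Borel \(B\subset\Y\); only then does Proposition \ref{prop:equiv_wug} give \(\varphi^{-1}\in\D^{1,p}_\loc(\Y,\X)\) with \(\rho_{\varphi^{-1}}\leq g\), and only then does your step (b) follow. Your global energy bound \(\int_\Y\rho_{f\circ\varphi^{-1}}^p\,\d\mm_\Y\leq C^p\int_\X|Df|^p\,\d\mm_\X\), i.e.\ boundedness of the family \(\{\rho_{\eta_{n,k}\circ\varphi^{-1}}\}\) in \(L^p\) on bounded pieces, does not control the pointwise supremum (a family with uniformly bounded \(L^p\)-norms can have a supremum failing to lie even in \(L^p_\loc\)), and multiplying the \(\eta_{n,k}\) by Lipschitz bumps does not localize the differential because of the Leibniz terms, so ``testing against bumps adapted to small balls'' does not produce the push-forward inequality for \(\rho_{\varphi^{-1}}\).

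The missing argument is short once you exploit the \(L^\infty\)-module compatibility of the pullback, which is implicit in what you wrote: since \(\varphi^*(\1_B^{\mm_\Y}\omega)=\1_{\varphi^{-1}(B)}^{\mm_\X}\varphi^*\omega\), one gets \(T(\1_{\varphi^{-1}(B)}^{\mm_\X}\d f)=\1_B^{\mm_\Y}\d(f\circ\varphi^{-1})\) and hence the \emph{localized} inequality \(\|\1_B^{\mm_\Y}\d(\eta_{n,k}\circ\varphi^{-1})\|_{L^p(T^*\Y)}\leq C\,\|\1_{\varphi^{-1}(B)}^{\mm_\X}\d\eta_{n,k}\|_{L^p(T^*\X)}\). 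For the finite maxima \(g_m=\sup_{n,k\leq m}|\d(\eta_{n,k}\circ\varphi^{-1})|\) pick a Borel partition \((G^m_{n,k})_{n,k\leq m}\) of \(\Y\) on which the maximum is attained, apply the localized inequality on each \(G^m_{n,k}\cap B\), and use \(|\d\eta_{n,k}|\leq 1\): since the sets \(\varphi^{-1}(G^m_{n,k}\cap B)\) partition \(\varphi^{-1}(B)\), the right-hand sides sum to \(C^p\,\mm_\X(\varphi^{-1}(B))\), uniformly in \(m\), and monotone convergence yields the bound for \(g\). Local finiteness of \(\varphi_*\mm_\X\) then gives \(g\in\mathcal L^p_\loc(\mm_\Y)\), which is exactly what your application of Proposition \ref{prop:equiv_wug} requires, and the inequality \(\rho_{\varphi^{-1}}\leq g\) delivers (b) directly; no separate treatment of the degenerate set \(\{|D\varphi|=0\}\), nor any Kuratowski-embedding argument, is needed. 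With this step inserted, your outline coincides with the paper's proof.
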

\begin{proof}
\ \\
{\color{blue}\({\rm i)}\Rightarrow{\rm ii)}\)} This follows immediately from Corollary \ref{cor:pullback_QC} and
Proposition \ref{lemm:energydistortion}.\\
{\color{blue}\({\rm ii)}\Rightarrow{\rm i)}\)} Thanks to the Bounded
Inverse Theorem, we know that there exists a constant \(C>0\) such that
\(\|\varphi^*\omega\|_{L^p(T^*\X)}\geq C^{-1/p}\|\omega\|_{L^p(T^*\Y)}\)
for every \(\omega\in L^p(T^*\Y)\). In particular, it holds that
\begin{equation}\label{eq:invertibility_aux1}
\big\|\1_B^{\mm_Y}\d(f\circ\varphi^{-1})\big\|_{L^p(T^*\Y)}\leq
C^{1/p}\big\|\1_{\varphi^{-1}(B)}^{\mm_\X}\d f\big\|_{L^p(T^*\X)},
\quad\forall f\in{\rm LIP}_{bs}(\X),\,B\in\mathscr B(\Y).
\end{equation}
Fix a minimal weak upper gradient \(\rho_\varphi\) of \(\varphi\).
Define \(E\coloneqq\{\rho_\varphi>0\}\in\mathscr B(\X)\)
and \(F\coloneqq\varphi(E)\in\mathscr B(\Y)\).
Fix any \((y_n)_n\subset\Y\) dense and denote
\(\eta_{n,k}\coloneqq\eta_{y_n,k}\in{\rm LIP}_{bs}(\Y)\) for all
\(n,k\in\N\), where \(\eta_{y_n,k}\) is given by
\eqref{eq:def_good_cut-off}. Define
\(g_m\coloneqq\sup_{n,k\leq m}\big|\d(\eta_{n,k}
\circ\varphi^{-1})\big|\) for every \(m\in\N\) and
\(g\coloneqq\sup_{n,k\in\N}\big|\d(\eta_{n,k}\circ\varphi^{-1})\big|\).
We can find Borel partitions \((G^m_{n,k})_{n,k\leq m}\) of \(\Y\)
such that \(g_m=\sum_{n,k\leq m}\1_{G^m_{n,k}}^{\mm_\Y}\big|
\d(\eta_{n,k}\circ\varphi^{-1})\big|\). Then
\[\begin{split}
\int_B g_m^p\,\d\mm_\Y&\overset{\phantom{(\star)}}=
\sum_{n,k\leq m}\big\|\1_{G^m_{n,k}\cap B}^{\mm_\Y}
\,\d(\eta_{n,k}\circ\varphi^{-1})\big\|_{L^p(T^*\Y)}^p
\overset{\eqref{eq:invertibility_aux1}}\leq
C\sum_{n,k\leq m}\big\|\1_{\varphi^{-1}(G^m_{n,k}\cap B)}^{\mm_\X}
\,\d\eta_{n,k}\big\|_{L^p(T^*\X)}^p\\
&\overset{(\star)}\leq C\sum_{n,k\leq m}
\mm_\X\big(\varphi^{-1}(G^m_{n,k})\cap\varphi^{-1}(B)\big)
=C\,\varphi_*\mm_\X(B),\quad\text{ for every }B\in\mathscr B(\Y),
\end{split}\]
where the starred inequality follows from the fact that each
function \(\eta_{n,k}\) is \(1\)-Lipschitz. Given that
\(g_m\nearrow g\) in the \(\mm_\Y\)-a.e.\ sense, an application
of the Monotone Convergence Theorem yields
\begin{equation}\label{eq:invertibility_aux2}
\int_B g^p\,\d\mm_\Y\leq C\,\varphi_*\mm_\X(B)= C \mm_{\X}( \varphi^{-1}(B) )
\quad\text{ for every }B\in\mathscr B(\Y).
\end{equation}
Since \(\mm_\X\) is boundedly finite and \(\varphi^{-1}\) is
continuous, we have that \(\varphi_*\mm_\X\) is locally finite,
thus \eqref{eq:invertibility_aux2} ensures that
\(g\in\mathcal L^p_{\rm loc}(\mm_\Y)\). Hence, we are in a position
to apply Proposition \ref{prop:equiv_wug}: any Borel representative
\(\bar g\) of \(g\) is a weak upper gradient of each
function \(\eta_{n,k}\circ\varphi\), thus
Proposition \ref{prop:equiv_wug} implies that \(\bar g\) is a weak
upper gradient of \(\varphi^{-1}\) and accordingly
\(\varphi^{-1}\in{\rm D}^{1,p}_{\rm loc}(\Y,\X)\).
We have $\rho_{\varphi^{-1}} \leq g$ $\mm_{\Y}$-almost everywhere hence \eqref{eq:invertibility_aux2} implies that $\varphi^{-1}$ satisfies Proposition \ref{lemm:energydistortion} (3) for the constant $K = C$. Hence $K_{O}( \varphi^{-1} ) = K_{I}( \varphi ) \leq C$, where $C^{1/p}$ is the operator norm of $( \varphi^{*} )^{-1}$. The inequality $C \leq K_{O}( \varphi^{-1} ) = K_{I}( \varphi )$ follows from \eqref{eq:int_ineq_pullback} since $( \varphi^{*} )^{-1} = ( \varphi^{-1} )^{*}$. The equality $C = K_{I}( \varphi )$ follows, thereby proving the claim.
\end{proof}
\begin{remark}\label{rem:failure}{\rm
In item ii) of Theorem \ref{thm:equiv_QC}, the requirement that
\(f\circ\varphi^{-1}\in\D^{1,p}(\Y)\) if \(f\in\LIP_{bs}(\X)\)
cannot be disposed of, as \cite[Example 6.1]{Iko:Rom:ar} shows: First, denote $\X \coloneqq ( \mathbb{R}^{2}, \| \cdot \|_{2}, \mathcal{L}^{2} )$. The example of \cite{Iko:Rom:ar} consists of
a metric measure space $\Y \coloneqq ( \Y, \sfd_{\Y}, \mathcal{H}^{2}_{\Y} )$, with $( \Y, \sfd_{\Y} )$ geodesic and $\mathcal{H}^{2}_{\Y}$ the \(2\)-dimensional
Hausdorff measure, for which there exists a $1$-Lipschitz homeomorphism
$\varphi \colon \X \rightarrow \Y$ that satisfies
$K_{O}( \varphi ) = 1$ (for $p = 2$), yet $\varphi$ is not
quasiconformal. The mapping $\varphi$ has the further property that
there exists a Cantor set $E \subset\mathbb{R}\times\left\{0\right\}$
of positive length, $\varphi|_{ \X \setminus E }$
is a local isometry, and $\varphi(E)$ has zero $1$-dimensional
Hausdorff measure. If a given set $V \subset \Y \setminus \varphi(E)$
is open and compactly contained, then the restriction
$\varphi^{*}|_{ \1_{V} L^{2}( T^{*}\Y ) }$ is an isometric
isomorphism mapping onto $\1_{\varphi^{-1}(V)}L^{2}( T^{*}X )$. Since $E$ and $\varphi( E )$ have negligible measure,
this implies that the pullback operator $\varphi^{*}$ is an isometric
isomorphism.

Theorem \ref{thm:equiv_QC} implies that
there exists some function $f \in \LIP_{bs}( \X )$ for which
$f \circ \varphi^{-1} \not\in\D^{1,p}( \Y )$. It is not difficult to see that the projection $f_2 \colon \mathbb{R}^{2} \rightarrow \mathbb{R}$, $ (x,y) \mapsto y$ is such that $f_2 \circ \varphi^{-1}$ is $1$-Lipschitz. With this observation one can deduce that $f_{1} \circ \varphi^{-1} \not\in \D^{1,2}_{\loc}(\Y)$, where $f_{1}(x,y) = x$. By multiplying $f_1$ with a smooth bump function equal to one in a neighbourhood of $E$, we obtain an example of $f \in \LIP_{bs}( \X )$ for which
$f \circ \varphi^{-1} \not\in\D^{1,p}( \Y )$.
\fr}\end{remark}
\begin{remark}{\rm
In some situations, the distortion inequality $K_{O}( \varphi ) \leq K$ for a homeomorphism $\varphi \colon \X \rightarrow \Y$ does imply the corresponding inequality for the inverse. For example, if the measures on $\mm_{\X}$ and $\mm_{\Y}$ are such that balls of radius $r$ have measure comparable to $r^{p}$ and the spaces $\X$ and $\Y$ support a local $( 1, p )$-Poincaré inequality, then $K_{I}( \varphi ) \leq K'$; see \cite[Section 9]{HKST:01}. Similar conclusion holds also in the setting of two-dimensional metric surfaces admitting quasiconformal parametrizations; see \cite[Proposition 12.2]{Raj:17}.

We note that the space $\Y$ appearing in Remark \ref{rem:failure} (the one from \cite[Example 6.1]{Iko:Rom:ar}) does not admit a quasiconformal parametrization by any Riemannian surface, and such quasiconformal parametrizations of $\Y$ do not exist even locally (due to \cite[Theorem 1.2]{Iko:21:unif}). Theorem \ref{thm:equiv_QC} is closely related to some open questions regarding metric measure spaces admitting a quasiconformal parametrization by a Riemannian surface; see \cite[Question 1.6]{CR:20} and \cite[Questions 7.3 and 7.5]{Iko:21}.
\fr}\end{remark}
\end{document}